\numberwithin{equation}{section}
\newtheorem{teo}{Theorem}[section]
\newtheorem{prop}[teo]{Proposition}
\newtheorem*{problem*}{Problem}
\theoremstyle{definition} 
\theoremstyle{remark}
\newtheorem{remark}[teo]{Remark}
\newtheorem*{notation}{\textbf{Notation}}
\newcommand{\st}{\left(}
\newcommand{\dt}{\right)}
\newcommand{\sq}{\left[}
\newcommand{\dq}{\right]}
\newcommand{\sg}{\left\{}
\newcommand{\dg}{\right\}}
\newcommand{\p}{  {\bf p} }
\newcommand{\q}{ {\bf q} }
\newcommand{\hg}{ {\bf h} }
\newcommand{\kg}{  {\bf k} }
\newcommand{\ag}{ {\bf a}}
\newcommand{\bg}{  {\bf b}}
\newcommand{\cg}{{\bf c}}
\newcommand{\aaa}{{\bf a}}
\newcommand{\bbb}{{\bf b}}
\newcommand{\ccc}{{\bf c}}
\newcommand{\GGG}{ \mathbf G}
\newcommand{\etag}{\boldsymbol{\eta}}
\newcommand{\dd}{\partial\overline{\partial}}
\newcommand{\NN}{\mathbb{N}}
\newcommand{\s}{\textbf{S}}
\newcommand{\vol}{\text{Vol}_{\omega}}
\newcommand{\RR}{\mathbb{R}}
\newcommand{\ZZ}{\mathbb{Z}}
\newcommand{\CC}{\mathbb{C}}
\newcommand{\Sp}{\mathbb{S}}
\newcommand{\dombd}{\mathcal{B}\st\kappa, \delta \dt}
\newcommand{\K}{K\"{a}hler}
\newcommand{\aaaa}{\boldsymbol{\alpha}}
\newcommand{\bbbb}{\boldsymbol{\beta}}
\newcommand{\cccc}{\boldsymbol{\gamma}}
\newcommand{\hko}{{\bf H}_{\hg,\kg}^{out}}
\newcommand{\Lg}{\mathbb{L}_{\omega}}
\newcommand{\Cqdd}{C_{\delta}^{4,\alpha}\st M_{\p} \dt \oplus \mathcal{D}_{\p}\st \bg,\cg \dt}
\newcommand{\Cqddd}{C_{\delta}^{4,\alpha}\st M_{\p} \dt \oplus \mathcal{E}_{\p} \oplus \mathcal{D}_{\p}\st \bg,\cg \dt}
\newcommand{\Cqddq}{C_{\delta}^{4,\alpha}\st M_{\p,\q} \dt \oplus \mathcal{D}_{\p}\st \bg,\cg \dt\oplus\mathcal{D}_{\q}\st \ag \dt}
\newcommand{\Cqdddq}{C_{\delta}^{4,\alpha}\st M_{\p,\q} \dt \oplus \mathcal{E}_{\p} \oplus \mathcal{D}_{\p}\st \bg,\cg \dt\oplus\mathcal{E}_{\q}\oplus \mathcal{D}_{\q}\st \ag \dt}
\newcommand{\Cqdda}{C_{\delta}^{4,\alpha}\st M_{\p,\q} \dt \oplus \mathcal{D}_{\q}\st \bar{\ag} \dt}
\newcommand{\Pbe}{{\bf P}_{\bg,\etag}}
\newcommand{\rep}{r_\varepsilon}
\newcommand{\Rep}{R_\varepsilon}
\newcommand{\hkjj}{H_{\tilde{h},\tilde{k}}^{in}}
\newcommand{\hkii}{{\bf H}_{\tilde{h},\tilde{k}}^{in}}
\newcommand{\Cdx}{C_{\delta}^{4,\alpha}\st X_{\Gamma} \dt }
\newcommand{\etat}{\eta_{\tilde{b},\tilde{h},\tilde{k}}}
\newcommand{\csfii}{f_{\tilde{b},\tilde{h},\tilde{k}}^{in}}
\newcommand{\ega}{e(\Gamma)}
\newcommand{\cga}{c(\Gamma)}
\newcommand{\egaj}{e\st \Gamma_{j}\dt}
\newcommand{\egal}{e\st \Gamma_{N+l}\dt}
\newcommand{\cgaj}{c\st\Gamma_{j}\dt}
\newcommand{\Pbg}{{\bf P}_{\tilde{b},\omega}}
\begin{document}

\title{On the resolution of extremal and constant scalar curvature K\"ahler orbifolds}
\author[Claudio Arezzo] {Claudio Arezzo}
\address{ICTP Trieste and Univ. of Parma, arezzo@ictp.it }
\author{Riccardo Lena}
\address{Univ. of Parma,  riccardo.lena@unipr.it}
\author{Lorenzo Mazzieri}
\address{Scuola Normale Superiore, Pisa, l.mazzieri@sns.it}

\maketitle

\vspace{-,15in}

{\it{1991 Math. Subject Classification:}} 58E11, 32C17.

\section{Introduction}

The aim of this paper is to give a complete answer  to the following question.

\begin{problem*}\label{problema}
Let $\st M,g,\omega \dt$ be a compact orbifold of complex dimension $m$ endowed with an extremal metric $g$, having isolated quotient singularities at points $\{x_j\}_{j=1,..., S}$ and orbifold groups $\Gamma_{j}\triangleleft U(m)$.
Assume in addition that the local models for the  singularities $\CC ^m / \Gamma_j$ admit scalar flat ALE K\"ahler resolutions $\big( X_{\Gamma_{j}} ,h_{j},\eta_{j} \big)$, then we address the following questions:
\begin{itemize}
\item[(i)] Is there an extremal global resolution $\big( \tilde{M}, \tilde{g},\tilde{\omega} \big)$ of the above extremal K\"ahler orbifold, in which suitable neighborhood of the singular points $\{x_j\}_{j=1,..., S}$ are replaced by suitable pieces of the model spaces $\{X_{\Gamma_{j}}\}_{j=1,..., S}$ for the local resolutions?
\smallskip
\item[(ii)]If $g$ is a \K\ constant scalar curvature (Kcsc from now on) metric, when is the solution of the above problem Kcsc as well?
\end{itemize}
\end{problem*}

The above problem, as well as its analogous when one performs blow ups of smooth points, has been the focus of extensive research and it is now well known that the main difficulties arise in the presence of nontrivial holomorphic vector fields, so when 
$H^{0}\st M, TM \dt\neq\sg 0\dg$. 
 It is now well understood that in the Kcsc case their presence forces the points to be in a {\em special symplectic configuration}, in order to get a positive answer to (ii). In other words, the points $\{x_j\}_{j=1,..., S}$ must satisfy a {\em balancing condition} with respect to an $L^2$-orthonormal basis of the symplectic potentials $\{\varphi_i\}_{i=1,...,d}$ of the holomorphic vector fields.
On the other hand, from the works of LeBrun-Simanca \cite{ls}, Arezzo-Pacard-Singer \cite{aps} and Sz\'ekelyhidi \cite{Gabor1}, one expects the extremal problem (i) to be unobstructed. Another evidence in favour of this guess for the extremal problem comes from Tipler's solution to the above question for surfaces with cyclic quotient singularities \cite{Tip}. 

In the extremal case, the key difficulty lies in fact at the beginning, since one needs to construct \K\ potentials for the lift of he holomorphic vector fields on the local models. In the known blow up case, this had been observed in \cite[Proposition 7.3]{aps}  , while Tipler observed this fact in his special case. We prove this crucial fact in complete generality in Proposition \ref{potenzialiHamiltoniani}, and this paves the way to checking that the standard 
gluing procedure works also in this case, finally confirming in complete generality the unobstructedness of the extremal problem. This is shown in Section \ref{matching}.

If $g$ is an extremal metric and $X_{s}$ its extremal vector field, we denote with  $G:=Iso_{0}\st M,g \dt\cap Ham\st M,\omega \dt$   the identity component of the group of Hamiltonian isometries and with $\mathfrak{g}$ its Lie algebra. Moreover we denote with $T\subset G$ the maximal torus whose Lie algebra $\mathfrak{t}$ contains the extremal vector field $X_{s}$ and $\tilde{T}$ its lift to the resolution. 
With these notations, our main result in the extremal case reads:

\begin{teo}\label{maintheoremestremalefacile}
Let $\st M,g,\omega\dt$ be a compact extremal orbifold with  $T$-invariant metric $g$ and singular points $\{x_1, \dots, x_S\}$. Then  there is $\bar{\varepsilon}$ such that for every $\varepsilon \in (0,\bar{\varepsilon})$  the resolution
\[
\tilde{M} : = M \sqcup _{{x_{1}, \varepsilon}} X_{\Gamma_1} \sqcup_{{x_{2},\varepsilon}} \dots
\sqcup _{{x_S, \varepsilon}} X_{\Gamma_S}
\]

\noindent has a $\tilde{T}$-invariant extremal K\"ahler metric.
\end{teo}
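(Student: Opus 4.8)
The plan is to run the $T$-equivariant version of the Arezzo--Pacard--Singer gluing scheme \cite{aps}: produce a family of approximately extremal metrics $\omega_\varepsilon$ on $\tilde M$, invert the Lichnerowicz operator uniformly in $\varepsilon$ on weighted spaces, and solve the exact equation by a fixed-point argument, using Proposition \ref{potenzialiHamiltoniani} to supply the one ingredient that is not classically available.

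\emph{Approximate solution.} First I would fix $T$-invariant complex normal coordinates centred at each $x_j$ in which $\omega$ osculates to high order the flat K\"ahler cone on $\CC^m/\Gamma_j$; by the ALE hypothesis the model metric $\eta_j$ is asymptotic, at the prescribed rate, to the same cone. Excising $B(x_j, r_\varepsilon)$ from $M$ and gluing in the rescaled resolution carrying $\varepsilon^2\eta_j$, one interpolates the two K\"ahler potentials across an annular neck with a radial cutoff. The point is that this can be done $\tilde T$-equivariantly: $g$ is $T$-invariant, and Proposition \ref{potenzialiHamiltoniani} furnishes a genuine K\"ahler potential for the lift of each field of $\mathfrak t$ on $X_{\Gamma_j}$, so that the moment maps can be matched along the neck. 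This yields a $\tilde T$-invariant K\"ahler form $\omega_\varepsilon$, and a weighted Schauder estimate bounds the deviation of $s(\omega_\varepsilon)$ from being a Killing potential by a controlled positive power of $\varepsilon$.

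\emph{Linear theory and reduction.} The linearisation of the extremal equation is the Lichnerowicz operator $\Lg=\mathcal D^*\mathcal D$, acting on $\tilde T$-invariant functions, whose kernel is the finite-dimensional space of $\tilde T$-invariant holomorphy potentials. I would establish weighted Schauder estimates on each piece (the punctured orbifold and each scaled $X_{\Gamma_j}$), patch the model right inverses, and obtain a right inverse for $\Lg$ modulo this deficiency space, with a norm that blows up only at a controlled rate as $\varepsilon\to 0$. Here Proposition \ref{potenzialiHamiltoniani} enters decisively as well: it identifies the approximate cokernel contributed by the ALE factors with honest Hamiltonians, so the local and global holomorphy potentials line up across the neck and the deficiency spaces remain stable under the degeneration.

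\emph{Closing the argument.} Writing the desired metric as $\omega_\varepsilon+i\dd\phi$, projecting the equation $\mathcal D\, s(\omega_\varepsilon+i\dd\phi)=0$ off the cokernel, and inserting the uniform right inverse, a contraction-mapping argument in a small ball of the weighted space solves the infinite-dimensional part, the quadratic estimates on the nonlinearity absorbing the error power from the first step. The residual obstruction lies in the finite-dimensional space of holomorphy potentials and is removed by letting the extremal vector field vary within $\tilde{\mathfrak t}$: since $X_s$ is pinned by the complex structure and the K\"ahler class while the pairing of the moment map with $\tilde{\mathfrak t}$ is $T$-equivariantly nondegenerate, the finite-dimensional obstruction map is onto for small $\varepsilon$. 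This extra freedom is exactly what makes the problem unobstructed, in contrast to the Kcsc case where a balancing condition on $\{x_j\}$ would be forced. I expect the main obstacle to be the uniform invertibility of $\Lg$ across the neck: the norms of the model inverses degenerate as $\varepsilon\to 0$, and one must track this degeneration precisely against the decay rate of the gluing error to keep the fixed-point scheme within its basin.
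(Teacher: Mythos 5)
Your proposal is correct in outline, but it follows a genuinely different route from the paper. What you describe -- one global approximately extremal metric $\omega_\varepsilon$ on $\tilde M$, a uniform right inverse for the linearised operator modulo holomorphy potentials, and a single contraction-mapping argument in which the finite-dimensional obstruction is absorbed by letting $(X,c)$ vary in $\tilde{\mathfrak t}\times\RR$ -- is essentially Sz\'ekelyhidi's scheme from \cite{Gabor1} (note that \cite{aps}, which you invoke as your template, does not actually proceed this way). The paper instead follows the Cauchy-data-matching strategy of \cite[Section 10]{aps}: it never inverts a linear operator on the whole of $\tilde M$. Rather, it constructs two separate \emph{families of exact} extremal metrics, one on the truncated base $M_{r_\varepsilon}$ (Proposition \ref{famigliabase}, resting on the Fredholm theory of Proposition \ref{linearizzatoestremalebase}) and one on each truncated model $X_{\Gamma_j,R_\varepsilon/a}$ (Proposition \ref{famigliamodello}, resting on Proposition \ref{isomorfismopesati}), each parametrised by pseudo-boundary data $(\hg,\kg)$, and then matches boundary values and derivatives of the two families along the neck by an iteration in the space of boundary data; this is the content of Theorem \ref{maintheoremestremale}. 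The trade-off is real: your route buys conceptual economy (one equation, one fixed point) at the price of the hardest analytic step, namely uniform invertibility of the Lichnerowicz operator across the degenerating neck together with stability of the deficiency spaces, which you yourself flag as the main obstacle; the paper's route replaces that global inversion by two clean linear theories on the separate pieces, shifting the smallness requirement onto the boundary data and the matching iteration. Both routes hinge on the same new ingredient, which you correctly single out: Proposition \ref{potenzialiHamiltoniani}, without which the extremal equation on the ALE models cannot even be formulated (no moment map $\mu_{\eta}$ for the lifted fields), and with which the local and global Hamiltonian potentials can be compared along the neck. One caveat on your obstruction step: the claim that the finite-dimensional obstruction map stays onto as $\varepsilon\to 0$ needs the Gram matrix $\int \langle\mu_{\omega_\varepsilon},X_i\rangle\langle\mu_{\omega_\varepsilon},X_j\rangle\,d\mu$ to be uniformly nondegenerate, which holds because the glued region has volume $\mathcal O(r_\varepsilon^{2m})$, and it requires working $T$-equivariantly with $T$ a \emph{maximal} torus containing $X_s$ (so that the relevant kernel is exactly the potentials of $\mathfrak t$, by the centraliser statement in Section \ref{preliminaries}); you use this implicitly but should state it, since for a non-maximal torus the cokernel would be strictly larger than the space swept out by your free parameters.
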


Once we have refined our notations and distinguished different types of singular points as below in terms of the asymptotics of the local models, we can give a more precise statement concerning the \K\ class represented by the new extremal metric (see Theorem \ref{maintheoremestremale}). At this point we just underline the fact that the resulting \K\ class heavily depends on the geometry of the local model.

The corresponding Kcsc problem (ii) is definitely more challenging and interesting. Passing from extremal to Kcscs one can rely on Calabi's Theorem \cite{Calabi2}  stating that an extremal metric in a \K\ class with vanishing Futaki invariant is indeed Kcsc. Nonetheless computing the Futaki invariant is never an easy task, and in fact, while computed in Stoppa \cite{Stoppa}, Della Vedova-Zuddas \cite{DVZ} and  Sz\'ekelyhidi \cite{Gabor1} in the smooth blow up case, is completely unknown for general resolutions. On top of this, we believe that the direct approach to the Kcsc equation is of 
 its own interest, from the PDE's point of view.

It is clear from \cite{ap1}, \cite{ap2}, \cite{alm}, that the key technical notion coming into the solution to the above problem is the rate of convergence of the model metrics $\eta_{j}$'s towards the euclidean metric. Indeed a scalar flat ALE K\"ahler metric $\eta$ has an expansion at infinity  of the form (see e.g. \cite{ap1}, Lemma 7.2) 

\begin{equation}
\eta=\begin{cases}i\partial\overline{\partial}\st \frac{|x|^2}{2}+e(X_{\Gamma})|x|^{4-2m} - c(X_{\Gamma}) |x|^{2-2m} +\mathcal{O}\st |x|^{-2m} \dt \dt & m\geq 3\\
&\\
i\partial\overline{\partial}\st \frac{|x|^2}{2}+e(X_{\Gamma})\log\st|x|\dt - c(X_{\Gamma}) |x|^{-2} +\mathcal{O}\st |x|^{-4} \dt \dt & m= 2\,.
\end{cases}
\end{equation}

\noindent The analysis required to construct Kcsc metrics with PDE's methods heavily depends on whether $e(X_{\Gamma})$ vanishes or not. 
It is interesting to observe that this analytic condition does not seem to have an easy algebraic interpretation in terms of the group $\Gamma$. In fact, even in complex dimension $2$, 
there are examples of groups which have (at least) two ALE scalar flat resolutions one with vanishing leading coefficient, and one with non vanishing one (see Le Brun \cite{leb91}, Section 6, page 244, and \cite{RollinSingerII}, Example $2$, Section $6.7$). We wish to thank H.-J. Hein, C. Spotti, C. Le Brun and I. Suvaina for many discussions about this point, and for pointing out to us these examples, which contradict our first guess. The only clear and classical fact is that if $\eta$ is Ricci flat then $e(X_{\Gamma}) = 0$ and $c(X_{\Gamma})$ is negative. It is possible that the condition 
$e(X_{\Gamma}) = 0$ characterises Ricci flat metrics among scalar flat one, on {\em{minimal}} resolutions of the singularities.

\begin{notation}
Despite the existence of the above mentioned examples, to keep the formulae below in a readable shape, we will denote by $e(\Gamma)$ the leading coefficient of the ALE scalar flat metric on $X_{\Gamma}$, i.e. $e(\Gamma) = e(X_{\Gamma})$, since there is no possibility of confusion among different resolutions of the same singularity.

\end{notation}

 \noindent In terms of these coefficients, the following table collects what is known about the main problem in the Kcsc case:
 
 \newpage

\begin{table}[!htbp]
\centering\label{tab2}
\begin{tabular}{p{3cm}| p{3.5cm}| p{3.5cm}| p{3.5cm} } 
& {\begin{center}   $e\st {\Gamma_{j}} \dt=0$ for all $j$'s\end{center}}& {\begin{center}$e\st {\Gamma_{j}}\dt=0$ for some  $j$'s and $e\st {\Gamma_{k}} \dt\neq 0$ for some $k$ \end{center}}   &  {	\begin{center} $e\st {\Gamma_{k}}\dt\neq 0$ for all  $k$\end{center}}\\
\hline\\
{\begin{center} $H^{0}\st M, TM \dt=\sg 0\dg$\end{center}}&{ \begin{center}Yes (\cite{ap1})  \end{center} }&{ \begin{center}Yes  (\cite{ap1})  \end{center} }  &{\begin{center} Yes (\cite{ap1})   \end{center}}\\
&&&\\
\hline\\
{\begin{center} $H^{0}\st M, TM \dt\neq\sg 0\dg$\end{center}}&{ \begin{center}$?_{1}$  \end{center} }&{ \begin{center}$?_{2}$ If $\dim M >2$. \end{center}
\begin{center}Yes  if $\dim\st M\dt=2$ and with points in special symplectic position (\cite{RollinSingerII}) \end{center} }&{\begin{center} Yes with points in special symplectic position  (easy adaptation of \cite{ap2}) \end{center}}\\
\hline
 \end{tabular}
\end{table}

This paper gives the exact balancing conditions to answer $?_{1}$ and $?_{2}$ hence completing the solution to the above problem in terms of the asymptotics of the local models.

\begin{teo}
\label{maintheorem}
Let $(M,g,\omega)$ be a Kcsc orbifold with isolated singularities. Let $\p\:=\sg p_1,\ldots,p_{N}\dg\subseteq M$ the set of points  with neighborhoods biholomorphic to  a ball of  $\CC^m/\Gamma_{j}$ with $\Gamma_{j}$ nontrivial finite subgroup of $U(m)$ such that $\CC^{m}/\Gamma_{j}$ admits a scalar flat ALE resolution $\st X_{\Gamma_{j}},h_j,\eta_{j} \dt$ with $e\st X_{\Gamma_{j}} \dt= 0$. 

\smallskip

\begin{itemize}
\item  Assume $\q:=\sg q_1,\ldots,q_{K}\dg\subseteq M$ is the set of points with neighborhoods biholomorphic to a ball of  $\CC^m/\Gamma_{N+l}$ such that
$\CC^{m}/\Gamma_{N+l}$ admits a scalar flat ALE resolution $(Y_{\Gamma_{N+l}},k_{l},\theta_{l})$ with $e({\Gamma_{N+l}})\neq 0$. If there exist $\ag:=\st a_{1},\ldots,a_{K} \dt\in \st\RR^{+}\dt^{K}$ such that
\begin{equation}
\label{eq:bala}
\begin{cases}
\sum_{l=1}^{K}\frac{a_{l}e\st \Gamma_{N+l} \dt}{|\st \Gamma_{N+l} \dt|}\varphi_{i}\st q_{l} \dt=0 & i=1,\ldots, d\\&\\
\st \frac{a_{l} e(\Gamma_{N+l})}{|e(\Gamma_{N+l})|}  \varphi_{i}\st q_{l} \dt \dt_{\substack{1\leq i\leq d\\1\leq l\leq K}}& \textrm{has full rank}
	\end{cases}
	\end{equation}
then there exists $\varepsilon_0>0$ such that, for any $\varepsilon < \varepsilon_0$ and any ${\bf{b}} = (b_1, \dots, b_n) \in \st\RR^{+}\dt^{N}$, the manifold
\[
{\tilde{M}} : = M \sqcup _{{p_{1}, \varepsilon}} X_{\Gamma_1} \sqcup_{{p_{2},\varepsilon}} \dots
\sqcup _{{p_{N}, \varepsilon}} X_{\Gamma_{N}}\sqcup_{{q_{1}, \varepsilon}} X_{\Gamma_{N+1}} \sqcup_{{q_{2},\varepsilon}} \dots \sqcup _{{q_{N+K}, \varepsilon}}X_{\Gamma_{N+K}},
\]
admits a Kcsc metric.
%
\smallskip

\item If $\q=\emptyset $ and there exists $ \bg \in \mathbb{R}_{+}^{N}$ and $\cg\in\RR^{N}$ such that
			
		\begin{equation}\label{eq:balbc}
		\begin{cases}
		\sum_{j=1}^{N}\st b_{j}\frac{\cgaj}{|\cgaj|}\Delta_{\omega}\varphi_{i}\st p_{j} \dt+c_{j}\varphi_{i}\st p_{j} \dt\dt=0 & i=1,\ldots, d\\
		&\\
		\st \frac{\cgaj}{|\cgaj|}  \st b_{j}\Delta_{\omega}\varphi_{i}\st p_{j} \dt+c_{j}\varphi_{i}\st p_{j} \dt\dt \dt_{\substack{1\leq i\leq d\\1\leq j\leq N}}& \textrm{has full rank}
		\end{cases}
		\end{equation}
with
\begin{align}\label{eq:tuning}
 c_{j}  =&   \,\, b_{j}\sq \frac{1}{m}s_{\omega}\st 1+\frac{\st m-1 \dt^{2}}{\st m+1 \dt} \dt -\frac{\mathtt{c}_{4,j}}{ 2  \st m-1 \dt|\Sp^{2m-1}|} \dq
\end{align}
and the constants $\mathtt{c}_{4,j}$'s  defined in formula \eqref{eq:int4}, then  there exists $\varepsilon_0>0$ such that  for any $\varepsilon < \varepsilon_0$ 
	\[
	\tilde{M} : = M \sqcup _{{p_{1}, \varepsilon}} X_{\Gamma_1} \sqcup_{{p_{2},\varepsilon}} \dots
	\sqcup _{{p_N, \varepsilon}} X_{\Gamma_N}
	\]
admits a Kcsc metric. 
%
\end{itemize}
\end{teo}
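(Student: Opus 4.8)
The plan is to follow the gluing scheme of Arezzo--Pacard \cite{ap1,ap2,aps}, adapted so as to carry the finite-dimensional obstruction produced by the Hamiltonian Killing fields and to match it, through the free gluing parameters, against the balancing conditions \eqref{eq:bala} and \eqref{eq:balbc}. First I would construct a family of approximate solutions, that is \K\ metrics $\omega_{\varepsilon}$ on $\tilde{M}$ that are Kcsc up to a small error. Near each $p_{j}$ (resp. $q_{l}$) one excises a ball of radius comparable to $r_{\varepsilon}$ and glues in the rescaled resolution $\big(X_{\Gamma_{j}},b_{j}\varepsilon^{2}\eta_{j}\big)$ (resp. $\big(Y_{\Gamma_{N+l}},a_{l}\varepsilon^{2}\theta_{l}\big)$), interpolating the \K\ potentials on the neck by cut-off functions. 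Using the asymptotic expansions of the scalar flat ALE metrics recalled above, one estimates the defect $s(\omega_{\varepsilon})-\bar{s}$ in the weighted norm $C^{0,\alpha}_{\delta-4}$: on the $\q$-necks the leading term is governed by $\egal|x|^{4-2m}$, whereas on the $\p$-necks, where $\egaj=0$, it is governed by the next coefficient $\cgaj|x|^{2-2m}$ together with the fourth order data encoded in $\mathtt{c}_{4,j}$. This step is where the dichotomy between the two cases originates.

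Next comes the linear analysis. On $\tilde{M}$ one sets up the weighted spaces $C^{4,\alpha}_{\delta}$ and $C^{0,\alpha}_{\delta-4}$ and studies the linearization of the scalar curvature map, encoded by the operator $\Lg$. The model operators are invertible away from their obstructions: on the ALE pieces this rests on Proposition \ref{potenzialiHamiltoniani}, which provides the \K\ potentials for the lift $\tilde{T}$ of the Hamiltonian fields, and on the compact base the operator is invertible modulo the $d$-dimensional space spanned by the symplectic potentials $\{\varphi_{i}\}_{i=1}^{d}$, precisely the cokernel of $\Lg$ on $(M,\omega)$. Splicing the two parametrices yields a right inverse for the linearization at $\omega_{\varepsilon}$ whose norm is bounded by a controlled negative power of $\varepsilon$, provided one works modulo the finite-dimensional space $\mathcal{E}_{\p}$ spanned by the cut-off $\varphi_{i}$ together with the deformation spaces $\mathcal{D}_{\p}(\bg,\cg)$ and $\mathcal{D}_{\q}(\ag)$ attached to the gluing data.

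The crux is the finite-dimensional reduction. Projecting the equation onto $\spa\{\varphi_{i}\}$ produces, for each $i$, a scalar obstruction whose leading part is obtained by pairing the defect with $\varphi_{i}$ and integrating by parts across the necks. In the case $\q\neq\emptyset$ this leading part localizes at the points $q_{l}$, the scales $b_{j}$ at the points $p_{j}$ contributing no leading obstruction since $\egaj=0$, and it is a nonzero multiple of $\sum_{l}a_{l}\egal\varphi_{i}(q_{l})/|\Gamma_{N+l}|$; hence the first line of \eqref{eq:bala} is exactly the vanishing of the leading obstruction, while the full rank condition in \eqref{eq:bala} says that the differential of the obstruction map in the parameters $\ag\in(\RR^{+})^{K}$ is surjective, so that the higher order corrections can be absorbed. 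In the case $\q=\emptyset$ the value contribution $\varphi_{i}(p_{j})$ is absent to leading order because $\egaj=0$, and the obstruction is instead carried by the combination of $\Delta_{\omega}\varphi_{i}(p_{j})$, coming from $\cgaj|x|^{2-2m}$, and of $\varphi_{i}(p_{j})$, coming from the fourth order term $\mathtt{c}_{4,j}$ and the scalar curvature of the base. The decisive point is that the coefficient multiplying $\varphi_{i}(p_{j})$ is not independent of the scale: the ansatz forces it to be the multiple of $b_{j}$ recorded by the tuning \eqref{eq:tuning}. With $c_{j}$ so prescribed, the leading obstruction coincides with the left-hand side of the first line of \eqref{eq:balbc}, whose vanishing, together with the full rank condition, yields the reduced solvability.

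Finally I would recast the full nonlinear problem as a fixed point equation for the pair consisting of a potential correction and the deformation parameters, inverting the reduced map by means of the right inverse together with the submersivity furnished by the full rank conditions, and close a contraction argument by estimating the quadratic remainder of the scalar curvature in the weighted norms. For $\varepsilon<\varepsilon_{0}$ this produces an exact solution, hence a Kcsc metric on $\tilde{M}$. I expect the main obstacle to be the case $\egaj=0$: there the leading obstruction is pushed to higher order, so one must extract the fourth order coefficient $\mathtt{c}_{4,j}$ cleanly, check that the geometric tuning \eqref{eq:tuning} genuinely aligns the single scale parameter $b_{j}$ with the two-term obstruction $b_{j}\Delta_{\omega}\varphi_{i}+c_{j}\varphi_{i}$, and ensure that the right inverse does not degenerate faster than these obstruction terms decay, so that the full rank condition stays effective in the limit $\varepsilon\to 0$.
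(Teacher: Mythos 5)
Your plan is not the paper's plan: you build a single global approximate metric on $\tilde{M}$ by cut-off interpolation and then run a Lyapunov--Schmidt reduction against $\spa\{\varphi_1,\ldots,\varphi_d\}$, in the spirit of \cite{Gabor1} and \cite{aps}. The paper never writes down a global approximate solution. It constructs, for each $\varepsilon$, families of \emph{exact} Kcsc metrics separately on the truncated base $M_{\rep}$ (Propositions \ref{crucialbaseq} and \ref{crucialbase}, where the balancing conditions \eqref{eq:bala} and \eqref{eq:balbc} enter through the existence of the multi-poles fundamental solutions $\GGG_{\aaa,\bbb,\ccc}$ and the nondegeneracy of the matrices $\Xi$, $\Theta$), and on the truncated models $X_{\Gamma_{j},\Rep}$, $Y_{\Gamma_{N+l},\Rep}$ (Propositions \ref{crucialmodelloUm} and \ref{crucialmodello}), each family parametrized by pseudo-boundary data $\st \hg,\kg \dt$ and scale parameters; Theorem \ref{maintheorem} then follows by matching Cauchy data across the necks, and the tuning \eqref{eq:tuning} arises as the $\varepsilon\to 0$ limit of the compatibility \eqref{eq:merda1}--\eqref{eq:merda2} between the $|x|^{4-2m}$ coefficients of the inner and outer expansions. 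Your dictionary (first line of \eqref{eq:bala}/\eqref{eq:balbc} $=$ vanishing of the leading cokernel projection, full rank $=$ submersivity in $\ag$, resp.\ $\bg$) is correct in spirit, and in the case $\q\neq\emptyset$, where the obstruction lives at order $\varepsilon^{2m-2}$ and dominates the neck errors, your route is plausibly viable. (Minor slip: $\mathcal{E}_{\p}$ is spanned by cut-off \emph{constants} $\chi_{p_j}$, used only for $m=2$, not by cut-off $\varphi_i$'s.)

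In the case $\q=\emptyset$, however, your scheme has a genuine gap, precisely at the point you flag as the main obstacle. When all $\egaj=0$ the obstruction to be balanced has size $\varepsilon^{2m}$, and the projection of the error of your ansatz onto the $\varphi_i$'s must therefore be identified up to $o\st \varepsilon^{2m}\dt$. A crude cut-off interpolation at scale $\rep=\varepsilon^{\frac{2m-1}{2m+1}}$ does not achieve this: the neck errors generated by truncating the base potential $\Psi_4$, the ALE tails, and their cross terms are not negligible at that order, and, more fundamentally, the constant $\mathtt{c}_{4,j}$ appearing in \eqref{eq:tuning} is invisible to such an ansatz. By its definition \eqref{eq:int4}, $\mathtt{c}_{4,j}$ is the cokernel integral of an auxiliary linear problem \emph{on} $X_{\Gamma_j}$, namely the obstruction to correcting the transplanted quartic potential $\chi\Psi_4+u_4$ to a genuine solution of $\mathbb{L}_{\eta_j}\sq \,\cdot\, \dq=-2s_{\omega}$ with decay; it only materializes after one builds the corrections $u_4,v_4$ (and $W_0,W_2,W_3,W_5$) into the metric on the resolution, together with the skeleton $\varepsilon^{2m}\GGG_{{\bf 0},\bg,\cg}$ on the base whose $|z|^{2-2m}$ singularity matches the ALE tail exactly. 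Without these refinements your reduced equation will not reproduce the combination $b_{j}\Delta_{\omega}\varphi_{i}\st p_{j}\dt+c_{j}\varphi_{i}\st p_{j}\dt$ with $c_j$ given by \eqref{eq:tuning}, so the hypothesis of the theorem cannot be matched to the solvability condition of your fixed point problem. Once you do incorporate them, you have essentially rebuilt the paper's inner and outer families, and the natural way to close the argument is then the paper's Cauchy-data matching rather than a one-shot global reduction; as written, the step ``extract the fourth order coefficient $\mathtt{c}_{4,j}$ cleanly'' is asserted but not supplied, and it is exactly where the proposal breaks down.
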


\begin{remark}
As a byproduct of our analysis, the \K\ class of the Kcsc metrics produced on $\tilde{M}$ can be specified in terms of the \K\ classes of the building blocks. More precisely, using the same notation as in the statement of Theorem~\ref{maintheorem}, we have that:
\begin{itemize}
\item If $\q \neq\emptyset $, then for every $ \varepsilon < \varepsilon_0$ the Kcsc metrics constructed on 
\[
{\tilde{M}} : = M \sqcup _{{p_{1}, \varepsilon}} X_{\Gamma_1} \sqcup_{{p_{2},\varepsilon}} \dots
\sqcup _{{p_{N}, \varepsilon}} X_{\Gamma_{N}}\sqcup_{{q_{1}, \varepsilon}} X_{\Gamma_{N+1}} \sqcup_{{q_{2},\varepsilon}} \dots \sqcup _{{q_{N+K}, \varepsilon}}X_{\Gamma_{N+K}},
\]
belongs to the class 
$$
\pi^*[\omega]\,  + \, \sum_{l=1}^K\varepsilon^{2m-2} \tilde{a}_{l} ^{2m-2}[\tilde{\theta_{l}}] \, + \, \sum_{j=1}^N\varepsilon^{2m}b_{j} [\tilde{\eta_j}]   
$$ 
where, in terms of the standard inclusions, the $[\tilde{\eta}_{j}]$'s and the $[\tilde{\theta}_{l}]$'s are respectively given by 
\begin{eqnarray*}
\mathfrak{i}_{j}^{*}\sq \tilde{\eta}_{j} \dq=[\eta_{j}] & \hbox{with}&  \mathfrak{i}_{j}:X_{\Gamma_{j},\Rep}\hookrightarrow \tilde{M}\\
\mathfrak{i}_{N+l}^{*}  [ \tilde{\theta}_{l} ] = [\theta_{l}] &\hbox{with}&  \mathfrak{i}_{N+l}:Y_{\Gamma_{N+l},\Rep}\hookrightarrow \tilde{M}
\end{eqnarray*}
whereas the coefficient $\tilde{a}_l$'s are related to the ones appearing in the balancing condition~\eqref{eq:bala}. More precisely they satisfy the estimate
\begin{align}
\left|\tilde{a}_{l}^{2} - \frac{|\Gamma_{N+l}|\, a_{l}}{4 \, |\Sp^{3}| \, |e({\Gamma_{N+l}})|}\right| \, \leq& \,\, C \varepsilon^{\gamma}&\qquad &\textrm{for }m= 2\\
\left|\tilde{a}_{l}^{2m-2} - \frac{|\Gamma_{N+l}| \, a_{l}}{8(m-2)(m-1) \, |\Sp^{2m-1}| \, |e({\Gamma_{N+l}})|}\right|\,  \leq & \,\, C \varepsilon^{\gamma}&\qquad& \textrm{for }  m\geq 3
\end{align}
for some $\gamma>0$.

\smallskip

\item If $\q = \emptyset $, then for every $ \varepsilon < \varepsilon_0$ the Kcsc metrics constructed on 
	\[
	\tilde{M} : = M \sqcup _{{p_{1}, \varepsilon}} X_{\Gamma_1} \sqcup_{{p_{2},\varepsilon}} \dots
	\sqcup _{{p_N, \varepsilon}} X_{\Gamma_N}
	\]
belongs to the class 
$$
\pi^{*}\sq\omega \dq+ \sum_{j=1}^{N}\varepsilon^{2m}\tilde{b}_{j}^{2m}\sq \tilde{\eta}_{j} \dq
$$ 
where the $\sq \tilde{\eta}_{j} \dq$'s satisfy $\mathfrak{i}_{j}^{*}\sq \tilde{\eta}_{j} \dq = [\eta_{j}]$, the maps $\mathfrak{i}_{j}:X_{\Gamma_{j},\Rep}\hookrightarrow \tilde{M}$ being the standard inclusions, whereas the coefficients $\tilde{b}_{j}$ are related to the ones appearing in the balancing conditions~\eqref{eq:balbc} and~\eqref{eq:tuning}. More precisely, they satisfy the estimates
$$
\left|\, \tilde{b}_{j}^{2m} - \frac{|\Gamma_{j}|b_{j}}{2\st m-1 \dt}\, \right| \, \leq \, C \varepsilon^{\gamma} \, 
$$
for some $\gamma>0$.

\end{itemize}
\end{remark}

In light of the above results, it would be very interesting to find a direct algebraic proof of the fact that the equations appearing in the system (\ref{eq:bala}) and (\ref{eq:balbc}) are indeed the vanishing of the Futaki invariant of the \K\ classes we construct.

The strategy of proof follows the line  of the one of \cite[Theorem 1.1]{alm} and \cite[Theorem 1.1]{ap2} and we briefly recall it here for the sake of clearness. 
 The first step is the construction of families -- depending on carefully choosen parameters -- of Kcsc metrics on $M_{\rep}$ (that is the starting orbifold $M$ with small neighborhoods of points in $\p$ and $\q$ removed) and on $X_{\Gamma_{j},\Rep}$'s and $Y_{\Gamma_{N+l}}$'s (that are large compact sets of the local model of resolutions of points respectively of $\p$ and $\q$). The existence of such noncomplete Kcsc metrics follows from equations \eqref{eq:bala} in the first case, and \eqref{eq:balbc} in the second one.

Taking advantage of the possibility of   ``moving" the families of Kcsc metrics by changing their structural parameters, it is possible to find, via an iterative procedure, the correct choice of parameters for which the various families match at the boundaries and produce, at once, the desingularization $\tilde{M}$ together with the Kcsc metric $\tilde{\omega}$.  This step crucially needs equation \eqref{eq:tuning},  when $\q=\emptyset $.

As the reader can note, we  assume the existence of a local model of resolution that is ALE K\"ahler. This is necessary if one tries to perform a gluing construction, but it is indeed a hard problem determining whether a quotient $\CC^{m}/\Gamma$ admits a scalar flat  ALE K\"ahler resolution. In complex dimension $2$ the situation is well understood. Indeed, LeBrun in \cite{LeBrun1}  constructed scalar flat ALE K\"ahler  resolutions for $\CC^{2}/\ZZ_{k}$ with $\ZZ_{k}$ acting diagonally with same weights, Calderbank and Singer in \cite{CalderbankSinger} provided   toric (ALE K\"ahler ) scalar flat  resolutions for $\CC^{2}/\ZZ_{k}$ for any action of $\ZZ_{k}$ and Viaclovsky and Lock in \cite{ViaclovskyLock} settled the case with $\Gamma$ non abelian proving that there exist a scalar flat ALE K\"ahler metric on the minimal resolution of {\em any} $\CC^{2}/\Gamma$. In dimension greater or equal than $3$ the class of finite subgroups of $U(m)$ acting freely on the sphere enlarges greatly and very little is known on the existence of scalar flat ALE K\"ahler metrics on resolutions of the quotient singularities. It is possible to generalize to any dimension the result of \cite{LeBrun1} for group $\ZZ_{k}$ with diagonal action with same weights, and for dimension $3$ a combination of the results of Joyce \cite{j}, Goto \cite{Goto}, Van Coevering \cite{VanCoevering} and Conlon-Hein \cite{ConlonHein} ensure the existence of  Ricci-flat ALE K\"ahler metrics on crepant resolutions of $\CC^{3}/\Gamma$ with $\Gamma$ a finite subgroup of SU(3).

\section{Notations and preliminaries}\label{preliminaries}

From now on $\st  M, g,\omega \dt$ will be a K\"ahler orbifold  with isolated singular points, with extremal metric $g$ and extremal vector field $X_{s}$. We denote with  $G:=Iso_{0}\st M,g \dt\cap Ham\st M,\omega \dt$   the identity component of the group of Hamiltonian isometries and with $\mathfrak{g}$ its Lie algebra.   
Moreover, we denote with $T\subset G$ the maximal torus whose Lie algebra $\mathfrak{t}$ contains the extremal vector field $X_{s}$. It is a standard fact (see \cite{BochnerMartin}) that the action of $T$ can be linearized at fixed points, more precisely it is possible to find adapted K\"ahler normal coordinates in some neighborhood $U$ of fixed point $p$ such that
\begin{equation}
\omega=i\dd\st \frac{|z|^{2}}{2}+\psi_{\omega}\st z \dt \dt\qquad\textrm{ with }\qquad \psi_{\omega}\st z \dt= \mathcal{O}\st |z|^{4} \dt
\end{equation}
and $T$ acts on $U$  as a subgroup of $U\st m \dt$.    
Clearly, singular points are fixed points for the action of $G$ and hence every $\gamma\in G$  lifts to a $\tilde{\gamma}\in Aut_{0} \big(\tilde{M} \big)$ so we denote with $\tilde{G}$ and $\tilde{T}$ the lifts of $G$ and of $\tilde{T}$ to $\tilde{M}$ respectively.

\subsection{Singular points and local resolutions.} We consider then two distinguished  subsets  of  singular points $\p$ and $\q$. We denote with $\p\:=\sg p_1,\ldots,p_{N}\dg\subseteq M$ the set of points  with neighborhoods biholomorphic to  a ball of  $\CC^m/\Gamma_{j}$ with $\Gamma_{j}$ nontrivial finite subgroup of $U(m)$ such that $\CC^{m}/\Gamma_{j}$ admits a scalar flat ALE resolution $\st X_{\Gamma_{j}},h_j,\eta_{j} \dt$ with $\egaj= 0$.  Moreover, we denote with $\q:=\sg q_1,\ldots,q_{K}\dg\subseteq M$ be the set of points with neighborhoods biholomorphic to a ball of  $\CC^m/\Gamma_{N+l}$ such that
$\CC^{m}/\Gamma_{N+l}$ admits a scalar flat  ALE resolution $(Y_{\Gamma_{N+l}},k_{l},\theta_{l})$ with $e({\Gamma_{N+l}})\neq 0$.   Whenever we will work {\em extremal} metrics we assume that groups $\Gamma_{N+l}$ are nontrivial. When there is no risk of confusion and no need to make the above distinction, we will just indicate by $\st X_{\Gamma_{j}},h_j,\eta_{j} \dt$ the local model irrespectfully of the asymptotic of the metric.

\subsection{Eigenfunctions and eigenvalues of $\Delta_{\Sp^{2m-1}}$.}

We denote with  $\Sp^{2m-1}$ is the unit sphere of real dimension $2m-1$, endowed with  metric induced by $(\mathbb{C}^m, g_{eucl})$. We also denote  by $\{\phi_k\}_{k\in \NN}$ a complete orthonormal system of  $L^2(\Sp^{2m-1})$, generated by   eigeinfunctions $\phi_{k}$'s of  $\Delta_{\Sp^{2m-1}}$, so that, for every $k \in \NN$,
$$
\Delta_{\Sp^{2m-1}} \phi_k \,  = \,  \lambda_k \phi_k
$$ 
and  we will indicate with $\Phi_j$ the generic element of the $j$-th eigenspace of $\Delta_{\Sp^{2m-1}}$. For future convenience we introduce the following notation, given $f\in L^{2}\st \Sp^{2m-1} \dt$ we denote  with $f^{(k)}$ the  $L^2\st \Sp^{2m-1} \dt$-projection of $f$ on the $\Lambda_{k}$-eigenspace of $\Delta_{\Sp^{2m-1}}$ and 
\begin{equation}
f^{(\dagger)}:=f-f^{(0)}
\end{equation}

\subsection{The extremal equation}

\noindent We denote by $s_\omega$ the scalar curvature of the  metric $g$ and by $\rho_\omega$ its Ricci form.
We denote moreover with $\s_{\omega}$  the scalar curvature operator  
$$
\mathbf{S}_\omega(\cdot) \, : \, C^\infty(M) \longrightarrow C^\infty(M) \, , \qquad \qquad
 f \,\longmapsto \,\mathbf{S}_{\omega}(f) := s_{\omega+ i\dd f} \, ,
$$ 

We denote with $\mu_{\omega}: M\rightarrow \mathfrak{g}^{*}$   the Hamiltonian moment map ( for a detailed account of Hamiltonian moment map see e.g. \cite{SalamonMcDuff} )  for the action of $G$ on $M$ and we say that it is normalized if
\begin{equation}
\int_{M}\left<\mu_{\omega}, X \right>d\mu_{g}=0 \qquad X\in \mathfrak{g}
\end{equation}
Using an invariant scalar product on $\mathfrak{g}$ and the natural identifications we can regard $\mu_{\omega}$ as 
\begin{equation}
\mu_{\omega}: M\rightarrow TM^{*}\,.
\end{equation}

 As exposed in \cite{LeBrun} the {\em extremal} equation for  $\omega$ corresponds to the prescription
\begin{equation}
\overline{\partial}\partial^{\sharp}s_{\omega}=0
\end{equation}
and in terms of the Hamiltonian moment map this is equivalent to
\begin{equation}\label{eq: estremalegenerale}
s_{\omega}=\left<\mu_{\omega},X_{s}\right>+\frac{1}{\vol\st M \dt}\int_{M}s_{\omega}\,d\mu_{\omega}\,.
\end{equation}
with $X_{s}$ a holomorphic vector field on $M$.

In the sequel we will construct families of metrics in a fixed cohomology class, so it is necessary to understand how  equation \eqref{eq: estremalegenerale} changes if we consider another K\"ahler metric cohomologous to $\omega$.  Once we fix a K\"ahler class  $[\omega]$ and we fix a K\"ahler form  $\omega \in[\omega]$ then the {\em extremal} equation in the class $[\omega]$ is the nonlinear PDE  in the  unknowns  $f\in C^{\infty}(M )$ , $c\in \RR$ and $X\in H^{0}\st M, TM \dt$  

\begin{equation}\label{eq: estremaleclasse}
\s_{\omega}\st f \dt= c + \left<\mu_{\omega+i\dd f},  X\right>+\frac{1}{\vol\st M \dt}\int_{M}s_{\omega}\,d\mu_{\omega}\,.
\end{equation}

\begin{remark}
In equation \eqref{eq: estremaleclasse} appears an unknown constant $c$ because the perturbed moment $\mu_{\omega+i\dd f}$ map is, in general, not normalized.  It is hence  needed this further degree of freedom to obtain the correct {\em extremal} equation.
\end{remark} 

When  studying of a PDE,  it is a standard procedure  to consider the map, between suitable functional spaces, induced by the PDE itself. In the case of equation \eqref{eq: estremaleclasse} the induced map 
\begin{equation}
\mathscr{E}:  \mathcal{D}\subseteq  C^{4,\alpha}\st M \dt\times \RR\times H^{0}\st  M, TM \dt\longrightarrow \RR
\end{equation}
is defined as 
\begin{equation}\label{eq:operatoreestremale}
\mathscr{E}\st f,c,X \dt:=\s_{\omega}\st f \dt- c - \left<\mu_{\omega+i\dd f},  X\right>-\frac{1}{\vol\st M \dt}\int_{M}s_{\omega}\,d\mu_{\omega}\,.
\end{equation}  
and it is a matter of fact that  is highly nonlinear in its variables and the extremal metrics correspond to the triples $\st f,c,X \dt$ such that $\mathscr{E}\st f,c, X \dt=0$.

\vspace{10pt}
From now on we will work in the $T$-invariant framework, so  we  indicate with $C^{k,\alpha} (M )^{T}$ the subset of   $T$-invariant functions in $C^{k,\alpha}( M )$ and the definition of the map $\mathscr{E}$ in the $T$-invariant setting is the obvious one i.e.
\begin{equation}
\mathscr{E}:  \mathcal{D}\subseteq  C^{4,\alpha}\st M \dt^{T}\times \RR\times \mathfrak{t}\longrightarrow \RR
\end{equation}
with $\mathscr{E}$ acting in the same way as above.  The following results, contained in \cite{aps}, are  indispensable to manipulate \eqref{eq:operatoreestremale} in order to solve equation \eqref{eq: estremaleclasse} in  portions of the spaces we will consider in the sequel. 

\vspace{10pt}

It is  natural to look for  a ``Taylor expansion" for $\mathcal{E}$ at a zero, in order to study the behavior of $\mathcal{E}$ for small perturbations of an {\em extremal} metric. The first step is to understand  how the moment map changes as the symplectic form moves in a fixed K\"ahler class and this is done in the following proposition. 

\begin{prop} 
 Let $\st M, g,\omega \dt$ be a K\"ahler manifold with a Hamiltonian action of  a Torus  $T\subset G$ and $f\in C^{\infty}(M )^{T}$  such that 
\begin{equation}
\tilde{\omega}:=\omega+i\dd f
\end{equation} 
is the K\"ahler form of a $T$-invariant K\"ahler metric. A Hamiltonian moment map $\mu_{\tilde{\omega}}$ relative to $\tilde{\omega}$ is 
\begin{equation}
\left<\mu_{\tilde{\omega}}, X \right> := \left<\mu_{\omega},  X \right> -\frac{1}{2}JXf 
\end{equation} 
\end{prop}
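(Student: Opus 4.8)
The statement asks to verify that when we deform a $T$-invariant Kähler form $\omega$ by adding $i\partial\bar\partial f$ with $f$ a $T$-invariant smooth function, the formula
\[
\langle \mu_{\tilde\omega}, X\rangle := \langle \mu_\omega, X\rangle - \tfrac{1}{2} JXf
\]
genuinely defines a Hamiltonian moment map for the action of $T$ on $(M,\tilde\omega)$.

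Let me sketch how I would prove this.

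**The plan.**

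The plan is to verify the defining property of a Hamiltonian moment map directly. Recall that $\langle\mu_{\tilde\omega},X\rangle$ is a moment map for the Hamiltonian $X$ precisely when its differential satisfies $d\langle\mu_{\tilde\omega},X\rangle = \iota_X\tilde\omega$, i.e. $X$ is the Hamiltonian vector field of the function $\langle\mu_{\tilde\omega},X\rangle$ with respect to $\tilde\omega$. Since $\mu_\omega$ is assumed to be a moment map for $\omega$, we already have $d\langle\mu_\omega,X\rangle=\iota_X\omega$. Therefore, writing $\tilde\omega=\omega+i\partial\bar\partial f$, the problem reduces to checking the single identity
\[
d\big(-\tfrac{1}{2}JXf\big) \;=\; \iota_X\big(i\partial\bar\partial f\big),
\]
after which linearity in $X$ and equivariance under $T$ follow automatically.

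**Carrying it out.**

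First I would rewrite the correction term. The function $JXf$ is the derivative of $f$ along the vector field $JX$; here $X$ is a real holomorphic Killing field (in $\mathfrak{t}$) and $J$ is the complex structure, so $JX$ is its image under $J$. I would use that $X$ is a real holomorphic vector field, meaning $\mathcal{L}_X J=0$, together with the Kähler identity relating $i\partial\bar\partial f$ to the real Hessian. The cleanest route is to exploit the complex formalism: in holomorphic coordinates $i\partial\bar\partial f$ has components $i\,\partial_\alpha\partial_{\bar\beta}f\,dz^\alpha\wedge d\bar z^\beta$, and contracting with the real vector field $X$ (decomposed into its $(1,0)$ and $(0,1)$ parts $X^{1,0}+X^{0,1}$) produces a $1$-form that I want to identify with $-\tfrac{1}{2}d(JXf)$. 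A convenient intermediate step is the general identity, valid for any real vector field $X$ preserving $J$ and any smooth $f$,
\[
\iota_X\big(i\partial\bar\partial f\big) \;=\; -\tfrac{1}{2}\,d(JXf) \;+\; \tfrac{1}{2}\,J^*\big(d(\mathcal{L}_X f)\big)\,,
\]
where the last term drops out once one uses that $X$ is Killing for $\omega$ and $T$-invariance gives $\mathcal{L}_X f=0$ (indeed $f$ is $T$-invariant, so its derivative along the generator $X\in\mathfrak{t}$ vanishes). I would derive this identity from Cartan's formula $\mathcal{L}_X = d\iota_X+\iota_X d$ applied to $\bar\partial f$ together with $\mathcal{L}_X(\partial\bar\partial f)=\partial\bar\partial(\mathcal{L}_X f)=0$.

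**The main obstacle.**

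The delicate point is the bookkeeping of factors of $i$ and $\tfrac{1}{2}$ in passing between the real and complex pictures, and correctly handling the contraction $\iota_X$ when $X$ is real but the form $i\partial\bar\partial f$ is most naturally expressed in complex type components. I expect the crux to be establishing the intermediate identity above cleanly; once the Lie-derivative argument shows the spurious $\mathcal{L}_X f$ term vanishes by $T$-invariance and the Killing property, the rest is a direct comparison of the two $1$-forms. A secondary check is that $\langle\mu_{\tilde\omega},X\rangle$ depends linearly on $X\in\mathfrak{t}$ and satisfies the equivariance/Poisson-bracket compatibility making it a genuine moment map for the abelian group $T$; for a torus the latter is automatic since the bracket terms vanish, so no normalization constant is forced at this stage (that freedom is exactly what the unknown constant $c$ in equation~\eqref{eq: estremaleclasse} later absorbs).
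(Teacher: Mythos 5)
The paper never proves this Proposition: it is recalled verbatim from \cite{aps} as background, so there is no internal proof to compare against, only the standard computation that reference performs. Your plan is exactly that computation, and structurally it is correct: reduce, via $d\langle\mu_{\tilde\omega},X\rangle=\iota_X\tilde\omega$ and the same identity for $\omega$, to showing that $\iota_X\st i\dd f\dt$ is exact with primitive the correction term; obtain this from Cartan's formula applied to $\overline{\partial}f$, using $\mathcal{L}_XJ=0$ to commute the Lie derivative past $\overline{\partial}$ and $T$-invariance of $f$ to kill the spurious term; and observe that equivariance is automatic for a torus. Those are precisely the right ingredients.

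The point you must repair is the sign of your key intermediate identity, which as stated is inconsistent with the convention you yourself fix at the outset. With the standard complex structure ($J=$ multiplication by $i$) and the standard interior product (the one for which Cartan's formula holds), applying $\mathcal{L}_X=d\iota_X+\iota_Xd$ to $\overline{\partial}f$ and splitting $X=X^{1,0}+X^{0,1}$ gives, for any real holomorphic $X$,
\begin{equation}
\iota_X\st i\dd f \dt \,=\, +\tfrac{1}{2}\,d\st JXf \dt \,-\, \tfrac{1}{2}\,d\st \mathcal{L}_Xf \dt\circ J\,,
\end{equation}
i.e.\ the opposite sign of the identity you wrote, in the term that survives. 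A concrete check: on $\CC$ with $f=|z|^4/4$ and $X=-y\partial_x+x\partial_y$ one has $i\dd f=2|z|^2\,dx\wedge dy$ and $JXf=-|z|^4$, and both $\iota_X\st i\dd f\dt$ and $\tfrac12 d\st JXf\dt$ equal $-2|z|^2\st x\,dx+y\,dy\dt$, while your formula would force them to be opposite. Consequently, under your stated normalization $d\langle\mu,X\rangle=\iota_X\omega$ you would prove the Proposition with $+\tfrac12 JXf$; the minus sign in the statement corresponds to the opposite moment-map convention $d\langle\mu,X\rangle=-\iota_X\omega$, which neither the paper nor \cite{aps} spells out here. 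The two sign slips in your write-up cancel, so nothing in the architecture of the argument breaks, but as written the crux identity is false and the proof would not survive a coordinate check: fix one moment-map convention explicitly and carry the sign through consistently. A minor further point: the vanishing of the extra term requires only $\mathcal{L}_Xf=0$ (i.e.\ $T$-invariance of $f$) together with holomorphy $\mathcal{L}_XJ=0$; the Killing property of $X$ plays no role in that step.
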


With the next proposition we exploit the local structure of map $\mathscr{E}$ in a neighborhood of a zero.

\begin{prop}\label{variazioneestremale}
Let $\st M,g,\omega \dt$ be a compact extremal K\"ahler manifold with extremal vector field $X_{s}$, with $T$-invariant metric $g$ and $\mu_{\omega}$ a normalized moment map for the action of $G$. Let $f\in C^{\infty} (M )^{T} $ such that $\omega+i\dd f$ is a K\"ahler metric. If the triple $\st f,X, c\dt\in C^{4,\alpha}\st M \dt^{T}\times \RR\times \mathfrak{t}$ is sufficiently small i.e.
\begin{equation}
\left\|f\right\|_{C^{4,\alpha} (M )^{T}}+|c|+\left\| X \right\|_{C^{4,\alpha} (M )^{T}}< \mathsf{C}
\end{equation}
for $C>0$ sufficiently small,  then 
\begin{equation}\label{eq:variazioneestremale}
\mathscr{E}\st f, c+ \frac{1}{\vol\st M \dt}\int_{M}s_{\omega}\,d\mu_{\omega}, X_{s}+X\dt=-\frac{1}{2}\Lg\sq f\dq-\frac{1}{2}\left<\nabla s_{\omega}, \nabla f\right>-\left<\mu_{\omega},X\right>+c+\frac{1}{2}JXf+\frac{1}{2}\NN_{\omega}\st f \dt\,.
\end{equation}
where $\Lg$ is given by
\begin{equation}
{\mathbb L}_\omega f \,  =  \, \Delta^{2}_\omega f \,   +  \, 4 \, \langle \, \rho_\omega \, | \,   i\dd f \, \rangle  \, , \label{eq:defLg}
\end{equation}
and $\NN_{\omega}$ is the nonlinear remainder.
\end{prop}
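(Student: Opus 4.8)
The plan is to insert the shifted arguments into the definition \eqref{eq:operatoreestremale} of $\mathscr{E}$ and to expand separately its three $f$-dependent pieces: the scalar curvature operator $\s_{\omega}\st f \dt$, the perturbed moment map pairing, and the constant slot. Three ingredients feed the computation: (a) the Taylor expansion of the scalar curvature operator at $f=0$; (b) the moment map variation formula of the preceding proposition; and (c) the background extremal equation \eqref{eq: estremalegenerale}.

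First I would record the expansion of the scalar curvature operator. The classical formula for the linearization of the scalar curvature under deformations of the K\"ahler potential, combined with the definition \eqref{eq:defLg} of $\Lg$, gives
\begin{equation*}
\s_{\omega}\st f \dt \, = \, s_{\omega} \, - \, \tfrac{1}{2}\Lg\sq f \dq \, + \, \tfrac{1}{2}\NN_{\omega}\st f \dt \, ,
\end{equation*}
where the last term is by definition the nonlinear remainder $\NN_{\omega}\st f \dt := 2\st \s_{\omega}\st f \dt-s_{\omega}+\tfrac{1}{2}\Lg\sq f \dq \dt$. The feature worth emphasizing is that the first-order gradient contributions produced by the linearization of the scalar curvature cancel among themselves, so that exactly the fourth-order operator $\Lg$ survives at this stage, with \emph{no} term $\langle \nabla s_{\omega},\nabla f\rangle$. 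Smoothness of $f\mapsto s_{\omega+i\dd f}$ in the potential yields $\NN_{\omega}\st f \dt=\mathcal{O}\st \|f\|^{2}\dt$ for $f$ small, which is precisely what the smallness hypothesis provides and which also guarantees that $\omega+i\dd f$ is K\"ahler, so that the moment map formula applies.

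Next I would expand the pairing. Applying the preceding proposition to $\tilde{\omega}=\omega+i\dd f$ and to the field $X_{s}+X$ yields
\begin{equation*}
\langle \mu_{\omega+i\dd f},\, X_{s}+X\rangle \, = \, \langle \mu_{\omega},X_{s}\rangle \, + \, \langle \mu_{\omega},X\rangle \, - \, \tfrac{1}{2}JX_{s}f \, - \, \tfrac{1}{2}JXf \, .
\end{equation*}
I would then substitute the background extremal equation \eqref{eq: estremalegenerale}, in the form $\langle \mu_{\omega},X_{s}\rangle = s_{\omega}-\tfrac{1}{\vol\st M \dt}\int_{M}s_{\omega}\,d\mu_{\omega}$, so that the background scalar curvature $s_{\omega}$ cancels against the corresponding term coming from $\s_{\omega}\st f \dt$ and the constants collapse to the single term recorded in \eqref{eq:variazioneestremale}. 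The decisive step is the treatment of the surviving first-order term $-\tfrac{1}{2}JX_{s}f$: since $\langle \mu_{\omega},X_{s}\rangle$ is the Hamiltonian of $X_{s}$ and coincides with $s_{\omega}$ up to a constant, one has $JX_{s}=-\nabla s_{\omega}$, hence $JX_{s}f=-\langle \nabla s_{\omega},\nabla f\rangle$, and $-\tfrac{1}{2}JX_{s}f$ becomes exactly the advertised $-\tfrac{1}{2}\langle \nabla s_{\omega},\nabla f\rangle$. Gathering all contributions produces \eqref{eq:variazioneestremale}.

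The only genuinely non-formal point is this bookkeeping of the first-order terms. It would be natural to expect the gradient term $\langle \nabla s_{\omega},\nabla f\rangle$ to issue from the linearization of the scalar curvature, whereas in fact that linearization delivers only $-\tfrac{1}{2}\Lg f$; the gradient term is instead produced by the variation of the moment map evaluated on the \emph{background} extremal field $X_{s}$, and its identification relies on the extremality of $g$ through the Hamiltonian interpretation of $X_{s}$ together with \eqref{eq: estremalegenerale}. This is also the mechanism by which, when $X=0$, the linearized part $-\tfrac{1}{2}\st \Lg f+\langle \nabla s_{\omega},\nabla f\rangle \dt$ recombines into a self-adjoint fourth-order (Lichnerowicz-type) operator, as one expects for the extremal problem. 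Everything else reduces to the cancellations above, the error being controlled by the smallness of $\st f,c,X\dt$.
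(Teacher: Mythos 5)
The paper never actually proves this proposition: it is imported from \cite{aps} (``The following results, contained in \cite{aps}, are indispensable\dots''), so the only thing your argument can be compared with is the computation that citation stands for, and yours is exactly that computation. Your three ingredients --- the expansion \eqref{eq:espsg} of $\s_{\omega}$, the moment-map variation formula of the preceding proposition, and the background extremal equation \eqref{eq: estremalegenerale} --- are the right ones, and you correctly isolate the one non-formal point: with $\Lg$ defined by \eqref{eq:defLg}, the linearization of $f\mapsto s_{\omega+i\dd f}$ is exactly $-\tfrac{1}{2}\Lg$ with no gradient term, and the term $-\tfrac{1}{2}\left<\nabla s_{\omega},\nabla f\right>$ in \eqref{eq:variazioneestremale} is instead produced by the moment-map variation evaluated on the background extremal field, through the Hamiltonian identity $JX_{s}f=-\left<\nabla s_{\omega},\nabla f\right>$ coming from $\left<\mu_{\omega},X_{s}\right>=s_{\omega}-\bar{s}$, where $\bar{s}$ denotes the average of $s_{\omega}$.

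Two pieces of bookkeeping should be repaired, however. First, the pairing $\left<\mu_{\omega+i\dd f},X_{s}+X\right>$ enters $\mathscr{E}$ with a minus sign, so the first-order term that survives \emph{inside} $\mathscr{E}$ is $+\tfrac{1}{2}JX_{s}f$, not $-\tfrac{1}{2}JX_{s}f$; combined with your identity $JX_{s}f=-\left<\nabla s_{\omega},\nabla f\right>$ this yields the advertised $-\tfrac{1}{2}\left<\nabla s_{\omega},\nabla f\right>$, whereas the sentence as you wrote it (``$-\tfrac{1}{2}JX_{s}f$ becomes $-\tfrac{1}{2}\left<\nabla s_{\omega},\nabla f\right>$'') reaches the correct conclusion only through two compensating sign slips. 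Second, the constants do not literally ``collapse'' to $+c$: the definition \eqref{eq:operatoreestremale} subtracts both the second argument and $\bar{s}$, so evaluating at $\st f, c+\bar{s}, X_{s}+X\dt$ and substituting $\left<\mu_{\omega},X_{s}\right>=s_{\omega}-\bar{s}$ leaves $-\st c+\bar{s}\dt$ in the place where \eqref{eq:variazioneestremale} displays $+c$. This mismatch is an inconsistency of normalization inside the paper itself (between \eqref{eq:operatoreestremale} and \eqref{eq:variazioneestremale}), and it is mathematically harmless because $c$ is a free unknown constant that can be renamed; but a verification whose whole content is an exact cancellation of terms should carry the constants through explicitly and flag this, rather than assert the collapse.
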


\begin{remark}
The immediate consequence of Proposition \ref{variazioneestremale} is that if we want to solve equation \eqref{eq: estremaleclasse} in a  small neighborhood of an {\em extremal} metric then it is sufficient to solve the following equation 
\begin{equation}\label{eq:estremalecompatta1}
\Lg\sq f\dq+\left<\nabla s_{\omega}, \nabla f\right>+2\left<\mu_{\omega},X\right>\,=\,2c+JXf+\NN_{\omega}\st f \dt\,.
\end{equation}
\end{remark}

The purpose of expanding the nonlinear map $\mathscr{E}$ is twofold: on one side it is  necessary to put in evidence its linearization which  encodes lots of informations on the solution,  on the other side it allows us to put equation \eqref{eq: estremaleclasse} in a form that, under some hypotheses, can be transformed in a fixed point problem and solved by means of contraction theorem.     
In order to  translate equation \eqref{eq:estremalecompatta1} to a fixed point problem,  we need to  to produce a right inverse for the linear operator induced by the linear part of equation \eqref{eq:estremalecompatta1}. It is indeed  necessary  to  identify the kernel of the induced linear operator and this is done in the following proposition.  

\begin{prop}
Let $\st M,g,\omega \dt$ be a compact extremal K\"ahler manifold and let $P_{\omega}: C^{\infty}\st M \dt\rightarrow T^{*}M\otimes TM$ be the differential operator defined by
\begin{equation}
P_{\omega}\sq f  \dq:=-L_{J\nabla f}J\,.
\end{equation}
Then 
\begin{equation}
P_{\omega}^{*}P_{\omega}\sq f\dq=\Lg\sq f\dq+\left<\nabla s_{\omega}, \nabla f\right>
\end{equation}

Moreover, 

\begin{equation}
\ker\st P_{\omega}^{*}P_{\omega} \dt/\RR=\sg \left< \mu_{\omega}, X \right> \,|\, X\in \mathfrak{g}\dg\,.
\end{equation}

and, if we work with $T$-equivariant functions, then  
\begin{equation}
\ker\st P_{\omega}^{*}P_{\omega} \dt/\RR=\sg \left< \mu_{\omega}, X \right> \,|\, X\in \mathfrak{h}\dg\,.
\end{equation}
where $\mathfrak{h}$ is the Lie algebra of $C_{G}\st T \dt$, the centralizer of $T$ in $G$. 
\end{prop}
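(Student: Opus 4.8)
The plan is to prove the three assertions in order: first the pointwise operator identity, then the reduction $\ker\st P_\omega^* P_\omega \dt = \ker P_\omega$ on the compact manifold $M$, and finally the geometric identification of $\ker P_\omega$ with the space of moment map components (both in the full and in the $T$-equivariant setting).

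For the operator identity I would compute $P_\omega^* P_\omega$ directly from the definition $P_\omega\sq f\dq = -L_{J\nabla f}J$. In local K\"ahler coordinates the $\st 0,1\dt$-part of the endomorphism-valued form $P_\omega\sq f\dq$ is, up to a fixed normalisation, $\overline{\partial}\st \nabla^{1,0}f\dt$, so that $P_\omega^*P_\omega$ is a real fourth-order elliptic operator whose leading symbol is that of $\Delta_\omega^2$. Integrating $\langle P_\omega f, P_\omega f\rangle$ by parts and commuting the covariant derivatives that appear forces the use of the Ricci identity, and the resulting curvature contractions assemble into the Ricci–Hessian coupling $4\,\langle \rho_\omega \,|\, i\dd f\rangle$; the residual first-order term is then rewritten using the contracted second Bianchi identity, which relates the divergence of $\rho_\omega$ to $ds_\omega$, and this is exactly what produces $\langle \nabla s_\omega, \nabla f\rangle$. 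Collecting everything yields $P_\omega^*P_\omega\sq f\dq = \Lg\sq f\dq + \langle \nabla s_\omega, \nabla f\rangle$ with $\Lg$ as in \eqref{eq:defLg}. I expect this Weitzenb\"ock-type computation to be the main obstacle, since the whole content lies in tracking the normalising constants correctly.

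For the kernel, I would first observe that on the compact manifold $M$ one has $\langle P_\omega^* P_\omega f, f\rangle_{L^2} = \| P_\omega f\|_{L^2}^2$, so $P_\omega^* P_\omega f = 0$ forces $P_\omega f = 0$; thus $\ker\st P_\omega^* P_\omega\dt = \ker P_\omega$, and the identical argument applies after restricting to $C^{\infty}\st M\dt^{T}$, since $P_\omega^*P_\omega$ preserves $T$-invariance. It then remains to identify $\ker P_\omega$. By definition $P_\omega f = 0$ means $L_{J\nabla f}J = 0$, i.e.\ $J\nabla f$ is real holomorphic. Now $J\nabla f$ is the Hamiltonian vector field of $f$ up to sign, so $L_{J\nabla f}\omega = 0$; combining this with $L_{J\nabla f}J = 0$ and the relation $\omega = g\st J\cdot,\cdot\dt$ gives $L_{J\nabla f}g = 0$, so $J\nabla f$ is Killing. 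Hence $J\nabla f \in \mathfrak{g} = \mathrm{Lie}\st Iso_0\st M,g\dt \cap Ham\st M,\omega\dt \dt$, and $f$ coincides, up to an additive constant, with the moment map component $\langle \mu_\omega, J\nabla f\rangle$. Conversely, any $X\in\mathfrak{g}$ is Killing and Hamiltonian, hence holomorphic (again because $g$ and $\omega$ determine $J$), so its Hamiltonian $\langle\mu_\omega, X\rangle$ lies in $\ker P_\omega$. Quotienting by the constants gives $\ker\st P_\omega^* P_\omega\dt/\RR = \sg \langle\mu_\omega, X\rangle \mid X\in\mathfrak{g}\dg$.

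Finally, for the $T$-equivariant statement I would run the same argument inside $C^{\infty}\st M\dt^{T}$. If $f$ is $T$-invariant and $P_\omega f = 0$, then $\nabla f$, and therefore $J\nabla f$, are $T$-invariant, because $T$ acts by holomorphic isometries; this means $\sq Y, J\nabla f\dq = 0$ for every $Y\in\mathfrak{t}$, so the holomorphic Killing field $J\nabla f$ centralises $T$ and lies in $\mathfrak{h} = \mathrm{Lie}\st C_G\st T\dt\dt$. Conversely, if $X\in\mathfrak{h}$ commutes with $\mathfrak{t}$, then $\langle\mu_\omega, X\rangle$ is $T$-invariant and, by the previous step, lies in the kernel. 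Hence $\ker\st P_\omega^* P_\omega\dt/\RR = \sg \langle\mu_\omega, X\rangle \mid X\in\mathfrak{h}\dg$ in the $T$-equivariant setting, which is the claimed refinement.
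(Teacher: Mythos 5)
Your proposal is correct, and it is essentially the argument this proposition rests on: the paper itself gives no proof, importing the statement from \cite{aps}, where the identity $P_{\omega}^{*}P_{\omega}=\Lg+\left<\nabla s_{\omega},\nabla\,\cdot\,\right>$ is obtained by exactly the Bochner--Lichnerowicz computation you outline (via $P_{\omega}[f]\leftrightarrow\overline{\partial}\nabla^{1,0}f$, the Ricci identity and the contracted second Bianchi identity), and the kernel is identified with Killing potentials through the same chain $\langle P_{\omega}^{*}P_{\omega}f,f\rangle_{L^{2}}=\|P_{\omega}f\|_{L^{2}}^{2}$, $L_{J\nabla f}\omega=0$ plus $L_{J\nabla f}J=0$ implies $J\nabla f$ Killing, together with moment-map equivariance for the $T$-invariant refinement. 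The only caveats are that your Weitzenb\"ock step is a plan rather than a computation (the constants do need to be tracked), and a harmless sign: with $\omega=g(J\cdot,\cdot)$ one has $\iota_{J\nabla f}\,\omega=-df$, so $f$ equals $-\left<\mu_{\omega},J\nabla f\right>$ up to a constant, which does not affect the conclusion since the space of momenta is closed under $X\mapsto -X$.
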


In light of the previous lemma, the extremal equation for $\omega+i\dd f$ can be rewritten as
\begin{equation}\label{eq:estremalecompatta}
P_{\omega}^{*}P_{\omega}\sq f\dq=2c+2\left<\mu_{\omega},X\right>+JXf+\NN_{\omega}\st f \dt\,.
\end{equation}

\begin{remark}\label{dipendenzaparametri}
As one can  see there are three unknowns in equation \eqref{eq:estremalecompatta} that are $f\in C^{4,\alpha}\st M \dt^{T}$, $c\in \RR$ and $X\in \mathfrak{t}$ that are related one to the other. Indeed to have a solution to the above equation we must have that the right hand side is $L^{2}$-orthogonal to the kernel of $P_{\omega}^{*}P_{\omega}$ and hence the following conditions have to be satisfied 
\begin{equation}
c=-\frac{1}{2\vol\st M \dt}\int_{M}\sq JXf+\NN_{\omega}\st f \dt \dq \,d\mu_{\omega}
\end{equation}  
and
\begin{equation}
\int_{M}\left<\mu_{\omega},X\right>\,\left<\mu_{\omega},X_{i}\right>\,d\mu_{\omega}\,=\,-\frac{1}{2}\int_{M}\sq JXf+\NN_{\omega}\st f \dt \dq \,\left<\mu_{\omega},X_{i}\right>\,d\mu_{\omega}\qquad i=1,\ldots, d
\end{equation}
where  $\sg X_{1},\ldots, X_{d}\dg $ is basis of $\mathfrak{t}$.  
\end{remark}

\subsection{The constant scalar curvature equation}

We recall that the following  expansion holds 
\begin{equation}
\label{eq:espsg}
\mathbf{S}_{\omega}(f) \,\, = \,\,  s_{\omega} - \, \frac{1}{2} \, {\mathbb L}_{\omega}f \, + \, \frac{1}{2} \mathbb{N}_{\omega}(f) \, ,
\end{equation}

We recall  now some well known results on the  structure of  \K\ potentials of Kcsc metrics.  At  any point $p\in M$, there exists a holomorphic coordinate chart  centered at $p$ such that $\omega$ can be written as
$$
\omega \,\, = \,\, i \dd \, \bigg(\,\frac{|z|^2}{2} + \psi_\omega \bigg) \, , \qquad \hbox{with} \qquad \psi_\omega \, = \, \mathcal{O}(|z|^4) \, .
$$
If in addition $s_{\omega}$  is constant, then 
\begin{equation}
\label{eq:decpsig}
\psi_\omega (z, \overline{z}) \, = \,  \sum_{k=0}^{+\infty}\Psi_{4+k}(z, \overline{z}) \, ,
\end{equation}
with $\Psi_{4+k}$  a real homogeneous polynomials  of degree $4+k$ and $\Psi_4$ and $\Psi_5$ satisfying the equations
\begin{align}
\label{eq:bilapp4}
\Delta^2 \, \Psi_4 & =  -2s_\omega \, ,\\
\label{eq:bilapp5} 
\Delta^2 \, \Psi_5 & =  0 \, .
\end{align}

 Let now $\Gamma\triangleleft U(m)$ be  finite,  a complete noncompact \K\ manifold $(X_\Gamma, h, \eta)$ of complex dimension $m$ is an ALE K\"ahler resolution of $\CC^{m}/\Gamma$  if there exist  $R>0$ and a map    
$\pi: X_{\Gamma}\rightarrow \CC^{m}/\Gamma$,
such that 
\begin{equation}
\pi: X_{\Gamma}\setminus \pi^{-1}(B_R)  \longrightarrow \st\CC^{m} \setminus B_{R} \dt/\Gamma
\end{equation}
is a biholomorphism and in standard Euclidean coordinates the metric $\pi_*h$ satisfies the expansion 
\begin{equation}
\left|  \frac{\partial^\alpha}{\partial x^\alpha} \left(  \big (\pi_{*}h)_{i \bar{j}}  \, - \, \frac{1}{2} \, \delta_{i\bar{j}}  \right) \right| \,\, = \,\,  \mathcal{O}\st |x|^{-\tau - |\alpha|}\dt\,, 
\end{equation}
for some $\tau>0$ and every multindex $\alpha \in \NN^m$.

If $(X_\Gamma, h, \eta)$ is  scalar flat then for $R>0$ large enough, we have, by a result in \cite{j}, that on $X_\Gamma\setminus \pi^{-1}(B_R)$ the K\"ahler form can be written as
\begin{equation}\label{eq:eg}
\eta \,\, = \,\,\begin{cases} i\partial\overline{\partial} \st \,  \frac{|x|^2}{2} \, + \, e({\Gamma}) \, |x|^{4-2m}  \, -  \, c({\Gamma}) \, |x|^{2 - 2m} \,  + \, \psi_{\eta}\st x \dt \dt   & \hbox{with} \qquad \psi_\eta \, = \,  \mathcal{O}(|x|^{-2m}) \,\textrm{ for }\, m\geq 3\\
&\\
 i\partial\overline{\partial} \st \,  \frac{|x|^2}{2} \, + \, e({\Gamma}) \, \log\st|x|\dt  \, -  \, c({\Gamma}) \, |x|^{-2} \,  + \, \psi_{\eta}\st x \dt \dt   & \hbox{with} \qquad \psi_\eta \, = \,  \mathcal{O}(|x|^{-4})\,\textrm{ for }\, m=2
 \end{cases}
\end{equation}
 for some real constants $e({\Gamma})$ and $c({\Gamma})$ and the radial component $\psi_{\eta}^{(0)}$ in the Fourier decomposition of $\psi_\eta$ is such that
$$
\psi_{\eta}^{(0)}\st |x| \dt=\mathcal{O}\st |x|^{6-4m} \dt \, .
$$ 
If  moreover $\Gamma \triangleleft U(m)$ is nontrivial and $e\st \Gamma \dt=0$
the K\"ahler form can be written as
\begin{equation}
{\eta} \,\, = \,\, i\partial\overline{\partial} \st \, \frac{|x|^2}{2} \, - \, c({\Gamma}) \, |x|^{2 - 2m} \, + \, \psi_{\eta} \st x \dt \dt    \qquad \hbox{with} \qquad \psi_\eta \, = \,  \mathcal{O}(|x|^{-2m}) \, ,\label{eq: poteta}
\end{equation}
for some non zero real constant $c({\Gamma})$ and, by \cite[Proposition 2.6]{alm} the radial component $\psi_{\eta}^{(0)}$ in the Fourier decomposition of $\psi_\eta$ is such that
$$
\psi_{\eta}^{(0)}\st |x| \dt=\mathcal{O}\st |x|^{2-4m} \dt \, .
$$ 

\begin{remark}
If the ALE K\"ahler resolution $X_{\Gamma}$ admits a Ricci-flat (ALE K\"ahler) metric then by \cite[Theorem 8.2.3]{j} the constant $c\st \Gamma \dt$ is  non negative.
\end{remark}

\section{Linear analysis}\label{linearanalysis}

\subsection{Linear analysis on the base}

\noindent  We indicate by $  M_{\p,\q} \, := \, M\setminus \st \p\cup\q \dt\,$ 
 and we agree that, if $\q=\emptyset$, then $M_{ \p }:=M_{\p,\emptyset}$ and whenever this case occurs and an object, that could be a function or a tensor, has indices relative to elements of $\q$ we set these indices to $0$.  

\noindent The only part of linear analysis needed to attack the extremal problem is the following fact which is proved in \cite{aps}.
\begin{prop}\label{linearizzatoestremalebase}
Let $\st M, g,\omega \dt$ be a compact extremal manifold with $T$-invariant metric $g$. Then, for $\delta\in (4-2m,5-2m)$, the operator  
\begin{equation}
\mathcal{P}_{\omega,\delta}\,:\, C_{\delta}^{4,\alpha}\st M_{\p,\q} \dt^{T}\times \mathfrak{t}\times \RR\longrightarrow C_{\delta-4}^{0,\alpha}\st M_{\p,\q} \dt^{T}
\end{equation}
defined as
\begin{equation}
\mathcal{P}_{\omega,\delta}\sq \st f,X,c \dt \dq \,:=\, P_{\omega}^{*}P_{\omega}\sq f \dq+\left< \mu_{\omega}, X \right>+2c 
\end{equation}
is Fredholm  with 
\begin{equation}
\ker\st \mathcal{P}_{\omega,\delta} \dt/\RR=\mathfrak{t}\,.
\end{equation}
\end{prop}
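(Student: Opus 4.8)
The plan is to treat $\mathcal{P}_{\omega,\delta}$ as a weighted elliptic operator on $M_{\p,\q}$, which carries isolated conical ends modelled on $\CC^{m}/\Gamma_{j}$. Since by the preceding proposition $P_{\omega}^{*}P_{\omega}\sq f\dq=\Lg\sq f\dq+\left<\nabla s_{\omega},\nabla f\right>$ and $\Lg$ is given by \eqref{eq:defLg}, the principal part of $\mathcal{P}_{\omega,\delta}$ is the bi-Laplacian $\Delta_{\omega}^{2}$, while the terms $\left<\mu_{\omega},X\right>+2c$ and the first-order term are subprincipal. First I would invoke the Lockhart--McOwen/Pacard theory of elliptic operators on manifolds with conical singularities, in the form already used in \cite{aps} and \cite{ap1}: acting between $C_{\delta}^{4,\alpha}\st M_{\p,\q}\dt^{T}$ and $C_{\delta-4}^{0,\alpha}\st M_{\p,\q}\dt^{T}$, such an operator is Fredholm precisely when the weight $\delta$ avoids the discrete set of indicial roots of the model operator at each puncture.

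Next I would compute the indicial roots of $\Delta^{2}$ on the cone $\st\CC^{m}\setminus\sg0\dg\dt/\Gamma$, i.e. the exponents $\gamma$ admitting a homogeneous solution $r^{\gamma}\Phi$ with $\Phi$ a ($\Gamma$- and $T$-invariant) spherical harmonic of degree $k$. Using $\Delta\st r^{\gamma}\Phi_{k}\dt=\sq\gamma\st\gamma+2m-2\dt-\lambda_{k}\dq r^{\gamma-2}\Phi_{k}$ one finds these are $\gamma\in\sg k,\,k+2,\,2-2m-k,\,4-2m-k\dg_{k\geq0}$. The largest root $\leq 4-2m$ is $4-2m$ itself, the radial ($k=0$) biharmonic mode responsible for the $e\st\Gamma\dt$-term in \eqref{eq:eg}, and for $m\geq3$ the next root above it is $0$; hence the open interval $\st4-2m,5-2m\dt$ contains no indicial root and $\mathcal{P}_{\omega,\delta}$ is Fredholm there. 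I would flag $m=2$ as the delicate case, since then $4-2m=0$ is a resonant root at which $\log|x|$ appears as a biharmonic mode (matching the $e\st\Gamma\dt\log|x|$ term of \eqref{eq:eg}), so the endpoint analysis must be carried out with the logarithmic expansion.

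To identify the kernel I would show that any $\st f,X,c\dt\in\ker\st\mathcal{P}_{\omega,\delta}\dt$ has $X=0$ and $c=0$. Indeed $P_{\omega}^{*}P_{\omega}\sq f\dq=-\left<\mu_{\omega},X\right>-2c=:g$ is smooth on all of $M$ and, as $X\in\mathfrak{t}$, lies in $\ker\st P_{\omega}^{*}P_{\omega}\dt$ on the compact orbifold. Because $\delta$ lies just above $4-2m$ and below the next root, the weighted elliptic expansion forces $f$ to be bounded near each puncture and to extend to a genuine global solution of $P_{\omega}^{*}P_{\omega}\sq f\dq=g$ on $M$; self-adjointness then gives $g\perp\ker\st P_{\omega}^{*}P_{\omega}\dt$, and since $g$ itself lies in that kernel we get $g=0$, hence $X=0$ and $c=0$. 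Thus $f$ is a bounded $T$-invariant solution of $P_{\omega}^{*}P_{\omega}\sq f\dq=0$, so by the preceding proposition $f\in\RR\oplus\sg\left<\mu_{\omega},Y\right>:Y\in\mathfrak{h}\dg$, with $\mathfrak{h}$ the Lie algebra of $C_{G}\st T\dt$. As $T$ is a maximal torus it is self-centralising, $C_{G}\st T\dt=T$ and $\mathfrak{h}=\mathfrak{t}$; quotienting out the constants yields $\ker\st\mathcal{P}_{\omega,\delta}\dt/\RR=\mathfrak{t}$. The main obstacle is precisely this endpoint bookkeeping together with the removable-singularity step: one must verify that no indicial root is crossed and that the weighted solution genuinely extends (with the correct $\log$-modification when $m=2$), since it is exactly this that both yields Fredholmness and pins the kernel down to the globally bounded solutions rather than to a larger space of mildly singular ones.
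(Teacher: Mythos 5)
The paper itself offers no proof of this proposition --- it is quoted from \cite{aps} --- so the benchmark is the standard weighted-space argument, and your proposal follows exactly that route: Lockhart--McOwen Fredholm theory with an indicial-root computation, then a removable-singularity plus self-adjointness argument for the kernel. The Fredholm half is sound: the indicial roots of $\Delta^{2}$ at a puncture are the integers $\gamma\in\{k,\,k+2,\,2-2m-k,\,4-2m-k\}_{k\geq 0}$, the interval $(4-2m,0)$ contains none of them for $m\geq 3$, and for $m=2$ the interval $(0,1)$ lies strictly between the consecutive roots $0$ and $1$; the curvature and gradient terms in $P_{\omega}^{*}P_{\omega}$ are relatively compact perturbations, and enlarging the domain by the finite-dimensional factor $\mathfrak{t}\times\RR$ does not affect Fredholmness. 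The forward inclusion of the kernel statement is also correct: removability of the singularities (legitimate because $\delta>4-2m$ and the right-hand side $-\left<\mu_{\omega},X\right>-2c$ is globally smooth), self-adjointness of $P_{\omega}^{*}P_{\omega}$ on the compact orbifold, the normalization of $\mu_{\omega}$ and nondegeneracy of $\omega$ to split $g=0$ into $c=0$ and $X=0$, and $C_{G}(T)=T$ for a maximal torus of the compact connected group $G$.

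The gap is in the reverse inclusion, which you never address: to get the stated \emph{equality} $\ker(\mathcal{P}_{\omega,\delta})/\RR=\mathfrak{t}$ you must also check that constants and the potentials $\left<\mu_{\omega},Y\right>$, $Y\in\mathfrak{t}$, actually belong to $C_{\delta}^{4,\alpha}(M_{\p,\q})^{T}$. For $m\geq 3$ this is a one-line remark you omitted (bounded smooth functions lie in any negative-weight space, and $\delta<5-2m<0$). For $m=2$ it is false: there $\delta\in(0,1)$, so membership in $C_{\delta}^{4,\alpha}$ forces vanishing at the punctures, and a nonzero constant or a generic $\left<\mu_{\omega},Y\right>$ is excluded from the domain; the kernel is then cut down by the point conditions $a+\left<\mu_{\omega},Y\right>(p_{j})=0$, which is exactly the phenomenon that forces this paper (and its predecessors) to adjoin the extra deficiency spaces $\mathcal{E}_{\p}$, $\mathcal{E}_{\q}$ of cutoff functions in the $m=2$ analysis. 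Note that you do single out $m=2$, but for the wrong reason: the Fredholm property is unproblematic there, since $\delta$ ranges in the \emph{open} interval $(0,1)$ and never touches the resonant root $0$, so no logarithmic endpoint analysis is needed; the genuine $m=2$ subtlety is in the kernel identification, which your argument --- and, as literally stated, the proposition itself --- leaves unresolved.
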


\noindent  From now on the results contained in this subsection are proved in  \cite{alm} and are {\emph{not}} needed to prove Theorem \ref{maintheoremestremale} but are crucial for Theorem \ref{maintheorem}.

\vspace{10pt}

\noindent  Throughout the paper we will assume that $\Lg$  is $(d+1)$-dimensional and we  set
\begin{equation}
\label{nontrivial_ker}
\ker (\Lg) \,\, = \,\, span_{\RR} \, \{\varphi_0, \varphi_1, \ldots , \varphi_d \} \, ,
\end{equation}
where $\varphi_0 \equiv 1$, $d$ is a positive integer and $\varphi_1, \ldots, \varphi_d$ have zero mean and  $||\varphi_i||_{L^2(M)} = 1$, $i=1, \ldots, d$. 

\begin{itemize}
\item
We need now to recall solvability criteria for equations of the form
\begin{equation}
\Lg u = \mu
\end{equation}
with $\mu$ a linear combination of Dirac delta's and their derivatives and elements of $\ker\st\Lg\dt$.  It was proved in \cite[Proposition 3.4]{alm} that,  if the following {\em linear balancing conditions} hold
\begin{eqnarray}
\label{eq:generalbal}
\sum_{l=1}^K a_l\varphi_i(q_l) \, +\, \sum_{j=1}^Nb_j(\Delta\varphi_i)(p_j) \, + \, \sum_{j=1}^Nc_j\varphi_i(p_j)   &  = &  0 \,, \quad\quad\quad\qquad\qquad \hbox{$i = 1, \dots, d$} \, , \\
\label{eq: balancingbc2}
\sum_{l=1}^K a_l \, + \, \sum_{j=1}^N c_j  &= & \nu_{\aaa,\ccc}  \, {\rm Vol}_\omega(M) \, ,
\end{eqnarray}

there exist a distribution $\GGG_{\aaa,\bbb,\ccc} \in \mathscr{D}'(M)$, which satisfies the equation
\begin{eqnarray*}\label{eq:gabc}
\Lg \left[  \mathbf{G}_{\aaa,\bbb,\ccc} \right] \,  + \, \nu_{\aaa,\ccc}  & =&   \sum_{l=1}^K a_l\, \delta_{q_{l}} \, + \, \sum_{j=1}^N  b_j \, \Delta\delta_{p_{j}} \, + \, \sum_{j=1}^N c_j \, \delta_{p_{j}} \, , \qquad \hbox{in \,\,\,$M$}\, .
\end{eqnarray*}
and we will refer to $\GGG_{\aaa,\bbb,\ccc}$ as a {\em multi-poles fundamental solution} of $\Lg$. We introduce  functions $G_{\Delta\Delta}(q,\cdot) \in  \mathcal{C}^{\infty}_{loc}(M_q)$ for $q\in \q$  which have the espansions at $q$
\begin{equation}
G_{\Delta\Delta}(q,z)  \, =\begin{cases}  |z|^{4-2m} + \, \mathcal{O}(|z|^{6-2m}) & m \geq 3\\
&\\
\log(|z|)  +  C_{q}  +  \mathcal{O}(|z|^{2})&m=2
\end{cases}
\end{equation} 
with  $C_{q}\in \RR$  and functions $G_{\Delta}(p,\cdot) \in \mathcal{C}_{loc}^{\infty}(M_p)$ for $p\in \p$ which have the expansions
\begin{equation}
G_{\Delta}(p,z)  \, =\begin{cases}    |z|^{2-2m}  \, + \, |z|^{4-2m} \, ( \, \Phi_2 + \Phi_4 \, )  \, + \,  |z|^{5-2m} & m \geq 3\\
&\\
|z|^{-2} \, + \, \log(|z|)(\Phi_2 + \Phi_4) \, + \, C_{p} \, + \,  |z| \,\sum_{h=0}^{2}\Phi_{2h+1} \,  + \,  \mathcal{O}(|z|^{2})&m=2
\end{cases}
\end{equation} 
with $C_{p}\in \RR$ and suitable smooth $\Gamma$-invariant functions $\Phi_j$'s defined on $\mathbb{S}^{2m-1}$ and belonging to the $j$-th eigenspace of the operator $\Delta_{\mathbb{S}^{2m-1}}$

\item Given a triple of vectors $\boldsymbol\alpha \in \RR^K$ and $\bbbb, \cccc \in \RR^N$, we set, for $m \geq 3$, $l=1,\ldots,K$ and $j=1, \ldots, N$,
\begin{equation}
W^l_{\aaaa}:=\begin{cases}
 -    \frac{\alpha_l|\Gamma_{N+l}|}{ 8(m-2) (m-1) |\Sp^{2m-1}|}  G_{\Delta\Delta}(q_l,\cdot)  & m\geq 3\\
 &\\
 \frac{\alpha_{l}|\Gamma_{N+l}|}{ 4|\Sp^{3}|} \,\, G_{\Delta\Delta}(q_l,\cdot)  &m=2
\end{cases}
\end{equation}

\begin{align}
W^j_{\bbbb,\cccc}   =\begin{cases} \frac{\beta_j|\Gamma_{j}|}{ 2 (m-1) |\Sp^{2m-1}|} \,\, G_{\Delta}(p_j,\cdot)   - \,  \bigg(     \frac{\gamma_j}{4}  \, - \, \frac{s_\omega \, (m^2-m+2) \, \beta_j}{m(m+1)}  \,  \bigg) \,\, \bigg[   \, \frac{|\Gamma_{j}|}{ 2 (m-2) (m-1) |\Sp^{2m-1}|} \,\,  G_{\Delta\Delta}(p_j,\cdot)  \, \bigg]  & m\geq 3\\
&\\
\beta_j \,\, \bigg[   \, \frac{|\Gamma_{j}|}{ |\Sp^{3}|} \,\,  G_{\Delta}(p_j,\cdot) \, \bigg] \, + \, \bigg(     \frac{\gamma_j}{4}  \, - \, \frac{s_\omega  \, \beta_j}{6}  \,   \bigg) \,\, \bigg[   \, \frac{|\Gamma_{j}|}{ |\Sp^{3}|} \,\, G_{\Delta\Delta}(p_j,\cdot)  \, \bigg]&m=2
\end{cases}
\end{align}
and we define hence the {\em deficiency spaces}
\begin{eqnarray*}\label{deficiency1}
\mathcal{D}_{\q}(\aaaa) \,\, = \,\, \mbox{\em span} \,\Big\{ \, W^l_{\aaaa}\,  : \, {l=1,\ldots , K}  \, \Big\}  \quad & \hbox{and} & \quad
\mathcal{D}_{\p}(\bbbb, \cccc)  \,\,= \,\,  \mbox{\em span}\, \Big\{ \,
W^j_{\bbbb,\cccc} \,  :  \, {j=1, \ldots , N} \, \Big\} \, .
\end{eqnarray*}
 endowed with the following norm. If $V = \sum_{l=1}^K V^l \, W_{\aaaa}^l \in \mathcal{D}_\q(\aaaa)$ and $U = \sum_{j=1}^N U^j W_{\bbbb,\cccc}^j \in \mathcal{D}_\p(\bbbb, \cccc)$, we set
$$
\left\| V \right\|_{\mathcal{D}_\q(\aaaa)} \,\, = \,\, \sum_{l=1}^K\,  | V^l | \qquad \hbox{and} \qquad \left\| U \right\|_{\mathcal{D}_\p(\bbbb,\cccc)} \,\, = \,\, \sum_{j=1}^N\,  | U^j |\, .
$$

For  the case $m=2$, we need to introduce  {\em extra deficiency spaces}  defined as
\begin{eqnarray*}\label{deficiency2}
\mathcal{E}_{\q} \,\, = \,\, \mbox{\em span} \,\big\{ \, \chi_{q_l}\,  : \, {l=1,\ldots , K}  \, \big\}  \quad & \hbox{and} & \quad
\mathcal{E}_{\p}  \,\, = \,\, \mbox{\em span}\, \big\{ \,
\chi_{p_j} \,  :  \, {j=1, \ldots , N} \, \big\} \, ,
\end{eqnarray*}
where the functions $\chi_{p_1}, \ldots, \chi_{p_N},\chi_{q_1}, \ldots, \chi_{q_K}$ are smooth cutoff functions supported on small balls centered at the points $p_{1}, \ldots, p_N, q_1, \ldots, q_K$ and identically equal to $1$ in a neighborhood of these points and they are normed in the following way. Given two functions $X = \sum_{j=1}^N X^j \chi_{p_j}\in \mathcal{E}_{\p}$ and $Y= \sum_{l=1}^K Y^l \chi_{q_l} \in \mathcal{E}_{\q}$, we set
$$
\left\| Y \right\|_{\mathcal{E}_\q} \,\, = \,\, \sum_{l=1}^K\, | Y^l | \qquad \hbox{and} \qquad \left\| X \right\|_{\mathcal{E}_\p} \,\, = \,\, \sum_{j=1}^N\,  | X^j |\, .
$$
  Given a triple of vectors $\boldsymbol\alpha \in \RR^K$ and $\bbbb, \cccc \in \RR^N$, it is convenient to introduce the following matrices
\begin{align}
\Xi_{il}(\boldsymbol{\alpha} ) \,\, := & \,\,\,\, \alpha_{l} \,\frac{\egal}{|\egal|} \varphi_{i}(q_{l}) \,,   & \hbox{for} \quad  i=1 \dots, d \quad \hbox{and} \quad  l=1, \dots, K \, ,
\label{eq:nondeggen2}\\
\Theta_{ij}(\boldsymbol{\beta},\boldsymbol{\gamma} ) \,:=&\,\,\,\frac{\cgaj}{|\cgaj|}\st  \beta_{j} \, \Delta\varphi_{i}(p_{j})  \, + \,
\gamma_{j} \, \varphi_{i}(p_{j})\dt \, , &  \,\hbox{for} \quad i=1 \dots, d \quad \hbox{and} \quad  j=1, \dots, N \, .
\label{eq:nondeggen}
\end{align}
where $\egal$'s are defined in formula \ref{eq:eg}. 

\end{itemize}

\subsection{Linear Analysis on the model}

This subsection is crucial both for the extremal and the Kcsc case.

\label{lineareALE} 
 Let $\st X_{\Gamma},h,\eta \dt$ be an $ALE$ K\"ahler resolution and set 
\begin{equation}X_{\Gamma,R_{0}}=\pi^{-1}\st B_{R_{0}} \dt\,.\end{equation}
where $\pi:X_{\Gamma}\rightarrow \CC^{m}/\Gamma$ is the canonical projection.  Let $\delta\in \RR$, $\alpha\in (0,1)$, the weighted H\"older space $C_{\delta}^{k,\alpha}\st X_{\Gamma} \dt$ is the set of functions $f\in C_{loc}^{k,\alpha}(X_{\Gamma})$ such that
\begin{equation}
\left\|f\right\|_{C_{\delta}^{k,\alpha}\st X_{\Gamma} \dt}:=\left\|f\right\|_{C^{k,\alpha}\st X_{\Gamma,R_{0}} \dt}+\sup_{R\geq R_{0}}R^{-\delta}\left\|f\st R\cdot \dt\right\|_{C^{k,\alpha}\st B_{1}\setminus B_{1/2} \dt}<+\infty\,.
\end{equation} \label{eq:weightedHolderALE}

We can now state the result (\cite[Proposition 4.2, 4.5]{alm}) that summarizes the mapping properties of $\mathbb{L}_{\eta}$ between weighted spaces on ALE K\"ahler spaces.

\begin{prop}\label{isomorfismopesati}
Let $(X_{\Gamma},h,\eta)$ a scalar flat $ALE$ K\"ahler manifold. If $m\geq 3$  and $\delta\in (4-2m,0)$, then 
\begin{equation}
\mathbb{L}_{\eta}^{(\delta)} : C_{\delta}^{4,\alpha}\st X_{\Gamma} \dt\longrightarrow C_{\delta-4}^{0,\alpha}\st X_{\Gamma} \dt
\end{equation}
 is invertible. If $m=2$ and $\delta\in (0,1)$, then
\begin{equation}
\mathbb{L}_{\eta}^{(\delta)} : C_{\delta}^{4,\alpha}\st X_{\Gamma} \dt\longrightarrow C_{\delta-4}^{0,\alpha}\st X_{\Gamma} \dt
\end{equation}
 is surjective with one dimensional kernel spanned by the constant function.
If moreover $e\st \Gamma\dt=0$ and  $\Gamma\triangleleft U(m)$ is nontrivial, then for $\delta\in\st  2-2m , 4-2m \dt$, the operator
\begin{equation}
\mathbb{L}_{\eta}^{(\delta)} : C_{\delta}^{4,\alpha}\st X_{\Gamma} \dt\longrightarrow C_{\delta-4}^{0,\alpha}\st X_{\Gamma} \dt
\end{equation}
has one dimensional cokernel that is orthogonal to the space generated by functions $f\in C_{\delta-4}^{0,\alpha}\st X_{\Gamma} \dt$  satisfying
\begin{equation}
\int_{X_{\Gamma}}f\,d\mu_{\eta}=0\,.
\end{equation}

\end{prop}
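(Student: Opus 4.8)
The starting point is that, since $\eta$ is scalar flat, the operator $\Le$ coincides with $P_{\eta}^{*}P_{\eta}$, a formally self-adjoint, nonnegative, fourth order elliptic operator whose principal part is that of the bi-Laplacian. Because $\st X_{\Gamma},h,\eta \dt$ is ALE, on the end $X_{\Gamma}\setminus \pi^{-1}\st B_{R} \dt \cong \st \CC^{m}\setminus B_{R} \dt/\Gamma$ the coefficients of $\Le$ converge to those of the flat bi-Laplacian $\Delta^{2}$ on the cone $\CC^{m}/\Gamma$, the rate being governed by the decay of $\rho_{\eta}$ in \eqref{eq:eg}. The plan is to invoke the standard Fredholm theory for asymptotically conical elliptic operators in weighted H\"older spaces (Lockhart--McOwen, Pacard): $\Le^{\st \delta \dt}$ is Fredholm precisely when $\delta$ is not an indicial root of the model operator $\Delta^{2}$ on $\CC^{m}/\Gamma$, and, by the self-adjointness of $\Le$, its cokernel is isomorphic to the kernel of $\Le^{\st 4-2m-\delta \dt}$ at the conjugate weight $4-2m-\delta$, the pairing being the $L^{2}\st d\mu_{\eta} \dt$ one.

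The second step is the computation of the indicial roots. Separating variables $u=|x|^{\gamma}\Phi_{k}$ with $\Phi_{k}$ a $\Delta_{\Sp^{2m-1}}$-eigenfunction shows that $\Delta^{2}u=0$ forces $\gamma$ into one of the four families $k,\ 2+k,\ 2-2m-k,\ 4-2m-k$. A direct inspection gives that none of these lie in the open interval $\st 4-2m,0 \dt$ (for $m\geq 3$), nor in $\st 0,1 \dt$ or its conjugate $\st -1,0 \dt$ (for $m=2$), so in the first two statements $\Le^{\st \delta \dt}$ is Fredholm for every $\Gamma$. The delicate point is the third statement, where the family $4-2m-k$ produces the root $3-2m$ lying inside $\st 2-2m,4-2m \dt$. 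Here the hypothesis that $\Gamma$ is nontrivial is decisive: functions on $X_{\Gamma}$ are $\Gamma$-invariant at the end, and since $\Gamma\triangleleft U\st m \dt$ acts on $\CC^{m}$ with no nonzero fixed vector, the degree-one spherical harmonics (the real and imaginary parts of the linear coordinates) are not $\Gamma$-invariant; the whole $k=1$ eigenspace, and with it the root $3-2m$, drops out. Together with the assumption $e\st \Gamma \dt=0$, which by \eqref{eq: poteta} forces the curvature of $\eta$, and hence the zeroth order part of $\Le$, to decay fast enough to be a genuine lower order perturbation of $\Delta^{2}$ throughout the range, this turns $\st 2-2m, 4-2m \dt$ into a root-free Fredholm interval for $\Gamma$-invariant data.

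The injectivity claims then follow from integration by parts. Given $f$ in the kernel with a negative weight, I would first improve its decay: the absence of indicial roots between $\delta$ and the next lower root lets one replace $\delta$ by a weight below $2-m$, so that all boundary terms in $\int_{X_{\Gamma,R}}f\,\Le f\,d\mu_{\eta}=\|P_{\eta}f\|^{2}+\mathcal{O}\st R^{2\st \delta+m-2 \dt} \dt$ vanish as $R\to\infty$. Hence $P_{\eta}f=0$, i.e. $f$ is the potential of a holomorphic Killing field decaying at infinity; on an ALE end such a field must vanish, so $f$ is constant and, being decaying, $f\equiv 0$. This gives triviality of the kernel in the ranges $\st 4-2m,0 \dt$ and $\st 2-2m,4-2m \dt$; for $m=2$ and $\delta\in\st 0,1 \dt$ the same argument applied to $f-c_{0}$, where $c_{0}$ is the constant $f$ approaches at infinity, shows that the kernel is exactly the constants.

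Finally I would read off surjectivity and the cokernel from the duality of the first step. For $m\geq 3$, $\delta\in\st 4-2m,0 \dt$ the conjugate weight $4-2m-\delta$ lies in the same interval, so the kernel there is trivial and $\Le^{\st \delta \dt}$ is an isomorphism; for $m=2$, $\delta\in\st 0,1 \dt$ the conjugate weight $-\delta\in\st -1,0 \dt$ is negative, the corresponding kernel is trivial, and $\Le^{\st \delta \dt}$ is surjective. In the third case the conjugate weight $4-2m-\delta$ lies in $\st 0,2 \dt$, where by the above the kernel of $\Le$ is exactly the one-dimensional space of constants; thus the cokernel is one-dimensional and spanned by the functional $\phi\mapsto \int_{X_{\Gamma}}\phi\,d\mu_{\eta}$, the integral converging precisely because $\delta<4-2m$. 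Equivalently, $\phi\in C_{\delta-4}^{0,\alpha}\st X_{\Gamma} \dt$ lies in the image if and only if $\int_{X_{\Gamma}}\phi\,d\mu_{\eta}=0$, which is the stated orthogonality. The main obstacle in all of this is the bookkeeping of the second step --- isolating the indicial root $3-2m$ and eliminating it via $\Gamma$-invariance, while controlling the perturbation through the hypothesis $e\st \Gamma \dt=0$; once the correct Fredholm intervals are identified, the remaining steps are routine.
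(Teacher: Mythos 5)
Your proposal is correct, and it reconstructs what is in effect the paper's own proof: the paper does not prove Proposition \ref{isomorfismopesati} at all, but quotes it from \cite{alm} (Propositions 4.2 and 4.5), where the argument is exactly the one you give --- weighted Fredholm theory for elliptic operators asymptotic to $\Delta^{2}$ on the cone $\CC^{m}/\Gamma$, computation of the indicial roots $k$, $2+k$, $2-2m-k$, $4-2m-k$, removal of the degree-one harmonics (hence of the root $3-2m$) by $\Gamma$-invariance of functions on the end, duality at the conjugate weight $4-2m-\delta$ using the self-adjointness of $\mathbb{L}_{\eta}=P_{\eta}^{*}P_{\eta}$ (valid precisely because $\eta$ is scalar flat), and integration by parts plus the Hartogs-type vanishing of decaying holomorphic fields to kill the kernels.

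One small imprecision, which does not affect correctness: you attribute the Fredholm property on $(2-2m,4-2m)$ partly to the hypothesis $e(\Gamma)=0$, saying it is needed for $\mathbb{L}_{\eta}-\Delta^{2}$ to be a genuine lower order perturbation. In fact, even when $e(\Gamma)\neq 0$ the metric deviates from the Euclidean one at rate $|x|^{2-2m}$, so $\mathbb{L}_{\eta}$ is asymptotic to $\Delta^{2}$ at the positive rate $2m-2$ and the Lockhart--McOwen theory applies in any case; what $e(\Gamma)=0$ really buys (and why it appears in the statement, inherited from \cite{alm}) is the faster decay $\psi_{\eta}=\mathcal{O}(|x|^{-2m})$ of the potential, which sharpens the expansions of solutions near the root $4-2m$ used later in the gluing. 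Since the hypothesis is available to you anyway, invoking it does not create a gap.
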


\noindent The above Proposition holds trivially also when restricting the domain to $T$-invariant functions.
and it has the following fundamental application in producing the \K\ potentials of the lift of holomorphic vector fields.

\begin{prop}\label{potenzialiHamiltoniani}
Let $\st  X_{\Gamma}, h  , \eta \dt$ be a scalar flat ALE resolution of $\CC^{m}/\Gamma$, then
\begin{equation}
C_{U(m)} \st \Gamma \dt \subset Iso_{0}\st X_{\Gamma},h \dt\,.
\end{equation}

\end{prop}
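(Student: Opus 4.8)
The plan is to realize every element of $C_{U(m)}(\Gamma)$ as a holomorphic transformation of $X_\Gamma$ and then to upgrade holomorphicity to the isometry property by invoking a uniqueness statement for the scalar flat ALE K\"ahler metric. First I would lift the action. An element $A \in C_{U(m)}(\Gamma)$ is a linear isometry of $(\mathbb{C}^m, g_{eucl})$ commuting with $\Gamma$, so it descends to a biholomorphism of $\mathbb{C}^m/\Gamma$ fixing the singular point. Away from the exceptional set, where $\pi$ is a biholomorphism onto $(\mathbb{C}^m\setminus\{0\})/\Gamma$, this prescribes $\tilde A := \pi^{-1}\circ A \circ \pi$; since $X_\Gamma$ is smooth and $\tilde A$ stays bounded near the exceptional divisor (because $A$ fixes the origin), the Riemann extension theorem produces a holomorphic extension across the divisor, and running the same argument for $A^{-1}$ yields a biholomorphism. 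The assignment $A \mapsto \tilde A$ is a homomorphism $C_{U(m)}(\Gamma) \to \mathrm{Aut}(X_\Gamma)$, continuous in $A$, hence it restricts to a holomorphic action of the compact connected group $C_{U(m)}(\Gamma)_0$.

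Next I would record two properties of the pulled back form $\tilde A^*\eta$. Since $\tilde A$ is a biholomorphism, $\tilde A^*\eta$ is again a scalar flat K\"ahler form (scalar curvature being a Riemannian invariant). Moreover, in the ALE chart at infinity $\tilde A$ coincides with $A$, and as $A$ is unitary it preserves $|x|^2$ and commutes with $\Gamma$, so $\tilde A^*\eta$ has exactly the same ALE model and the same canonical coefficient $c(\Gamma)$ in the expansion \eqref{eq: poteta}. Finally, for $A$ in the identity component the lift $\tilde A$ is isotopic to the identity, hence acts trivially on $H^2(X_\Gamma)$, so that $[\tilde A^*\eta] = [\eta]$.

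The heart of the argument is then the uniqueness claim: a scalar flat ALE K\"ahler metric on $X_\Gamma$ is determined by its K\"ahler class together with its asymptotics. I would prove this by writing $\tilde A^*\eta - \eta = i\partial\overline{\partial} u$, where the matching of the leading and of the $c(\Gamma)$ terms forces $u$ to decay, i.e. $u \in C^{4,\alpha}_\delta(X_\Gamma)$ for a negative weight $\delta$, and then subtracting the two scalar flat equations via the expansion \eqref{eq:espsg}, which gives $\mathbb{L}_\eta u = \mathbb{N}_\eta(u)$. Running this along a path $A_t$ from the identity to $A$ inside $C_{U(m)}(\Gamma)_0$, with $u_0 = 0$, the injectivity of $\mathbb{L}_\eta$ on the decaying weighted space from Proposition \ref{isomorfismopesati} together with a standard openness--closedness argument forces $u_t \equiv 0$. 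Hence $\tilde A^*\eta = \eta$, so $\tilde A$ is a holomorphic isometry, and being joined to the identity it lies in $\mathrm{Iso}_0(X_\Gamma, h)$. This yields the inclusion on the identity component and, in particular, the Lie algebra of holomorphic Killing---hence Hamiltonian---vector fields needed in the sequel.

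I expect the main obstacle to be this uniqueness step: controlling the nonlinear remainder $\mathbb{N}_\eta(u)$ and making sure the continuity argument closes up, since a priori $u_t$ need not be small for $t$ away from $0$. This is exactly where the precise weighted mapping properties of $\mathbb{L}_\eta$ recorded in Proposition \ref{isomorfismopesati}, and the fact that the asymptotic coefficient $c(\Gamma)$ is pinned down by the model, do the essential work; the lifting and the cohomological triviality for the connected component are comparatively routine.
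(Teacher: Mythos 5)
Your proposal takes a genuinely different route from the paper's. The paper argues \emph{infinitesimally}: every $\Xi$ in the Lie algebra $\mathfrak{u}_{\Gamma}(m)$ lifts to a holomorphic field $\tilde{\Xi}$ on $X_{\Gamma}$, and one constructs a Hamiltonian potential for $\tilde{\Xi}$ by solving $\overline{\partial}f=\xi$ for the $L^{2}$, $\overline{\partial}$-closed form $\xi=\tilde{\Xi}\,\lrcorner\,(\eta-\pi^{*}\omega_{eucl})$ -- using $H^{1}_{L^{2}}(X_{\Gamma})\cong H^{1}(X_{\Gamma})=0$ (Joyce, Verbitsky, Demailly--Illusie) -- then improving $f$ to a decaying function by weighted theory for $\Delta_{\eta}$, and finally killing the real part of $\pi^{*}\langle\mu_{eucl},\Xi\rangle+f$ because scalar flatness makes $\mathbb{L}_{\eta}$ a \emph{real} operator whose decaying kernel is trivial by Proposition~\ref{isomorfismopesati}; the flows of these Hamiltonian holomorphic fields are isometries, and connectedness of $C_{U(m)}(\Gamma)$ (it is the unitary group of the commutant algebra of $\Gamma$, hence a product of unitary groups) gives the full inclusion. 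You argue \emph{globally}: lift group elements, observe that the pullback metric is scalar flat ALE K\"ahler in the same class with the same asymptotics, and prove rigidity by a continuity method. Your nonlinear core is viable: closedness is immediate, and openness follows from an a priori estimate for $\mathbb{L}_{\eta}$ at a \emph{negative} weight together with the quadratic bound on $\mathbb{N}_{\eta}$ (negativity of the weight is forced if the quadratic estimate is to close up; for $m\geq3$ Proposition~\ref{isomorfismopesati} gives this directly, while for $m=2$, in particular for models with $e(\Gamma)\neq0$, you would need a duality/Fredholm argument at negative weights that the paper does not record). Note also that the paper's route delivers exactly what the sequel consumes, namely the moment map potentials $\langle\mu_{\eta},\tilde{X}\rangle=\pi^{*}\langle\mu_{eucl},\Xi\rangle+f$ with decaying $f$ used in Section~\ref{extanalisinonlinearemodello}; even completed, your argument would still need a further step ($H^{1}=0$ again, plus decay analysis) to convert ``Killing'' into such potentials.

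Beyond this, two of your steps have genuine gaps. First, the lifting: the Riemann extension theorem does not extend manifold-valued holomorphic maps across a divisor from boundedness alone (the map $z\mapsto e^{1/z}$ into $\PP^{1}$ has relatively compact image and does not extend); extendability of $\pi^{-1}\circ A\circ\pi$ across the exceptional set is precisely the assertion that the linear action lifts to the chosen resolution, which the paper also merely asserts (``naturally extends''), so the conclusion is shared but your derivation of it is not a proof. Second, and more seriously for your scheme, producing a globally defined decaying $u$ with $i\dd u=\tilde{A}^{*}\eta-\eta$ is not free: the isotopy argument only gives $\tilde{A}^{*}\eta-\eta=d\beta$, and passing from $d$-exactness to $i\dd$-exactness with decay requires a $\partial\overline{\partial}$-lemma on the noncompact ALE space, which rests on the same cohomological vanishing and $L^{2}$/weighted Hodge theory that drives the paper's proof. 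So your route does not bypass that machinery; it relocates it, and once it is supplied the two arguments are of comparable difficulty, with the paper's producing the stronger output.
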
 
\begin{proof}
The standard linear action of $U(m)$ on $\CC^{m}$ induces  a natural action of $C_{U(m)}\st \Gamma \dt$ on $\CC^{m}/\Gamma$ and this action naturally extends to a holomorphic action on the whole  $X_{\Gamma}$.  It is a standard fact that every element of $C_{U(m)}\st \Gamma \dt$ is the exponentiation of an element of $\mathfrak{u}_{\Gamma}\st m \dt \subset\mathfrak{u}\st m \dt$ ( the algebra of  skew symmetric matrices commuting with $\Gamma$) and hence any     $\Xi\in \mathfrak{u}_{\Gamma}\st m \dt$ gives rise to  a holomorphic vector field on $\CC^{m}/\Gamma $, that by abuse of notation we call $\Xi$, and  is of the form
\begin{equation}
\Xi:= \Xi_{i}^{j}z^{i}\partial_{j}\,.   
\end{equation} 
To prove the result we will show that every vector field constructed as above admits a Hamiltonian potential and hence preserves the symplectic structure $\eta$ and consequently, by the K\"ahler condition is also Killing.  The vector field $\Xi$ is Hamiltonian for the standard symplectic structure  $\omega_{eucl}$ indeed it has the purely imaginary potential
\begin{equation}
\left<\mu_{eucl},\Xi\right>:=\Xi_{i}^{j}z^{i}\overline{z^{j}}
\end{equation} 
such that 
\begin{equation}
\overline{\partial}\left<\mu_{eucl},\Xi\right>=\Xi\,\lrcorner\, \omega_{eucl}.
\end{equation} 
Denoting with $\pi\,:\,X_{\Gamma}\longrightarrow \CC^{m}/\Gamma$ the canonical surjection we consider $\xi$  the  $(0,1)$-form on $X_{\Gamma}$  defined as  
\begin{equation}
\xi\,:=\, \tilde{\Xi}\,\lrcorner\,\sq\eta - \pi^{*}\omega_{eucl}\dq
\end{equation}
clearly $\xi\in L^{2}\st X_{\Gamma},h \dt$ and moreover $\overline{\partial}\xi=0$. From \cite[Theorem 8.4.1]{j} we deduce that the first $L^{2}$-cohomology group $H_{L^{2}}^{1}\st X_{\Gamma},\CC \dt$ is isomorphic to the first De Rham cohomology group $H^{1}\st X_{\Gamma},\CC \dt$ and by \cite[Theorem 4.1]{Verbitsky}
we have that $H^{1}\st X_{\Gamma},\CC \dt=\sg 0 \dg$. Moreover by \cite[Corollary 12.8]{DemaillyIllusie} we find that 
\begin{equation}
\xi=\overline{\partial}f
\end{equation}
with $f$ complex function in $L^{2}\st X_{\Gamma},h \dt$. Using the mapping properties of the Laplace operator It is possible to obtain more informations on $f$.  We have, indeed,  that $\xi \in C_{1-2m}^{1,\alpha}\st X_{\Gamma}, \st T^{*}X_{\Gamma}\dt^{(1,0)} \dt$ by construction and hence a simple computation yields  $\overline{\partial}^{*}\xi\in C_{-2m}^{0,\alpha}\st X_{\Gamma},\CC \dt $.  Now
 \begin{equation}
 \Delta_{\eta}f=2\overline{\partial}^{*}\xi
 \end{equation}
and using the surjectivity of the Laplace operator 
\begin{equation}
\Delta_{\eta}: C_{\delta}^{2,\alpha}\st X_{\Gamma},\CC \dt\longrightarrow C_{\delta-2}^{0,\alpha}\st X_{\Gamma},\CC \dt\qquad\delta\in (3-2m,2-2m)
\end{equation}
and the fact that $\Delta_{\eta}$ has no bounded kernel we conclude that  $f\in C_{\delta}^{2,\alpha}\st X_{\Gamma},\CC \dt\cap C_{loc}^{\infty}\st X_{\Gamma} \dt$ with $\delta\in (3-2m,2-2m)$. Now we have, by construction, that 
\begin{equation}
\overline{\partial}\sq \pi^{*}\left<\mu_{eucl},\Xi\right>+f \dq=\tilde{\Xi}\,\lrcorner\,\eta
\end{equation} 
and hence 
\begin{equation}
\mathbb{L}_{\eta}\sq \pi^{*}\left<\mu_{eucl},\Xi\right>+f \dq=0\,.
\end{equation}
Since the metric $h$ is scalar flat, $\mathbb{L}_{\eta}$ is a real operator and consequently both the real and the imaginary part of $\pi^{*}\left<\mu_{eucl},\Xi\right>+f $ are in the kernel of $\mathbb{L}_{\eta}$. The real part of  $\pi^{*}\left<\mu_{eucl},\Xi\right>+f $ coincides with the real part of $f$ since $\pi^{*}\left<\mu_{eucl},\Xi\right>$ is purely imaginary, hence $f$  is bounded and by Proposition \ref{isomorfismopesati} it vanishes identically. We have therefore a Hamiltonian potential for the vector field $\tilde{\Xi}$ with respect to the symplectic form $\eta$. Since the flow of $\tilde{\Xi}$ preserves the complex structure and the symplectic structure, it  preserves also the metric  and the proposition is proved.  
 \end{proof}

We point out that a consequence of  Proposition \ref{potenzialiHamiltoniani} is that $\eta$ is invariant for the action of any torus in $C_{U(m)}\st \Gamma \dt$ in particular for the action of the special torus  $\tilde{T}$ we chose at the beginning. Moreover, as explained in the proof,  given a vector field in $X\in \mathfrak{t}$ and denoted with $\tilde{X}$ its lift to $X_{\Gamma}$, we can always find a Hamiltonian potential $\left<\mu_{\eta},\tilde{X}  \right>$ such that

\begin{equation}
\overline{\partial}\left<\mu_{\eta},\tilde{X}  \right>= \tilde{X}\,\lrcorner\, \eta\,.
\end{equation} 
This is a remarkable fact, indeed, a priori there could be obstructions,  topological or analytical, to the existence of Hamiltonian potentials as it happens in the asymptotically conical  setting. It turns out instead  that, because of the special nature of the singularity $\CC^{m}/{\Gamma}$, it is possible to find, on the resolutions, Hamiltonian potentials  of all  holomorphic vector fields coming from linear fields on $\CC^{m}/\Gamma$.

\section{Nonlinear analysis}

In this section we collect all the estimates needed in the proof of Theorem\ref{maintheorem}. We introduce a small parameter $\varepsilon$ and we will work on  the truncated 
manifolds (or orbifolds) $M_{\rep}$  and $X_{\Gamma_{j},\Rep}$ for $j=1,\ldots, N $  where we impose the  relation:
\begin{equation}
\rep=\varepsilon^{\frac{2m-1}{2m+1}}=\varepsilon \Rep.
\end{equation}

\begin{notation}
{\em For the rest of the section $\chi_{j}$ will denote a  smooth cutoff functions identically $1$ on $B_{2r_{0}}\st p_{j} \dt$ and identically $0$ outside $B_{3r_{0}}\st p_{j} \dt$ and $\chi_{l}$ will denote a  smooth cutoff functions identically $1$ on $B_{2r_{0}}\st q_{l} \dt$ and identically $0$ outside $B_{3r_{0}}\st q_{l} \dt$  }
\end{notation}

\subsection{{\em Pseudo-boundary data} and Biharmonic extensions}\label{biharmonicext}

We recall here the definition of {\em Pseudo-boundary data}.

\begin{align}
\mathcal{B}_{j}:=&C^{4,\alpha}\st \Sp^{2m-1}/\Gamma_{j} \dt\times C^{2,\alpha}\st \Sp^{2m-1}/\Gamma_{j} \dt\\
\mathcal{B}_{N+l}:=&C^{4,\alpha}\st \Sp^{2m-1}/\Gamma_{N+l} \dt\times C^{2,\alpha}\st \Sp^{2m-1}/\Gamma_{N+l} \dt
\end{align}

\begin{equation}
\mathcal{B}:=\st\prod_{j=1}^{N}\mathcal{B}_{j}\dt\times \st\prod_{l=1}^{K}\mathcal{B}_{N+l}\dt
\end{equation}

for $\q\neq \emptyset$
\begin{equation}\label{eq:dombdq}
\mathcal{B}\st   \kappa, \delta\dt:=\sg \st \hg,\kg \dt \in \mathcal{B}\, |\,  \left\| h_{j},k_{j}\right\|_{\mathcal{B}_{j}}\, ,\,  \left\|h_{N+l},k_{l}\right\|_{\mathcal{B}_{N+l}}\leq \kappa \rep^{4}  \dg
\end{equation}

and for $\q=\emptyset$

\begin{equation}\label{eq:dombd}
\mathcal{B}\st  \kappa, \delta\dt:=\sg \st \hg,\kg \dt\in \mathcal{B}\, \left\|\, \left\|h^{(0)}_{j},k^{(0)}_{j}\right\|_{\mathcal{B}_{j}}\leq \kappa \varepsilon^{4m+2}\rep^{-6m+4-\delta},  \left\|h^{(\dagger)}_{j},k^{(\dagger)}_{j}\right\|_{\mathcal{B}_{j}}\leq \kappa \varepsilon^{2m+4}\rep^{2-4m-\delta}\right.  \dg  
\end{equation}

Let $(h,k)\in C^{4,\alpha}\st\Sp^{2m-1}\dt\times C^{4,\alpha}\st\Sp^{2m-1}\dt$ the outer biharmonic extension of $(h,k)$ is the function $H_{h,k}^{o}\in C^{4,\alpha}\st \CC^{m}\setminus B_{1} \dt$ solution fo the boundary value problem
\begin{equation}\begin{cases}
\Delta^{2} H_{h,k}^{out}=0 & \textrm{ on } \CC^{m}\setminus B_{1}\\
H_{h,k}^{out}=h &\textrm{ on } \partial B_{1}\\
\Delta H_{h,k}^{out}=k& \textrm{ on } \partial B_{1}
\end{cases}\end{equation}
Moreover $H_{h,k}^{out}$ has the following expansion in Fourier series 
\begin{equation}\label{eq:bihout2}
H^{out}_{h,k}:=\begin{cases}
\sum_{\gamma=0}^{+\infty}\st  \st  h^{(\gamma)}+\frac{k^{(\gamma)}}{4(m+\gamma-2)}  \dt |w|^{2-2m-\gamma}-\frac{k^{(\gamma)}}{4(m+\gamma-2)}|w|^{4-2m-\gamma} \dt \phi_\gamma & m\geq 3\\
&\\
h^{(0)}|w|^{-2}+\frac{k^{(0)}}{2}\log\st|w|\dt+  \sum_{\gamma=1}^{+\infty}\st  \st  h^{(\gamma)}+\frac{k^{(\gamma)}}{4\gamma}  \dt |w|^{-2-\gamma}-\frac{k^{(\gamma)}}{4\gamma}|w|^{-\gamma} \dt \phi_\gamma & m=2
\end{cases}
\end{equation}

 Let $\st \tilde{h}, \tilde{k}\dt\in C^{4,\alpha}\st \Sp^{2m-1} \dt\times C^{2,\alpha}\st \Sp^{2m-1} \dt$, the biharmonic extension $\hkjj$ on $B_1$ of $\st \tilde{h}, \tilde{k}\dt$ is the function $\hkjj\in C^{4,\alpha}\st \overline{B_{1}} \dt$ given by the solution of the boundary value problem
\begin{equation}\begin{cases}
\Delta^{2}H_{\tilde{h},\tilde{k}}^{in}=0 & w\in  B_{1} \\
H_{\tilde{h},\tilde{k}}^{in}=\tilde{h}& w\in \partial B_{1}\\
\Delta H_{\tilde{h},\tilde{k}}^{in}=\tilde{k}& w\in \partial B_{1}\\
\end{cases}\,.\end{equation}

\noindent The function $\hkjj$  has moreover the expansion

\begin{equation}\label{eq:bihout3}
\hkjj\st w\dt=\sum_{\gamma=0}^{+\infty}\left(\left(\tilde{h}^{(\gamma)}-\frac{\tilde{k}^{(\gamma)}}{4(m+\gamma)}  \dt |w|^\gamma+\frac{\tilde{k}^{(\gamma)}}{4(m+\gamma)}|w|^{\gamma+2} \dt \phi_\gamma\,.
\end{equation}

\begin{remark}
In the sequel we will take $\Gamma$-invariant $(h,k)\in C^{4,\alpha}\st\Sp^{2m-1}\dt\times C^{4,\alpha}\st\Sp^{2m-1}\dt$ and by \cite[Remark 2.4]{alm}  we will 
have no terms with $\phi_{1}$ in the formula  \eqref{eq:bihout2} for nontrivial $\Gamma$.
\end{remark}

\subsection{Extremal metrics on the truncated base orbifold.}

We want to construct perturbations of $\omega$ on $M_{\rep}$ of the form
\begin{equation}
\omega_{\hg,\kg}:=\omega+i\dd F_{\hg,\kg}^{out}
\end{equation}
such that $\omega_{\hg,\kg}$ is extremal. Since we look for small perturbations, by Proposition \ref{variazioneestremale} we have to find $\st F_{\hg,\kg}^{out},X^{out},c^{out} \dt\in C^{\infty}\st M_{\rep} \dt^{T}\times \mathfrak{t}\times \RR $ that solves equation \eqref{eq:estremalecompatta}  i.e.   
\begin{equation}\label{eq:estremaletroncatobase}
P_{\omega}^{*}P_{\omega}\sq F_{\hg,\kg}^{out} \dq+2\left<\mu_{\omega},X^{out}\right>+2c^{out} =  -JX^{out}\sq F_{\hg,\kg}^{out} \dq+\NN_{\omega}\st F_{\hg,\kg}^{out} \dt
\end{equation}

%
%
We seek $F_{\hg,\kg}^{out}$ of the form
\begin{align}
F_{\hg,\kg}^{out}:=&{\bf H}_{\hg,\kg}^{out} +  f_{\hg,\kg}^{out}
\end{align}
with 
\begin{align}
{\bf H}_{\hg,\kg}^{out}:=&\sum_{i=1}^{N+K}\chi_{i}H_{h_{i},k_{i}}^{out}	\st\frac{z}{\rep} \dt 
\end{align}

As usual, it is convenient to work on the punctured orbifold $M_{\p,\q} $ and to use the standard truncation/extension operators $\mathcal{E}_{\rep}$, so  we want to find  $\st f_{\hg,\kg}^{out},X^{out}, c^{out}  \dt \in C_{\delta}^{4,\alpha}\st M_{\p,\q} \dt^{T}\times \mathfrak{t}\times \RR $  with $\delta \in (4-2m,5-2m)$ such that

\begin{equation}\label{eq:estremalebase}
P_{\omega}^{*}P_{\omega}\sq f_{\hg,\kg}^{out} \dq+2\left<\mu_{\omega},X^{out}\right>+2c^{out} =-\mathcal{E}_{\rep}P_{\omega}^{*}P_{\omega}\sq {\bf H}_{\hg,\kg}^{out} \dq  -\mathcal{E}_{\rep}JX^{out}\sq  {\bf H}_{\hg,\kg}^{out} +  f_{\hg,\kg}^{out} \dq+\mathcal{E}_{\rep}\NN_{\omega}\st {\bf H}_{\hg,\kg}^{out} +  f_{\hg,\kg}^{out} \dt
\end{equation}

Making use of Proposition \ref{linearizzatoestremalebase} and Proposition \ref{isomorfismopesati} we can we rephrase equation \eqref{eq:estremalebase}  as a fixed point problem
\begin{align}
\st f_{\hg,\kg}^{out}, X^{out}, c^{out}  \dt \,\,=&\,\, \mathcal{N}^{out}\st f_{\hg,\kg}^{out},X^{out},c^{out},\hg,\kg \dt
\end{align}

with 
\begin{align}
\mathcal{N}^{out}\,:\,& C_{\delta}^{4,\alpha}\st M_{\p,\q} \dt^{T}\times \mathfrak{t}\times \RR\times \mathcal{B}\st \kappa,\delta \dt\rightarrow C_{\delta}^{4,\alpha}\st M_{\p,\q} \dt^{T}\times \mathfrak{t}\times \RR
\end{align}
nonlinear continuous operator. With computations analogous to those performed in \cite{aps} one  obtains the following result.
\begin{prop}\label{famigliabase}
Let $\st M,g,\omega \dt$ be an extremal orbifold with $T$-invariant metric $g$.Then for every $\st \hg,\kg \dt\in \mathcal{B}\st \kappa \dt$ there is $\st  f_{\hg,\kg}^{out},X^{out},c^{out} \dt\in C_{\delta}^{4,\alpha}\st M_{\p,\q} \dt^{T}\times \mathfrak{t}\times \RR$ with 
\begin{equation}\label{eq:boundestremalebase}
\left\|f_{\hg,\kg}^{out}\right\|_{C_{\delta}^{4,\alpha}\st M_{\p,\q} \dt}+\left\| X^{out}\right\|_{C^{4,\alpha}\st M \dt}+|c^{out}|\leq C\st \kappa \dt \rep^{2m}
\end{equation}
such that 
\begin{equation}
\tilde{\omega}_{\hg,\kg}:=\omega+i\dd\st  {\bf H}_{\hg,\kg}^{out} +  f_{\hg,\kg}^{out} \dt
\end{equation}
is an extremal K\"ahler metric on $M_{\rep}$ with extremal vector field $X_{s}+X^{out}$. 
\end{prop}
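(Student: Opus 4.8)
The plan is to set up equation~\eqref{eq:estremaletroncatobase} as a fixed point problem and solve it via the contraction mapping theorem, following the scheme of \cite{aps}. First I would rewrite the extremal equation on the truncated base in the punctured form~\eqref{eq:estremalebase}, where the approximate solution $\hko$ contributes a controlled error term. The key structural input is Proposition~\ref{linearizzatoestremalebase}, which guarantees that the linear operator $\mathcal{P}_{\omega,\delta}$ is Fredholm with kernel equal to $\mathfrak{t}$ modulo constants. Because we have enlarged the domain to include the pair $(X^{out},c^{out})\in\mathfrak{t}\times\RR$ as unknowns, this kernel is precisely absorbed, so $\mathcal{P}_{\omega,\delta}$ admits a bounded right inverse on the relevant weighted H\"older spaces with $\delta\in(4-2m,5-2m)$. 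Composing this right inverse with the nonlinear right-hand side defines the operator $\mathcal{N}^{out}$, and a fixed point of $\mathcal{N}^{out}$ produces the desired triple $\st f_{\hg,\kg}^{out},X^{out},c^{out}\dt$.

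The core of the argument is then the estimate of the various pieces of the right-hand side of~\eqref{eq:estremalebase}. I would estimate each in the target norm $C_{\delta-4}^{0,\alpha}\st M_{\p,\q}\dt^{T}$. The dominant contribution comes from the term $\mathcal{E}_{\rep}P_{\omega}^{*}P_{\omega}\sq\hko\dq$: since $\hko$ is built from the biharmonic extensions $H_{h_i,k_i}^{out}\st z/\rep\dt$ cut off near the punctures, its biharmonic leading behaviour is killed by $\Delta_\omega^2$, leaving only the lower-order curvature terms of $P_\omega^* P_\omega$ and the commutator with the cutoff $\chi_i$. Using the bound $\left\|h_i,k_i\right\|_{\mathcal{B}_i}\leq\kappa\rep^4$ from the definition~\eqref{eq:dombdq} together with the Fourier expansion~\eqref{eq:bihout2}, a scaling computation yields the factor $\rep^{2m}$ appearing in~\eqref{eq:boundestremalebase}. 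The moment map term $\left<\mu_\omega,X^{out}\right>$, the nonlinear remainder $\NN_\omega$, and the transport term $JX^{out}$ are all shown to be of higher or comparable order and, crucially, Lipschitz in the unknowns with small Lipschitz constant once $\rep$ is small.

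With these estimates in hand, the contraction argument is standard: one checks that $\mathcal{N}^{out}$ maps a ball of radius $C(\kappa)\rep^{2m}$ in $C_{\delta}^{4,\alpha}\st M_{\p,\q}\dt^{T}\times\mathfrak{t}\times\RR$ into itself and is a contraction there, provided $\bar{\varepsilon}$ (hence $\rep$) is chosen small enough. The solvability constraints recorded in Remark~\ref{dipendenzaparametri}, which express $c^{out}$ and the $\mathfrak{t}$-component through the $L^2$-orthogonality of the right-hand side to $\ker(P_\omega^* P_\omega)$, are automatically encoded in the right inverse furnished by Proposition~\ref{linearizzatoestremalebase}, so no separate balancing condition is needed at this stage. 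The resulting fixed point yields a genuine solution of~\eqref{eq:estremaletroncatobase}, whence $\tilde{\omega}_{\hg,\kg}=\omega+i\dd\st\hko+f_{\hg,\kg}^{out}\dt$ is extremal on $M_{\rep}$ with extremal vector field $X_s+X^{out}$.

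I expect the main obstacle to be the sharp estimate of $\mathcal{E}_{\rep}P_{\omega}^{*}P_{\omega}\sq\hko\dq$ in the weighted norm, namely verifying that the error really decays at the rate $\rep^{2m}$ rather than something slower. This requires careful bookkeeping of how the scaling of $H_{h_i,k_i}^{out}\st z/\rep\dt$ interacts with the fixed weight $\delta$ and with the lower-order (first- and zeroth-order) terms of the fourth-order operator $P_\omega^* P_\omega$, including the curvature coupling $4\langle\rho_\omega\,|\,i\dd\,\cdot\,\rangle$ in $\Lg$ and the gradient term $\left<\nabla s_\omega,\nabla\,\cdot\,\right>$; the cutoff commutators supported on the annuli $B_{3r_0}\setminus B_{2r_0}$ are where the genuine (non-biharmonic) error is concentrated and must be controlled. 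Once this is pinned down the remaining steps are routine adaptations of \cite{aps}.
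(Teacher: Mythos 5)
Your proposal follows essentially the same route as the paper: the paper likewise recasts equation \eqref{eq:estremalebase} as a fixed point problem for the triple $\st f_{\hg,\kg}^{out},X^{out},c^{out}\dt$, using Proposition \ref{linearizzatoestremalebase} to invert the linear part (with $\mathfrak{t}\times\RR$ absorbing the kernel/cokernel) and then deferring the error and contraction estimates to computations analogous to \cite{aps}, which is exactly the bookkeeping you outline, including the $\rep^{2m}$ rate coming from the $\kappa\rep^{4}$ bound on the boundary data. No discrepancy of substance; your identification of the estimate on $\mathcal{E}_{\rep}P_{\omega}^{*}P_{\omega}\sq\hko\dq$ as the crux is precisely where the paper leans on \cite{aps}.
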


It is important to note that a consequence of condition \eqref{eq:boundestremalebase} is that  around points in $\p$ and $\q$ the following estimates hold
\begin{equation}
\left\|\left. f_{\hg,\kg}^{out}\st \rep\cdot \dt\right|_{B_{2\rep}\st p_{j} \dt}  \right\|_{C^{4,\alpha}\st \overline{B_{2}}\setminus B_{1} \dt},\left\|\left. f_{\hg,\kg}^{out}\st \rep\cdot \dt\right|_{B_{2\rep}\st q_{N+l} \dt}  \right\|_{C^{4,\alpha}\st \overline{B_{2}}\setminus B_{1} \dt}\leq \mathsf{C}\rep^{4}
\end{equation} 
with $\mathsf{C}$ positive constant depending only on $g$. This kind of estimate is indeed necessary for the success of the data matching procedure. 
\vspace{10pt}

Another fact that that deserves a word of comment is the fact that the triple $\st  f_{\hg,\kg}^{out},X^{out},c^{out} \dt$ of Proposition \ref{famigliabase} depends nonlinearly unpon the parameters $\st \hg,\kg \dt$.  Indeed, being a solution of the PDE \eqref{eq:estremalebase} it is immediate to see, by the structure of such equation,  the nonlinear dependence of $f^{out}$ on $\st \hg,\kg \dt$ and then Remark \ref{dipendenzaparametri} evidence the nonlinear dependence of $c^{out}$ and $X^{out}$ on $\st \hg,\kg \dt$ and the link between $  f_{\hg,\kg}^{out}$, $X^{out}$ and $c^{out}$.

\subsection{Kcsc metrics on the truncated base orbifold.}\label{nonlinbase}

\noindent We recall here the  construction of the families of Kcsc metrics on  $ M_{\rep}$. We want to construct $F_{\ag,\bg,\cg,\hg,\kg}^{out}\in C^{4,\alpha}\st M_{\rep} \dt$ such that 
\begin{equation}\omega_{\ag,\bg,\cg,\hg,\kg}:=\omega +i\dd F_{\ag,\bg,\cg,\hg,\kg}^{out}\end{equation} 

\noindent is a metric on $M_{\rep}$ and its scalar curvature $s_{\omega_{\ag,\bg,\cg,\hg,\kg}}$ is a small perturbation of the scalar curvature $s_{\omega}$ of the reference K\"ahler metric on $M$.

  The function $F_{\ag,\bg,\cg,\hg,\kg}^{out}$ consists of blocks and takes different shapes whether $\q$ is empty or not 
\begin{equation}
F_{\ag,\bg,\cg,\hg,\kg}^{out}:=\begin{cases} \varepsilon^{2m-2}{\bf{G}}_{\ag,{\bf 0},\cg}+ \hko+ f_{\ag,{\bf 0},\cg,\hg,\kg}^{out}& \q\neq \emptyset\\
&\\
-\varepsilon^{2m}{\bf{G}}_{{\bf 0} ,\bg,\cg}+\Pbe+ \hko+ f_{{\bf 0},\bg,\cg,\hg,\kg}^{out}&\q=\emptyset
\end{cases}\,.
\end{equation}  
The function $F_{\ag,\bg,\cg,\hg,\kg}^{out}$  is made of  the  skeleton $\varepsilon^{2m-2}{\bf{G}}_{\ag,\bg,\cg}$ , respectively $\varepsilon^{2m}{\bf{G}}_{\bf{0},\bg,\cg}$ when $\q=\emptyset$, biharmonic extensions of {\em pseudo-boundary data} $\hko$, transplanted potentials of $\eta_{j}$'s $\Pbe$ when $\q=\emptyset$ and a ``small" correction term $f_{\ag,\bg,\cg,\hg,\kg}^{out}$ that has to be determined.  
\begin{itemize}
\item[] {\bf Skeleton.} The skeleton is made of {\em multi-poles fundamental solutions} ${\bf{G}}_{\ag,\bg,\cg}$  of $\Lg$ introduced in formula  \ref{eq:gabc} that near points  $p_{j}$ we have the expansion      
\begin{equation}
{\bf{G}}_{\ag,\bg,\cg}\sim \frac{ \cgaj b_{j} |\Gamma_{j}|}{2 |\cgaj| \st m-1 \dt |\Sp^{2m-1}|}G_{\Delta}\st p_{j},z \dt.
\end{equation}
and near points $q_{l}$
\begin{equation}
{\bf{G}}_{\ag,\bg,\cg}\sim \frac{a_{l}\egal |\Gamma_{l}|}{8\st m-2 \dt\st m-1 \dt |\egal| |\Sp^{2m-1}|}G_{\Delta\Delta}\st q_{l},z \dt.
\end{equation}  
where $ \cgaj  $ and $\egal$ are introduced in formulas \eqref{eq:eg}  and \eqref{eq: poteta}. It is then convenient, from now on, to set the following notation
\begin{align}\label{eq:Al}
A_{l}=&\st \frac{a_{l}|\Gamma_{l}|}{ 8|\egal| \st m-2 \dt \st m-1 \dt|\Sp^{2m-1}|}  \dt^{\frac{1}{2m-2}}\\
\label{eq:Bj}
B_{j}=&\st \frac{b_{j}|\Gamma_{j}|}{ 2|\cgaj | \st m-1 \dt|\Sp^{2m-1}|}  \dt^{\frac{1}{2m}}\\
\label{eq:Cj}
C_{j}=&\frac{|\Gamma_{j}|}{8\st m-2 \dt\st m-1 \dt}\sq  2|\cgaj|  B_{j}^{2m}\frac{\st m-1 \dt|\Sp^{2m-1}|}{m|\Gamma_{j}|}s_{\omega}\st 1+\frac{\st m-1 \dt^{2}}{\st m+1 \dt} \dt-c_{j} \dq.
\end{align}

\item[] {\bf Extensions of {\em pseudo-boundary data}.} As in \cite{alm}, \cite{ap1} and \cite{ap2}   we define  for $\st \hg,\kg \dt\in \dombd$ 
\begin{equation}\label{eq:hko}
\hko:=\begin{cases}\sum_{j=1}^{N}\chi_{j}H_{h_{j}^{(\dagger)},k_{j}^{(\dagger)}}^{out}\st \frac{z}{\rep} \dt & \q\neq\emptyset\\
&\\
\sum_{j=1}^{N}\chi_{j}H_{h_{j},k_{j}^{(\dagger)}}^{out}\st \frac{z}{\rep}\dt+\sum_{l=1}^{K}\chi_{l}H_{h_{N+l},k_{N+l}^{(\dagger)}}^{out}\st \frac{z}{\rep} \dt & \q=\emptyset
\end{cases}
\end{equation}

\item[] {\bf Transplanted potentials.} We need this term only when $\q=\emptyset$.   It is the  the term
\begin{equation}\label{eq:transplanted}
\textbf{P}_{\bg,\etag}:=\sum_{j=1}^{N}B_{j}^{2}\varepsilon^{2}\chi_{j}\psi_{\eta_{j}}\st \frac{z}{B_{j}	\varepsilon} \dt\,.
\end{equation}
with   the  coefficients $B_{j}$'s  defined in formula \eqref{eq:Bj} and $\psi_{\eta_{j}}$'s defined in formula \eqref{eq: poteta}.

\item[] {\bf Correction term.} It is the term that ensures the constancy of the scalar curvature of the metric $\omega_{\ag,\bg,\cg,\hg,\kg}$ and it is the solution on $M_{\rep}$ of the equations in the unknown $f$  
\begin{align}\label{eq:basegrezza2}
\Lg \sq f\dq=&\st 2s_{\omega}- \varepsilon^{2m-2}\nu_{{\bf 0},\cg}-2s_{\omega_{{\bf 0},\bg,\cg,\hg,\kg}}\dt-\Lg\sq \Pbe \dq -\Lg \sq \hko \dq \\
&+\NN _{\omega}\st -\varepsilon^{2m-2}{\bf{G}}_{{\bf 0},\bg,\cg}+\Pbe+ \hko+ f\dt\,,
\end{align}
if $\q=\emptyset$ and 

\begin{align}\label{eq:basegrezza2q}
\Lg \sq f\dq=&\st 2s_{\omega} +\varepsilon^{2m-2}\nu_{\ag,\cg}-2s_{\omega_{\ag,{\bf 0},\cg,\hg,\kg}}\dt -\Lg \sq \hko \dq \\
&+\NN _{\omega}\st +\varepsilon^{2m-2}{\bf{G}}_{\ag,{\bf 0},\cg}+ \hko+ f\dt\,.
\end{align}
if $\q\neq\emptyset$.

It is a function $f_{\ag,\bg,\cg,\hg,\kg}^{out}\in \Cqddq$ if $m\geq 3$ and $f_{\ag,\bg,\cg,\hg,\kg}^{out}\in \Cqdddq$ if $m=2$,  where the spaces $\Cqddq$ and $\Cqdddq$ are defined in  formulas \eqref{deficiency1} and \eqref{deficiency2}. 

\end{itemize} 

\begin{notation}
{\em For the rest of the paper we will denote with $\mathsf{C}$ a positive constant, that can vary from line to line, depending only on $\omega$, $\eta_{j}$'s and $\theta_{N+l}$'s.  }
\end{notation}

 We can now state the main propositions for the base space.  The first deals with the case $\q\neq \emptyset$, and its proof is an easy adaptation of the proof  of Proposition 5.1 in \cite{ap2}
\begin{prop}
\label{crucialbaseq}
Let $(M,g,\omega )$ a Kcsc orbifold with isolated singularities and let $\p$ be the set of singular points with non trivial orbifold group that admit a scalar flat ALE resolution with $\ega=0$ and $\q$ a non empty set of points admitting scalar flat ALE resolutions with $\ega\neq 0$ .

\begin{itemize}
\item  Assume exist $\ag\in \st\RR^{+}\dt^{K}$  such that 
	\begin{displaymath}
\left\{\begin{array}{lcl}
\sum_{l=1}^{K}a_{l}\frac{\egal}{|\egal|}\varphi_{i}\st q_{l} \dt=0 && i=1,\ldots, d\\
&&\\
\st \Xi\st \ag \dt\dt_{\substack{1\leq i\leq d\\ 1\leq l\leq K}}&& \textrm{has full rank}
\end{array}\right.
\end{displaymath}
where $\st\Xi\st \ag \dt\dt_{\substack{1\leq i\leq d\\ 1\leq l\leq K}}$ is the matrix introduced in Section \ref{linearanalysis} formula \eqref{eq:nondeggen}. Let ${\bf{G}}_{\ag,\bg,\cg}$ be the multi-poles solution of $\Lg$ introduced in Section \ref{linearanalysis}  Remark \ref{eq:gabc}.

\item    Let $\delta\in (4-2m,5-2m)$. Given any $\st \hg,\kg \dt\in \dombd$, where $\dombd$ is the space defined in formula \eqref{eq:dombdq},  let $\hko$ be the function defined in formula \eqref{eq:hko}. 
\begin{equation}
\hko:=\sum_{j=1}^{N}\chi_{j}H_{h_{j},k_{j}^{(\dagger)}}^{out}\st \frac{z}{\rep} \dt+\sum_{l=1}^{K}\chi_{l}H_{h_{N+l},k_{N+l}^{(\dagger)}}^{out}\st \frac{z}{\rep} \dt\,.
\end{equation}
\end{itemize}
Then for every $\left|\cg\right|\leq \rep^{4}$ there is $\bar{\ag}\in\st \RR^{+} \dt^{K}$ such that 
\begin{equation}
\left| \ag-\bar{\ag}\right|\leq \mathsf{C}\rep^{2m}\varepsilon^{2-2m}
\end{equation}
and $f_{\ag,{\bf 0},\cg,\hg,\kg}^{out}\in  \Cqdda$ if $m\geq 3$ and $f_{\ag,{\bf 0},\cg,\hg,\kg}^{out}\in C_{\delta}^{4,\alpha}\st M_{\p,\q} \dt\oplus\mathcal{E}_{\p}\oplus\mathcal{E}_{\q}\oplus \mathcal{D}_{\q}\st \bar{\ag} \dt$ if $m=2$ such that 
\begin{equation}
\omega_{\ag, {\bf 0},\cg,\hg,\kg}=\omega+i\dd\st {\bf{G}}_{\bar{\ag},\bf{0},\cg}  + \hko +f_{\ag,{\bf 0},\cg,\hg,\kg}^{out}  \dt
\end{equation}
is a Kcsc metric on $M_{\rep}$ and the following estimates hold.
\begin{equation}
\begin{array}{lll}
\left\|f_{\ag,\bf{0},\cg,\hg,\kg}^{out}\right\|_{\Cqdda}&\leq \mathsf{C} \rep^{2m+1}&\textrm{ for } m\geq 3\\
\left\|f_{\ag,\bf{0},\cg,\hg,\kg}^{out}\right\|_{C_{\delta}^{4,\alpha}\st M_{\p,\q} \dt\oplus\mathcal{E}_{\p}\oplus\mathcal{E}_{\q}\oplus \mathcal{D}_{\q}\st \bar{\ag} \dt}&\leq \mathsf{C} \rep^{5}&\textrm{ for } m= 2
\end{array}\,.
\end{equation}
 Moreover $s_{\omega_{\ag, {\bf 0},\cg,\hg,\kg}}$, the scalar curvature of $\omega_{\ag,{\bf 0},\cg,\hg,\kg}$, is a small perturbation of $s_{\omega}$, the scalar curvature of the background metric $\omega$ 
\begin{equation}
\left| s_{\omega_{\ag,{\bf 0},\cg,\hg,\kg}}-s_{\omega}\right|\leq \mathsf{C}\varepsilon^{2m-2}\,.
\end{equation}
\end{prop}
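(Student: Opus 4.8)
The plan is to solve the constant scalar curvature equation \eqref{eq:basegrezza2q} for the correction term $f$ by a fixed point argument, carrying along the vector $\bar{\ag}$ as an auxiliary finite-dimensional unknown to be tuned so as to kill the obstruction coming from $\ker\st \Lg \dt$. Recalling the expansion \eqref{eq:espsg} of the scalar curvature operator, the requirement that $\omega_{\ag,{\bf 0},\cg,\hg,\kg}$ be Kcsc is, after subtracting the background value $s_\omega$, precisely \eqref{eq:basegrezza2q}, which I would abbreviate as
\begin{equation}
\Lg \sq f \dq \,=\, \mathcal{R}\st \bar{\ag}, \cg, \hg, \kg, f \dt ,
\end{equation}
where $\mathcal{R}$ collects the scalar curvature defect $2s_\omega + \varepsilon^{2m-2}\nu_{\ag,\cg} - 2s_{\omega_{\ag,{\bf 0},\cg,\hg,\kg}}$ of the approximate (pre-correction) metric built from the skeleton $\varepsilon^{2m-2}\GGG_{\bar{\ag},{\bf 0},\cg}$ and the extensions $\hko$, the linear contribution $-\Lg\sq \hko \dq$, and the nonlinear remainder $\NN_\omega$. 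The key analytic input is Proposition \ref{isomorfismopesati}: for $\delta\in\st 4-2m,5-2m \dt$ the operator $\Lg$ admits a right inverse on $M_{\p,\q}$ once one allows values in the weighted space enlarged by the deficiency space $\mathcal{D}_{\q}\st \bar{\ag} \dt$ (and the extra spaces $\mathcal{E}_\p,\mathcal{E}_\q$ when $m=2$), the prescribed singular profiles $G_{\Delta\Delta}\st q_l,\cdot \dt$ being exactly the deficiency modes that the skeleton $\GGG_{\bar{\ag},{\bf 0},\cg}$ produces near the $q_l$'s.

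Second, I would estimate the size of $\mathcal{R}$ in the norm $\left\| \cdot \right\|_{\Ccq}$. The dominant term is the scalar curvature defect, supported in the annular gluing regions around the $p_j$'s and $q_l$'s and controlled by the mismatch of the leading asymptotics of the glued pieces; together with the quadratic control of $\NN_\omega$ on a small ball this yields a bound of order $\mathsf{C}\rep^{2m+1}$ for $m\geq 3$ (respectively $\mathsf{C}\rep^{5}$ for $m=2$). Composing with the right inverse and checking, via the quadratic estimates for $\NN_\omega$ and the Lipschitz dependence of $\GGG$ and $\hko$ on their parameters, that the resulting map is a contraction on the ball of the corresponding radius in the appropriate weighted-plus-deficiency space, produces the correction $f_{\ag,{\bf 0},\cg,\hg,\kg}^{out}$ together with the stated estimate.

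The genuine difficulty, and the place where the hypotheses enter, is the obstruction. Since $(M,g,\omega)$ is Kcsc the operator $\Lg=P_\omega^*P_\omega$ is self-adjoint with $\ker\st \Lg \dt = \text{span}_{\RR}\sg \varphi_0,\ldots,\varphi_d \dg$, so solvability requires $\mathcal{R}$ to be $L^2$-orthogonal to each $\varphi_i$, $i=0,\ldots,d$. The component along $\varphi_0\equiv 1$ is free, being absorbed into the undetermined constant $\nu_{\ag,\cg}$ (the value of the constant scalar curvature), exactly as in the compatibility relation \eqref{eq: balancingbc2}. The remaining $d$ conditions, obtained by pairing the leading part of $\mathcal{R}$ with $\varphi_1,\ldots,\varphi_d$, reduce at leading order to the balancing condition $\sum_l a_l \frac{\egal}{|\egal|}\varphi_i\st q_l \dt=0$ of the hypothesis; the full-rank assumption on the matrix $\Xi\st \ag \dt$ of \eqref{eq:nondeggen2} guarantees that the linearization of these $d$ equations in the parameters $\bar{\ag}$ is surjective with a bounded inverse.

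I would therefore set up a coupled fixed point: for each candidate $f$ one solves the $d$ scalar obstruction equations for $\bar{\ag}$ by the implicit function theorem (using the full-rank matrix $\Xi\st \ag \dt$), and for each $\bar{\ag}$ one solves the PDE for $f$ as above, then iterates. The main obstacle is precisely to make this coupling converge: one must show that the feedback of $f$ on $\bar{\ag}$ and of $\bar{\ag}$ on $\mathcal{R}$ is a higher-order perturbation, so that the Banach fixed point theorem applies on the product space and yields the quantitative control $\left| \ag - \bar{\ag} \right| \leq \mathsf{C}\rep^{2m}\varepsilon^{2-2m}$. Finally, the estimate $\left| s_{\omega_{\ag,{\bf 0},\cg,\hg,\kg}} - s_\omega \right| \leq \mathsf{C}\varepsilon^{2m-2}$ follows by reading off the constant $\nu_{\ag,\cg}$ from the $\varphi_0$-projection, its size being dictated by the $\varepsilon^{2m-2}$ scaling of the skeleton.
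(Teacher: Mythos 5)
Your proposal is correct and follows essentially the same route as the paper, which proves Proposition \ref{crucialbaseq} by invoking the fixed-point-with-parameter-tuning scheme of \cite[Proposition 5.1]{ap2}: a right inverse for $\Lg$ on weighted spaces enlarged by the deficiency space $\mathcal{D}_{\q}\st\bar{\ag}\dt$ (plus $\mathcal{E}_{\p}\oplus\mathcal{E}_{\q}$ for $m=2$), error estimates for the approximate solution, and removal of the $d$-dimensional obstruction from $\ker\st\Lg\dt$ by perturbing $\ag$ to $\bar{\ag}$ using the full-rank hypothesis on $\Xi\st\ag\dt$, exactly as you describe. The only slip is attributional: the right inverse on $M_{\p,\q}$ comes from the linear analysis on the base recalled from \cite{alm} in Section \ref{linearanalysis} (multi-pole fundamental solutions and deficiency spaces), not from Proposition \ref{isomorfismopesati}, which concerns $\mathbb{L}_{\eta}$ on the ALE models $X_{\Gamma}$.
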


The second one deals with the case $\q=\emptyset$. The following Proposition follows from the same argument of the proof of \cite[Proposition 5.4]{alm} observing that Ricci flatness does not enter in the proof.

\begin{prop}
\label{crucialbase}
Let $(M,g,\omega )$ a Kcsc orbifold with isolated singularities and let $\p$ be the set of singular points with non trivial orbifold group that admit a scalar flat ALE  resolution with $\ega=0$.

\begin{itemize}
\item  Assume exist $\bg\in \st\RR^{+}\dt^{N}$ and $\cg\in \RR^{N}$ such that 
	\begin{displaymath}
\left\{\begin{array}{lcl}
\sum_{j=1}^{N} \frac{\cgaj}{|\cgaj|} \st b_{j}\Delta_{\omega}\varphi_{i}\st p_{j} \dt+c_{j}\varphi_{i}\st p_{j} \dt\dt=0 && i=1,\ldots, d\\
&&\\
\st\Theta\st \bg,\cg \dt\dt_{\substack{1\leq i\leq d\\ 1\leq j\leq N}}&& \textrm{has full rank}
\end{array}\right.
\end{displaymath}
where $\st\Theta\st \bg,\cg \dt\dt_{\substack{1\leq i\leq d\\ 1\leq j\leq N}}$ is the matrix introduced in Section \ref{linearanalysis} formula \eqref{eq:nondeggen}. Let ${\bf{G}}_{\bf{0},\bg,\cg}$ be the multi-poles solution of $\Lg$ introduced in Section \ref{linearanalysis}  in formula \ref{eq:gabc}.

\item    Let $\delta\in (4-2m,5-2m)$. Given any $\st \hg,\kg \dt\in \dombd$, where $\dombd$ is the space defined in formula \eqref{eq:dombd},  let $\hko$ be the function defined in formula \eqref{eq:hko}. 
\begin{equation}
\hko:=\sum_{j=1}^{N}\chi_{j}H_{h_{j}^{(\dagger)},k_{j}^{(\dagger)}}^{out}\st \frac{z}{\rep} \dt\,.
\end{equation}
\item Let ${\bf P}_{\bg,\etag}$ be the transplanted potentials defined in formula \eqref{eq:transplanted}
\begin{equation}
{\bf P}_{\bg,\etag}:=\sum_{j=1}^{N}B_{j}^{2}\varepsilon^{2}\chi_{j}\psi_{\eta_{j}}\st \frac{z}{B_{j}	\varepsilon} \dt\,.
\end{equation}
\end{itemize}
Then there is $f_{{\bf 0},\bg,\cg,\hg,\kg}^{out}\in \Cqdd$ if $m\geq 3$ and $f_{{\bf 0},\bg,\cg,\hg,\kg}^{out}\in \Cqddd$ if $m=2$ such that 
\begin{equation}
\omega_{{\bf 0},\bg,\cg,\hg,\kg}=\omega+i\dd\st {\bf{G}}_{\bf{0},\bg,\cg}  +{\bf P}_{\bg,\etag}+ \hko +f_{{\bf 0},\bg,\cg,\hg,\kg}^{out}  \dt
\end{equation}
is a Kcsc metric on $M_{\rep}$ and the following estimates hold.
\begin{equation}
\begin{array}{lll}
\left\|f_{\bf{0},\bg,\cg,\hg,\kg}^{out}\right\|_{\Cqdd}&\leq \mathsf{C} \varepsilon^{2m+2}\rep^{2-2m-\delta}&\textrm{ for } m\geq 3\\
\left\|f_{\bf{0},\bg,\cg,\hg,\kg}^{out}\right\|_{\Cqddd}&\leq \mathsf{C} \varepsilon^{6}\rep^{-2-\delta}&\textrm{ for } m= 2
\end{array}\,.
\end{equation}
 Moreover $s_{\omega_{{\bf 0},\bg,\cg,\hg,\kg}}$, the scalar curvature of $\omega_{{\bf 0},\bg,\cg,\hg,\kg}$, is a small perturbation of $s_{\omega}$, the scalar curvature of the background metric $\omega$ 
\begin{equation}
\left| s_{\omega_{{\bf 0},\bg,\cg,\hg,\kg}}-s_{\omega}\right|\leq \mathsf{C}\varepsilon^{2m}\,.
\end{equation}
\end{prop}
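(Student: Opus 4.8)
The plan is to realize $\omega_{{\bf 0},\bg,\cg,\hg,\kg}$ as a small perturbation of the background Kcsc metric $\omega$ and to solve the constant scalar curvature equation by a fixed point argument, following verbatim the scheme of \cite[Proposition 5.4]{alm}. First I would record the ansatz for the potential,
$$
F_{{\bf 0},\bg,\cg,\hg,\kg}^{out} \, = \, -\varepsilon^{2m}{\bf G}_{{\bf 0},\bg,\cg} \, + \, \Pbe \, + \, \hko \, + \, f,
$$
and use the expansion \eqref{eq:espsg} to rephrase the requirement that $\omega+i\dd F^{out}$ have constant scalar curvature as the single equation \eqref{eq:basegrezza2} for the unknown correction $f$. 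The skeleton ${\bf G}_{{\bf 0},\bg,\cg}$ is designed so that its singular profile $|z|^{2-2m}$ at each $p_j$ cancels the leading term $-c(\Gamma_j)\,|x|^{2-2m}$ of the local potential in \eqref{eq: poteta}; its existence as a genuine multi-poles fundamental solution of $\Lg$ on $M$ is exactly what the first balancing identity, together with the solvability criterion recalled in Section \ref{linearanalysis}, provides.

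Next I would reduce \eqref{eq:basegrezza2} to a fixed point problem. The linear analysis on the base, recalled in Section \ref{linearanalysis} from \cite{alm}, supplies a right inverse for $\Lg$ on the weighted space $\Cqd$ over the punctured orbifold $M_\p$, modulo the deficiency space $\mathcal{D}_\p(\bg,\cg)$ (and, when $m=2$, the extra space $\mathcal{E}_\p$) and modulo the finite-dimensional kernel $\mathrm{span}\{\varphi_0,\dots,\varphi_d\}$. Applying this inverse recasts \eqref{eq:basegrezza2} as a fixed point equation $f=\mathcal{T}(f)$, where $\mathcal{T}$ acts on $f$ through the error of the approximate solution, namely $-\Lg[\Pbe]-\Lg[\hko]$ together with the scalar curvature discrepancy, and through the quadratic remainder $\mathbb{N}_\omega$. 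The full-rank condition in the hypothesis is precisely what lets one tune the parameters $(\bg,\cg)$ so that the projection of the error onto the cokernel --- i.e. the pairings $\int_M (\,\cdot\,)\,\varphi_i\,d\mu_\omega$ --- vanishes, which is the algebraic heart of the solvability.

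The technical core is then the chain of weighted estimates showing that $\mathcal{T}$ is a contraction on a small ball of $\Cqdd$ (respectively $\Cqddd$ for $m=2$), from which the claimed bound on $f$ and the estimate $|s_{\omega_{{\bf 0},\bg,\cg,\hg,\kg}}-s_\omega|\le \mathsf{C}\varepsilon^{2m}$ follow by inspection of the exponents. One estimates $\Lg[\hko]$ using the harmonicity of the biharmonic extensions off the gluing annulus and the smallness of the boundary data in $\dombd$; one estimates $\Lg[\Pbe]$ using that $\psi_{\eta_j}$ is a potential for the scalar flat model together with the decay $\psi_{\eta_j}^{(0)}=\mathcal{O}(|x|^{2-4m})$ from \eqref{eq: poteta}; and one controls $\mathbb{N}_\omega$ by its quadratic nature. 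The powers of $\varepsilon$ and $\rep$, tied by $\rep=\varepsilon^{(2m-1)/(2m+1)}$, are arranged so that each term lands in the asserted space with the stated decay. The only place where \cite{alm} appeals to a property of the model is the expansion \eqref{eq: poteta}, which holds for \emph{any} scalar flat ALE resolution with $e(\Gamma_j)=0$ and not merely for Ricci flat ones; hence the estimates transfer unchanged, and this is the observation underlying the statement.

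The main obstacle I anticipate is the bookkeeping of these weighted estimates in the borderline dimension $m=2$, where the model potential carries a logarithm and the fundamental solutions acquire $\log|z|$ contributions. This compels the introduction of the extra deficiency space $\mathcal{E}_\p$ and a more delicate projection in order to absorb the resulting indicial obstructions; away from this case the argument is a standard, if lengthy, application of perturbation theory once the contraction estimates are in place.
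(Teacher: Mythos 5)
Your proposal is correct and is essentially the paper's own argument: the paper proves Proposition \ref{crucialbase} by invoking the proof of \cite[Proposition 5.4]{alm}, with the single observation that Ricci flatness never enters that argument --- only scalar flatness and the expansion \eqref{eq: poteta} with its radial decay $\psi_{\eta}^{(0)}=\mathcal{O}(|x|^{2-4m})$ --- which is precisely the key observation closing your outline. Your reconstruction of the underlying scheme (the ansatz, reduction to \eqref{eq:basegrezza2}, right inverse of $\Lg$ modulo the deficiency spaces with the full-rank condition killing the kernel projections, the contraction estimates, and the extra space $\mathcal{E}_{\p}$ in the borderline case $m=2$) matches the structure of that argument as recalled in Section \ref{linearanalysis} and in the nonlinear analysis section.
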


\subsection{Extremal metrics on the truncated model spaces}\label{extanalisinonlinearemodello} 

As in the base case  we want to construct perturbations of $\eta$ on $X_{\Gamma,\frac{\Rep}{a}}$ of the form
\begin{equation}
\eta_{\tilde{\hg},\tilde{\kg}}:=\varepsilon^{2}a^{2}\eta+i\varepsilon^{2}\dd  F_{\tilde{\hg},\tilde{\kg}}^{inn}\qquad a\in \RR^{+}
\end{equation}
such that $\tilde{\eta}_{\tilde{h},\tilde{k}}$ is extremal. Again, by Proposition \ref{variazioneestremale} we need to find   $ \st F_{\tilde{\hg},\tilde{\kg}}^{inn}, c^{inn}\dt\in C^{\infty}\st X_{\Gamma,\frac{\Rep}{a}} \dt^{T}\times \RR$ that solve equation \eqref{eq:estremalecompatta} i.e. 
\begin{equation}\label{eq:estremaletroncatomodello1}
P_{\eta}^{*}P_{\eta}\sq F_{\tilde{\hg},\tilde{\kg}}^{inn}\dq 	 =-2\varepsilon^{4}a^{4} c^{inn}- 2\varepsilon^{4}a^{6}\left<\mu_{\eta},\tilde{X}_{s}+\tilde{X}^{out}\right>  -\varepsilon^{4}a^{4}J\st \tilde{X}_{s}+\tilde{X}^{out} \dt\sq F_{\tilde{\hg},\tilde{\kg}}^{inn}\dq+a^{2}\NN_{\eta}\st \frac{1}{a^{2}} F_{\tilde{\hg},\tilde{\kg}}^{inn} \dt
\end{equation}

\begin{remark}
We want to point out that equation \eqref{eq:estremalecompatta1} on $X_{\Gamma,\frac{\Rep}{a}}$ and consequently equation \eqref{eq:estremaletroncatomodello1} make sense only because of Proposition \ref{potenzialiHamiltoniani}. Indeed, to write equation  \eqref{eq:estremaletroncatomodello1}, we need the moment map $\mu_{\eta}$ that produces Hamiltonian potentials of holomorphic vector fields on $X_{\Gamma}$ and Proposition \ref{potenzialiHamiltoniani} precisely ensures the existence of such Hamiltonian potentials.   
\end{remark}

We look for  $F_{\tilde{\hg},\tilde{\kg}}^{inn}$ of the form
\begin{align}
F_{\tilde{\hg},\tilde{\kg}}^{inn}:=&{\bf H}_{\tilde{h},\tilde{k}}^{inn}  + f_{\tilde{\hg},\tilde{\kg}}^{inn}\\
\end{align}
with 
\begin{align}
{\bf H}_{\tilde{h},\tilde{k}}^{inn}:=&\chi H_{\tilde{h},\tilde{k}}^{inn}	\st\frac{a x}{\Rep} \dt \\
\end{align}

As usual, it is convenient to work on the complete model $X_{\Gamma}$ and to use the standard truncation/extension operators $\mathcal{E}_{\Rep}$, so  we want to find
 $ \st f_{\tilde{\hg},\tilde{\kg}}^{inn}, c^{inn}\dt\in C_{\delta}^{4,\alpha}\st X_{\Gamma} \dt^{T}\times \RR$ with $\delta\in (0,1)$ such that 
\begin{align}\label{eq:estremalemodello1}
P_{\eta}^{*}P_{\eta}\sq  f_{\tilde{\hg},\tilde{\kg}}^{inn}\dq 	 =& -\mathcal{E}_{\frac{\Rep}{a}}P_{\eta}^{*}P_{\eta}\sq {\bf H}_{\tilde{h},\tilde{k}}^{inn}\dq-2\mathcal{E}_{\frac{\Rep}{a}}\varepsilon^{4}a^{4} c^{inn}- 2\varepsilon^{4}a^{6}\mathcal{E}_{\frac{\Rep}{a}}\left<\mu_{\eta},X\right> \\
 &-\varepsilon^{4}a^{4}\mathcal{E}_{\frac{\Rep}{a}}J\st \tilde{X}_{s}+\tilde{X}^{out}\dt\sq {\bf H}_{\tilde{h},\tilde{k}}^{inn}  + f_{\tilde{\hg},\tilde{\kg}}^{inn}\dq+a^{2}\mathcal{E}_{\frac{\Rep}{a}}\NN_{\eta}\st \frac{1}{a^{2}} \st {\bf H}_{\tilde{h},\tilde{k}}^{inn}  + f_{\tilde{\hg},\tilde{\kg}}^{inn} \dt \dt\,.
\end{align}

Making use of Proposition \ref{isomorfismopesati} we can we rephrase equation   \eqref{eq:estremalemodello1} as a fixed point problem
\begin{align}
\st  f_{\tilde{\hg},\tilde{\kg}}^{inn},c^{inn}  \dt\,\,=&\,\,\mathcal{N}^{inn}\st f_{\tilde{\hg},\tilde{\kg}}^{inn},c^{inn},\tilde{\hg},\tilde{\kg} \dt
\end{align}

with 
\begin{align}
\mathcal{N}^{inn}\,:&\,C_{\delta}^{4,\alpha}\st X_{\Gamma} \dt^{T}\times \RR\times \mathcal{B}\st \kappa,\delta \dt\rightarrow C_{\delta}^{4,\alpha}\st X_{\Gamma} \dt^{T}\times \RR
\end{align}
nonlinear continuous operator. With computations analogous to those performed in \cite{aps} it is possible to obtain the following result.

\begin{prop}\label{famigliamodello}
Let $\st X_{\Gamma},h,\eta \dt$  be a scalar-flat ALE resolutions of $\CC^{m}/\Gamma$ with $T$-invariant metric $h$. Then for every $\st \varepsilon^{2}\tilde{h},\varepsilon^{2}\tilde{k} \dt\in \mathcal{B}\st \kappa,\delta \dt$  and  $a\in \RR^{+}$,there is $\st  f_{\tilde{\hg},\tilde{\kg}}^{inn},c^{inn} \dt\in C_{\delta}^{4,\alpha}\st X_{\Gamma} \dt^{T}\times \RR$
with
\begin{equation}
\left\|f_{\tilde{\hg},\tilde{\kg}}^{inn}\right\|_{C_{\delta}^{4,\alpha}\st X_{\Gamma} \dt}+|c^{inn}|\leq C\st \kappa \dt \Rep^{4-2m}
\end{equation}
such that 
\begin{equation}
\eta_{\hg,\kg}:=a^{2}\varepsilon^{2}\eta+i\varepsilon^{2}\dd\st  {\bf H}_{\tilde{h},\tilde{k}}^{inn}  + f_{\tilde{\hg},\tilde{\kg}}^{inn} \dt
\end{equation}
is an  extremal K\"ahler metric on $X_{\Gamma,\frac{\Rep}{a}}$  with extremal vector field $\tilde{X}_{s}+\tilde{X}^{out}$ which is the natural lift of the vector field $X_{s}+X^{out}$ defined in Proposition \ref{famigliabase}. 
\end{prop}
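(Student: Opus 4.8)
The plan is to follow the scheme of \cite{aps}, recasting the extremal equation as a fixed point problem for a contraction. First I would record that, since $\eta$ is scalar flat, the relevant linear operator simplifies: from $P_{\eta}^{*}P_{\eta}\sq f\dq=\Le\sq f\dq+\left<\nabla s_{\eta},\nabla f\right>$ and $s_{\eta}\equiv 0$ one gets $P_{\eta}^{*}P_{\eta}=\Le$, so that the mapping properties of Proposition \ref{isomorfismopesati} become available. Writing $F_{\tilde{\hg},\tilde{\kg}}^{inn}={\bf H}_{\tilde{h},\tilde{k}}^{inn}+f_{\tilde{\hg},\tilde{\kg}}^{inn}$ and transplanting the problem onto the complete model $X_{\Gamma}$ through the truncation/extension operator $\mathcal{E}_{\frac{\Rep}{a}}$, the extremality of $\eta_{\tilde{\hg},\tilde{\kg}}$ becomes equivalent to solving \eqref{eq:estremalemodello1}. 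Invoking the right inverse of $\Le$ in the appropriate weighted spaces, together with the extra unknown $c^{inn}$ that supplies the free parameter needed to meet the solvability condition for $\Le$, this equation is recast as the fixed point problem $\st f_{\tilde{\hg},\tilde{\kg}}^{inn},c^{inn}\dt=\mathcal{N}^{inn}\st f_{\tilde{\hg},\tilde{\kg}}^{inn},c^{inn},\tilde{\hg},\tilde{\kg}\dt$. It must be stressed that the mere presence of the Hamiltonian potential $\left<\mu_{\eta},\tilde{X}_{s}+\tilde{X}^{out}\right>$ on the right hand side is legitimate only thanks to Proposition \ref{potenzialiHamiltoniani}, which guarantees that the lift of the field $X_{s}+X^{out}$ of Proposition \ref{famigliabase} genuinely admits a Hamiltonian potential on the resolution.

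Next I would estimate the terms feeding $\mathcal{N}^{inn}$. The linear source $\mathcal{E}_{\frac{\Rep}{a}}\Le\sq {\bf H}_{\tilde{h},\tilde{k}}^{inn}\dq$ is supported where the cutoff $\chi$ varies, since $H_{\tilde{h},\tilde{k}}^{inn}$ is biharmonic, and its weighted norm is therefore controlled by the size of the boundary data $\st \varepsilon^{2}\tilde{h},\varepsilon^{2}\tilde{k}\dt\in \mathcal{B}\st \kappa,\delta\dt$, which produces the target scale $\Rep^{4-2m}$. The moment map contribution $2\varepsilon^{4}a^{6}\mathcal{E}_{\frac{\Rep}{a}}\left<\mu_{\eta},\tilde{X}_{s}+\tilde{X}^{out}\right>$ is bounded once one notes that on $X_{\Gamma,\frac{\Rep}{a}}$ the potential grows at most quadratically, while the prefactor $\varepsilon^{4}$ keeps it of lower order; the same applies to the constant term $\varepsilon^{4}a^{4}c^{inn}$ and to the derivative term built from $\varepsilon^{4}a^{4}J\st \tilde{X}_{s}+\tilde{X}^{out}\dt$. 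Finally the remainder $a^{2}\NN_{\eta}$ is quadratic in its argument, hence negligible on a small ball. Collecting these bounds shows that $\mathcal{N}^{inn}$ maps the ball of radius $C\st \kappa\dt\Rep^{4-2m}$ in $\Cdx^{T}\times \RR$ into itself once $\varepsilon$ is small enough.

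To conclude I would check that $\mathcal{N}^{inn}$ is a contraction: both $\NN_{\eta}$ and the vector field terms are Lipschitz in $\st f,c\dt$ with a constant that can be made arbitrarily small by shrinking $\varepsilon$, precisely as in the computations of \cite{aps}. The Banach fixed point theorem then produces a unique $\st f_{\tilde{\hg},\tilde{\kg}}^{inn},c^{inn}\dt$ obeying $\left\|f_{\tilde{\hg},\tilde{\kg}}^{inn}\right\|_{\Cdx}+|c^{inn}|\leq C\st \kappa\dt \Rep^{4-2m}$, and by construction the form $\eta_{\hg,\kg}=a^{2}\varepsilon^{2}\eta+i\varepsilon^{2}\dd\st {\bf H}_{\tilde{h},\tilde{k}}^{inn}+f_{\tilde{\hg},\tilde{\kg}}^{inn}\dt$ is an extremal K\"ahler metric whose extremal vector field is the prescribed lift $\tilde{X}_{s}+\tilde{X}^{out}$, since that field is frozen a priori in the equation.

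I expect the genuine obstacle to lie in controlling the moment map term within the weighted H\"older spaces. A priori a holomorphic vector field on an ALE space need not carry a decaying Hamiltonian potential, and it is only the special structure of the singularity $\CC^{m}/\Gamma$, encoded in Proposition \ref{potenzialiHamiltoniani}, that places $\left<\mu_{\eta},\tilde{X}_{s}+\tilde{X}^{out}\right>$ in a space compatible with the right inverse of $\Le$ and with the scale $\Rep^{4-2m}$. The remaining delicate point is the bookkeeping of the powers of $\varepsilon$, $a$ and $\Rep$ guaranteeing that this term, the biharmonic source and the nonlinear remainder all sit below the self-map threshold.
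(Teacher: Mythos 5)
Your proposal is correct and follows essentially the same route as the paper: the same ansatz $F^{inn}_{\tilde{\hg},\tilde{\kg}}={\bf H}^{inn}_{\tilde{h},\tilde{k}}+f^{inn}_{\tilde{\hg},\tilde{\kg}}$, the same reformulation of extremality as equation \eqref{eq:estremalemodello1} transplanted to $X_{\Gamma}$, the same fixed point recast via Proposition \ref{isomorfismopesati} (made meaningful by Proposition \ref{potenzialiHamiltoniani}), and the same deferral of the contraction estimates to the computations of \cite{aps}. The one inaccuracy, not fatal, is your claim that $\mathcal{E}_{\frac{\Rep}{a}}P_{\eta}^{*}P_{\eta}\sq {\bf H}^{inn}_{\tilde{h},\tilde{k}}\dq$ is supported only where $\chi$ varies: $H^{inn}_{\tilde{h},\tilde{k}}$ is biharmonic for the \emph{Euclidean} bilaplacian, not annihilated by $\Le$, so in the region where $\chi\equiv 1$ there remains the term $\st \Le-\Delta^{2}_{eucl}\dt\sq H^{inn}_{\tilde{h},\tilde{k}}\dq$, which is nonzero but is controlled at the same scale $\Rep^{4-2m}$ thanks to the ALE decay of the coefficients of $\Le-\Delta^{2}_{eucl}$.
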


As for the base case, an important consequence of Proposition \ref{famigliamodello} is that the following  estimate holds 
\begin{equation}
\left\| \left. \varepsilon^{2}f_{\tilde{\hg},\tilde{\kg}}^{inn}\st \frac{\Rep}{a}\cdot \dt \right|_{X_{\Gamma}\setminus X_{\Gamma,\frac{\Rep}{2a}} } \right\|_{C^{4,\alpha}\st \overline{B_{1}}\setminus B_{\frac{1}{2}} \dt} \leq \mathsf{C}\rep^{4}
\end{equation} 
with $\mathsf{C}$ positive constant depending only on $g,\eta$. Again, this kind of estimate is necessary for the success of the data matching procedure.

\subsection{Kcsc metrics on the truncated model spaces}\label{analisinonlinearemodello}

\noindent We now want to perform on the model spaces $X_{\Gamma_{j}}$'s and $Y_{\Gamma_{N+l}}$ a similar analysis as in the previous Subsection.  The constructions of the families of metrics are essentially the same made in \cite{alm} except for some complications due to the fact we require the models to be only scalar flat with $\ega=0$ and not necessarily Ricci-flat. These technical complications show up when we construct the {\em transplanted potential} and the {\em extensions of pseudo boundary data} and  are due to the presence of coefficients $\mathtt{c}_{0},\mathtt{c}_{2},\mathtt{c}_{3},\mathtt{c}_{4},\mathtt{c}_{5}$ relative to particular asymptotics of the potentials at infinity of the families of metrics which in the Ricci- flat case could be taken $\mathtt{c}_{0}=\mathtt{c}_{2}=\mathtt{c}_{3}=\mathtt{c}_{5}=0$ and $\mathtt{c_{4}}=-\frac{4\st m-1 \dt^{2}s_{\omega}}{m\st m+1\dt}$ . These coefficients, as we will see in the last section will influence the {\em balancing condition}.  

\begin{notation}
{\em To keep notations as short as possible we drop the subscripts $j$ and $l$. } 
\end{notation}
Our starting point are scalar-flat ALE K\"ahler manifold $\st X_{\Gamma},\eta,h \dt$ and $\st Y_{\Gamma},\theta,k \dt$ 
where we want to find $F_{\tilde{b},\tilde{h},\tilde{k}}^{in}\in C^{4,\alpha}\st X_{\Gamma,\frac{\Rep}{\tilde{b}}} \dt$ respectively $F_{\tilde{a},\tilde{h},\tilde{k}}^{in}\in C^{4,\alpha}\st Y_{\Gamma,\frac{\Rep}{\tilde{a}}} \dt$ with $\tilde{a},\tilde{b}\in \RR^{+}$ such that
\begin{equation}\theta_{\tilde{a},\tilde{h},\tilde{k}}:=\tilde{a}^{2}\theta+i\dd F_{\tilde{a},\tilde{h},\tilde{k}}^{in}\end{equation}
and
\begin{equation}\eta_{\tilde{b},\tilde{h},\tilde{k}}:=\tilde{b}^{2}\eta+i\dd F_{\tilde{b},\tilde{h},\tilde{k}}^{in}\end{equation}
are metrics on $Y_{\Gamma,\frac{\Rep}{\tilde{a}}}$ and $X_{\Gamma,\frac{\Rep}{\tilde{b}}}$. Moreover
\begin{equation}
\s_{\tilde{a}^{2}\theta}\st F_{\tilde{a},\tilde{h},\tilde{k}}^{in} \dt=\s_{\eta}\st F_{\tilde{b},\tilde{h},\tilde{k}}^{in} \dt=\varepsilon^{2}\st s_{\omega}+	\frac{1}{2}s_{\ag,{\bf 0},\cg,\hg,\kg}\dt
\end{equation}
when $\q\neq \emptyset$ and
\begin{equation}
\s_{\tilde{b}^{2}\eta}\st F_{\tilde{a},\tilde{h},\tilde{k}}^{in} \dt=\s_{\tilde{b}^{2}\eta}\st F_{\tilde{b},\tilde{h},\tilde{k}}^{in} \dt=\varepsilon^{2}\st s_{\omega}+	\frac{1}{2}s_{{\bf 0},\bg,\cg,\hg,\kg}\dt
\end{equation} 
when $\q=\emptyset$  with $\s_{\cdot}$  the operator introduced in \eqref{eq:espsg}.

\noindent The parameters $\tilde{a},\tilde{b}$ with the ``manual tuning" of the {\em principal asymptotics} and $\tilde{h}, \tilde{k}$ with the Cauchy data matching procedure. 
The modifications of the metrics $\eta$ and $\theta$   will be made of  blocks and it will  take different shapes if $\q\neq \emptyset$ or if $\q=\emptyset$. Indeed, if $\q \neq \emptyset$ then on $Y_{\Gamma}$ 

 \begin{equation}
F_{\tilde{a},\tilde{h},\tilde{k}}^{in}:= \hkii +f_{\tilde{a},\tilde{h},\tilde{k}}^{in}
\end{equation}

and on $X_{\Gamma}$

 \begin{equation}
F_{\tilde{b},\tilde{h},\tilde{k}}^{in}:= \hkii +f_{\tilde{b},\tilde{h},\tilde{k}}^{in}\,,
\end{equation}

if instead $\q=\emptyset$ then 

 \begin{equation}
F_{\tilde{b},\tilde{h},\tilde{k}}^{in}:=\Pbg+ \hkii +\csfii\,.
\end{equation}

The  term  $\Pbg$ is the transplanted potential of $\omega$   and comes into play when there are only $X_{\Gamma}$'s, $\hkii$ is the biharmonic extension of {\em pseudo-boundary data} $f_{\tilde{a},\tilde{h},\tilde{k}}^{in}, f_{\tilde{b},\tilde{h},\tilde{k}}^{in}$ are the perturbations ensuring the constancy of the scalar curvature. 

 \begin{itemize}

\item[] {\bf Transplanted potential.} If $\q=\emptyset$, then  we introduce the term $\Pbg$ that is a suitable modification of the function $\psi_{\omega}$ . Following exactly the same strategy of  \cite{alm} we look for functions $W_{4},W_{5}$ solutions of 
\begin{equation}
\begin{array}{ll}
\mathbb{L}_{\eta}\sq\Psi_{4}+W_{4}\dq&=-2s_{\omega}\\
&\\
\mathbb{L}_{\eta}\sq\Psi_{5}+W_{5}\dq&=0\,.
\end{array}
\end{equation}

\begin{notation}
 {\em For the rest of the subsection $\chi$ will denote a smooth cutoff function identically $0$ on $X_{\Gamma,\frac{R_{0}}{3\tilde{b}}}$ and identically $1$ outside $X_{\Gamma,\frac{R_{0}}{2\tilde{b}}}$.}
\end{notation}

\noindent We set 
\begin{equation}
\begin{array}{ll}
u_{4}&:= \begin{cases}
\st \frac{\Phi_{2}}{\Lambda_{2}^{2}}+\frac{\Phi_{4}}{\Lambda_{4}^{2}} \dt\chi|x|^{4-2m}&\textrm{for }m\geq3\\
&\\
\st \frac{\Phi_{2}}{\Lambda_{2}^{2}}+\frac{\Phi_{4}}{\Lambda_{4}^{2}} \dt\chi\log\st|x|\dt&\textrm{for }m=2
\end{cases}\\
&\\
u_{5}&:= \st \frac{\Phi_{3}}{\Lambda_{3}^{3}}+\frac{\Phi_{5}}{\Lambda_{5}^{2}} \dt\chi|x|^{5-2m}
\end{array}
\end{equation}

\noindent for a suitable choice of $\Phi_{2},\Phi_{4},\Phi_{3},\Phi_{5}$ eigenfunctions relative to the eigenvalues $\Lambda_{2},\Lambda_{4},\Lambda_{3},\Lambda_{5}$ of $\Delta_{\Sp^{2m-1}}$. 
Setting also
\begin{align}
\mathtt{c}_{4} := &\frac{|\Gamma|}{\cga |\Sp^{2m-1}|}\int_{X_{\Gamma}}\st\mathbb{L}_{\eta}\sq \chi \Psi_{4}+u_{4}\dq +2s_{\omega} \dt \, d\mu_{\eta}\label{eq:int4}\\
\mathtt{c}_{5}:= &\frac{|\Gamma|}{\cga |\Sp^{2m-1}|}\int_{X_{\Gamma}}\mathbb{L}_{\eta}\sq \chi \Psi_{5}+u_{5}\dq\,d\mu_{\eta} \label{eq:int5}
\end{align}
 we can find $v_{4}\in C_{\delta}^{4,\alpha}\st X_{\Gamma} \dt$ with $\delta\in (2-2m,3-2m)$ and $v_{5}\in C_{\delta}^{4,\alpha}\st X_{\Gamma} \dt$ with $\delta\in (3-2m,4-2m)$ such that 
\begin{displaymath}
\begin{array}{lcl}
\mathbb{L}_{\eta}\sq \chi \Psi_{4}+u_{4}-  \frac{\cga \mathtt{c}_{4}}{8\st m-2 \dt\st m-1 \dt}\chi |x|^{4-2m}+v_{4}\dq&=&-2s_{\omega}\qquad \textrm{ for }m\geq3\\
&&\\
\mathbb{L}_{\eta}\sq \chi \Psi_{4}+u_{4}+ \frac{\cga \mathtt{c_{4}}}{4}  \chi \log\st|x|\dt+v_{4}\dq&=&-2s_{\omega}\qquad \textrm{ for }m=2
\end{array}
\end{displaymath}

\begin{remark}
Contrarily to \cite{alm} here we do not have any information on constants $\mathtt{c}_{4},\mathtt{c}_{5}$, we do not know even their sign. If $X_{\Gamma}$ is Ricci-flat, as in \cite{alm}, one can to compute explicitly the constants $\mathtt{c}_{4}, \mathtt{c}_{5}$ and  show that $\mathtt{c}_{5}=0$ and $\mathtt{c}_{4}$ depends linearly on the scalar curvature of $M$ and nonlinearly on the dimension.   
\end{remark}
Now we can write the explicit expression of $W_{4}$
\begin{equation}
W_{4}:=\begin{cases}
-\frac{  \cga \mathtt{c}_{4} \tilde{b}^{4}\varepsilon^{2} }{8 \st m-2 \dt  \st m-1 \dt }\chi |x|^{4-2m}+u_{4}+v_{4}&\qquad \textrm{ for }m\geq3\,,\\
&\\
\frac{  \cga \mathtt{c}_{4}\tilde{b}^{4}\varepsilon^{2} }{4 }\chi \log\st|x|\dt+u_{4}+v_{4} & \qquad \textrm{ for }m=2\,.
\end{cases}
\end{equation}

\noindent  Analogously to the case of   $\Psi_{4}$ the correction $W_{5}$ of $\Psi_{5}$ is then
\begin{equation}
W_{5}:=\begin{cases}
-\frac{  \cga \mathtt{c}_{5} \tilde{b}^{5}\varepsilon^{3} }{8 \st m-2 \dt  \st m-1 \dt }\chi |x|^{4-2m}+u_{5}+v_{5}&\qquad \textrm{ for }m\geq3\,,\\
&\\
\frac{  \cga \mathtt{c}_{5}\tilde{b}^{5}\varepsilon^{3} }{4 }\chi \log\st|x|\dt+u_{5}+v_{5} & \qquad \textrm{ for }m=2\,.
\end{cases}
\end{equation}
 If we define  
\begin{equation}
V:=\varepsilon^{2}\tilde{b}^{4}W_{4}+\varepsilon^{3}\tilde{b}^{5}W_{5} \,.
\end{equation}
\noindent then we can define the transplanted potential $\Pbg$ as the function in $C^{4,\alpha}\st X_{\Gamma,\frac{\Rep}{\tilde{b}}} \dt$
\begin{equation}
\Pbg:=\begin{cases}
\frac{1}{\varepsilon^{2}}\chi\psi_{\omega}\st \tilde{b}\varepsilon x\dt +V&\qquad \textrm{ for }m\geq3\,,\\
&\\
\frac{1}{\varepsilon^{2}}\chi\psi_{\omega}\st \tilde{b}\varepsilon x \dt+V+C& \qquad \textrm{ for }m=2\,.
\end{cases}\label{eq:transplantedmod} 
\end{equation}
where $C$ is the constant term in the expansion  at $B_{2r_{0}}\st p \dt\setminus B_{\rep}\st p \dt$ of  
\begin{equation}
F_{{\bf 0},\bg,\cg,\hg,\kg}^{out}= - \varepsilon^{2m}{{\bf{G}}}_{\bf{0},\bg,\cg}+\Pbe+\hko+ f_{\bf{0},\bg,\cg,\hg,\kg}^{out}\,.
\end{equation}
introduced in Proposition \ref{crucialbase}.

\item[] {\bf Extensions of {\em pseudo-boundary data}.} Also this term takes different forms whether $\q=\emptyset$ or not.  If $\q\neq \emptyset$ then we define $\hkii\in C^{4,\alpha}\st Y_{\Gamma},\frac{\Rep}{\tilde{a}} \dt$ and $\hkii\in C^{4,\alpha}\st X_{\Gamma},\Rep \dt$ as
\begin{equation}\label{eq:thki2}
\hkii:=H_{\tilde{h},\tilde{k}}^{in}\st 0\dt+ \chi\st H_{\tilde{h},\tilde{k}}^{in}\st \frac{ \tilde{b} x}{\Rep} \dt-  H_{\tilde{h},\tilde{k}}^{in}\st 0\dt\dt\,.
\end{equation}
If instead $\q=\emptyset$ we need the construction of $\hkii$ performed in \cite{alm}.  Indeed, as for the transplanted potential, we look for functions $W_{0},W_{2},W_{3}$ for the equations
\begin{equation}
\begin{array}{ll}
\mathbb{L}_{\eta}\sq\chi|x|^{2}+W_{0}\dq&= 0\,, \\
&\\
\mathbb{L}_{\eta}\sq \chi|x|^{2}\Phi_{2}+W_{2}\dq&=0\,,\\
&\\
\mathbb{L}_{\eta}\sq\chi|x|^{3}\Phi_{3}+ W_{3}\dq&= 0\,.
\end{array}
\end{equation}
with $W_{0},W_{2},W_{3}$ having a structure similar to $W_{4},W_{5}$  we built for the transplanted potential. Indeed, once we set
\begin{align}
\mathtt{c}_{0}:=&\frac{|\Gamma|}{\cga |\Sp^{2m-1}|}\int_{X_{\Gamma}}\mathbb{L}_{\eta}\sq\chi|x|^{2}\dq d\mu_{\eta} \label{eq:c1}\\
\mathtt{c}_{2}:=&\frac{|\Gamma|}{\cga |\Sp^{2m-1}|}\int_{X_{\Gamma}}\mathbb{L}_{\eta}\sq \chi|x|^{2}\Phi_{2}\dq d\mu_{\eta}\label{eq:c2}\\
\mathtt{c}_{3}:=&\frac{|\Gamma|}{\cga |\Sp^{2m-1}|}\int_{X_{\Gamma}}\mathbb{L}_{\eta}\sq\chi|x|^{3}\Phi_{3}+ u^{(3)} \dq d\mu_{\eta}\label{eq:c3}\,
\end{align}
with \begin{equation}
u^{(3)}:=\chi|x|^{3-2m} \tilde{\Phi}_{3}
\end{equation}
\noindent for a suitable  spherical harmonic $\tilde{\Phi}_{3}$, we define 
\begin{equation}
W_{0}:=\begin{cases}
-\frac{\cga\mathtt{c}_{0}}{8\st m-2 \dt\st  m-1 \dt}\chi|x|^{4-2m}+v^{(0)}&m\geq 3\\
&\\
\frac{\cga\mathtt{c}_{0}}{4}\chi|x|^{4-2m}+v^{(0)}&m=2
\end{cases}
\end{equation}
\begin{equation}
W_{2}:=\begin{cases}
\frac{\cga\mathtt{c}_{2}}{8\st m-2 \dt\st  m-1 \dt}|x|^{4-2m}+v^{(2)}&m\geq 3\\
&\\
-\frac{\cga\mathtt{c}_{2}}{8}\chi|x|^{4-2m}+v^{(2)}&m= 2
\end{cases}
\end{equation}
\begin{equation}
W_{3}:=\begin{cases}
-\frac{\cga\mathtt{c}_{3}}{8\st m-2 \dt\st  m-1 \dt}|x|^{4-2m}+ u^{(3)} +v^{(3)}&m\geq 3\\
&\\
\frac{\cga\mathtt{c}_{3}}{4}|x|^{4-2m}+ u^{(3)} +v^{(3)}&m\geq 3\\
\end{cases}
\end{equation}

with $v^{(0)},v^{(2)},v^{(3)}\in C_{\delta}^{4,\alpha}\st X_{\Gamma} \dt$ for $\delta\in (2-2m,3-2m)$ . 

\begin{remark}
Here we see a technical complication due to the fact we ask $X_{\Gamma}$ to be only scalar flat with $\ega=0$ and not Ricci flat.
 Indeeed if $X_{\Gamma}$ were Ricci-flat, as in \cite{alm}, then the constants $\mathtt{c}_{0},\mathtt{c}_{2},\mathtt{c}_{3}$ would vanish, making the behaviour of the $W_{j}$'s easier.
\end{remark}

Moreover we set
\begin{equation}
V':=\frac{\tilde{k}^{(0)}\tilde{b}^2}{4m \Rep^2}W_{0} + \st \tilde{h}^{(2)}-\frac{\tilde{k}^{(2)}}{4(m+2)} \dt \frac{\tilde{b}^{2}}{\Rep^2}W_{2}+ \frac{\tilde{b}^{3}}{\Rep^3}\st \tilde{h}^{(3)}-\frac{\tilde{k}^{(3)}}{4(m+3)}  \dt  W_{3}
\end{equation}
and hence we can introduce the   function $\hkii \in C^{4,\alpha}(X_{\Gamma,\frac{\Rep}{\tilde{b}}})$ 
\begin{align}
\hkii :=& H_{\tilde{h},\tilde{k}}^{in}\st 0\dt+ \chi\st H_{\tilde{h},\tilde{k}}^{in}\st \frac{ \tilde{b} x}{\Rep} \dt-  H_{\tilde{h},\tilde{k}}^{in}\st 0\dt\dt+V'\,.\label{eq:thki}
\end{align}

\item[] {\bf Correction term.} The terms $f_{\tilde{a},\tilde{h},\tilde{k}}^{in}$ and $f_{\tilde{b},\tilde{h},\tilde{k}}^{in}$  that ensure the constancy of the scalar curvature of the metrics $\theta_{\tilde{a},\tilde{h},\tilde{k}}$ and of $\eta_{\tilde{b},\tilde{h},\tilde{k}}$ on $X_{\Gamma,\Rep}$  and are solutions of a  fixed point problem on a suitable closed and bounded subspace of $\Cdx$.
\end{itemize} 

\noindent We are now ready to state the main results on the model spaces.

\begin{prop}\label{crucialmodelloUm}
Let $\q\neq \emptyset$ and let $(X_{\Gamma},h,\eta )$ and $(Y_{\Gamma},k,\theta )$  ALE  scalar-flat K\"ahler resolutions of $\CC^{m}/\Gamma$ with $\Gamma$ finite subgroup of $U(m)$.
Let $\delta\in (0,1)$. Given any $\st \tilde{h},\tilde{k} \dt\in \mathcal{B}$, such that $\st \varepsilon^{2}\tilde{h},\varepsilon^{2}\tilde{k} \dt\in \dombd$, where $\dombd$ is the space defined in formula \eqref{eq:dombd},  let $\hkii$ be the function defined in formula \eqref{eq:thki2}. 
\begin{align}
\hkii :=& H_{\tilde{h},\tilde{k}}^{in}\st 0\dt+ \chi\st H_{\tilde{h},\tilde{k}}^{I}\st \frac{ \tilde{b} x}{\Rep} \dt-  H_{\tilde{h},\tilde{k}}^{I}\st 0\dt\dt\,.
\end{align}

Then there is $f_{\tilde{a},\tilde{h},\tilde{k}}^{in}\in C_{\delta}^{4,\alpha}\st Y_{\Gamma} \dt$ and $f_{\tilde{b},\tilde{h},\tilde{k}}^{in}\in C_{\delta}^{4,\alpha}\st X_{\Gamma} \dt$  such that 
\begin{equation}
\theta_{\tilde{a},\tilde{h},\tilde{k}}=\tilde{a}^{2}\theta+i\dd\st  \hkii + f_{\tilde{a},\tilde{h},\tilde{k}}^{in}  \dt
\end{equation}
is a Kcsc metric on $Y_{\Gamma,\frac{\Rep}{\tilde{a}}}$ and 
\begin{equation}
\eta_{\tilde{h},\tilde{k}}=\tilde{b}^{2}\eta+i\dd\st  \hkii + f_{\tilde{b},\tilde{h},\tilde{k}}^{in}  \dt
\end{equation}
is a Kcsc metric on $X_{\Gamma,\frac{\Rep}{\tilde{b}}}$. Moreover the following estimates hold.
\begin{equation}
\left\|\csfii\right\|_{\Cdx},\left\|f_{\tilde{a},\tilde{h},\tilde{k}}^{in}\right\|_{C_{\delta}^{4,\alpha}\st Y_{\Gamma }\dt}\leq \mathsf{C} \varepsilon^{2}\Rep^{4-\delta}
\end{equation}
 Moreover $s_{\eta_{\tilde{b},\tilde{h},\tilde{k}}} $ and $s_{\theta_{\tilde{a},\tilde{h},\tilde{k}}}$ satisfy
\begin{equation}
s_{\eta_{\tilde{h},\tilde{k}}}=s_{\theta_{\tilde{a},\tilde{h},\tilde{k}}} =s_{\omega_{\ag, {\bf 0},\cg,\hg,\kg}}\,.
\end{equation}
\end{prop}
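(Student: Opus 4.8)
The plan is to follow the scheme of the base case (Proposition \ref{crucialbaseq}) and of \cite{ap2}: reduce the prescribed--scalar--curvature problem to a nonlinear equation for a correction term and solve it by a contraction argument. Fix $\tilde a,\tilde b\in\RR^{+}$ and an admissible pair $\st\tilde h,\tilde k\dt$ satisfying the size constraint $\st\varepsilon^{2}\tilde h,\varepsilon^{2}\tilde k\dt\in\dombd$. Since $\eta$ and $\theta$ are scalar flat, the rescaled metrics $\tilde b^{2}\eta$ and $\tilde a^{2}\theta$ are scalar flat as well. Writing $F=\hkii+f$, with $\hkii$ the fixed biharmonic model \eqref{eq:thki2}, and inserting this into the expansion \eqref{eq:espsg} for $\tilde b^{2}\eta$ (respectively $\tilde a^{2}\theta$), the requirement that the scalar curvature equal the target constant $s_{\omega_{\ag,{\bf 0},\cg,\hg,\kg}}$ produced in Proposition \ref{crucialbaseq} becomes, after multiplying by the appropriate power of the scaling parameter, an equation of the schematic form
\[
\mathbb{L}_{\eta}\sq f\dq \,=\, E\,-\,\mathbb{L}_{\eta}\sq \hkii\dq\,+\,\mathbb{N}_{\eta}\st \hkii+f\dt\,,
\]
and similarly on $Y_{\Gamma}$ with $\eta,\tilde b$ replaced by $\theta,\tilde a$. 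Here $E$ gathers the (truncated) target scalar curvature and is a term of size $\mathcal{O}\st\varepsilon^{2}\dt$. One works on the complete resolution by composing every term with the truncation/extension operator $\mathcal{E}_{\Rep}$, so that the right--hand side is globally defined and supported in $\{|x|\lesssim\Rep\}$.

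Next I would invert the linear part. By Proposition \ref{isomorfismopesati} --- together, for $m\ge 3$, with the standard relative index computation across the indicial root at $0$ --- the operator $\mathbb{L}_{\eta}^{(\delta)}:\Cdx\longrightarrow\Ccx$ admits for $\delta\in(0,1)$ a bounded right inverse, whose one--dimensional kernel consists of the constants; as constants do not alter $i\dd f$, they are irrelevant for the metric and no solvability condition is imposed. The same holds for $\mathbb{L}_{\theta}$ on $Y_{\Gamma}$. Composing with this right inverse recasts the equation as a fixed point problem $f=\mathcal{T}(f)$, with $\mathcal{T}(f):=\mathbb{L}_{\eta}^{-1}\big[E-\mathbb{L}_{\eta}[\hkii]+\mathbb{N}_{\eta}(\hkii+f)\big]$, posed on a closed ball of $\Cdx$.

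Finally I would run the weighted estimates, which is where the real work lies. The dominant contribution to the right--hand side is the target scalar curvature of size $\mathcal{O}\st\varepsilon^{2}\dt$: after truncation to $X_{\Gamma,\frac{\Rep}{\tilde b}}$, a bounded function supported in $\{|x|\lesssim\Rep\}$ has $\Ccx$--norm of order $\varepsilon^{2}\Rep^{4-\delta}$, since $\sup_{R\le\Rep}R^{4-\delta}\sim\Rep^{4-\delta}$; applying the bounded right inverse is exactly what produces the claimed bound $\mathsf{C}\varepsilon^{2}\Rep^{4-\delta}$. The linear error $\mathbb{L}_{\eta}[\hkii]$ is controlled by the ALE decay rate $\tau$ and by the constraint $\dombd$ on $\st\tilde h,\tilde k\dt$ and is of strictly lower order, while $\mathbb{N}_{\eta}$ is quadratically small in $f$ and in the data; hence, for $\varepsilon$ small, $\mathcal{T}$ maps the ball of radius $\mathsf{C}\varepsilon^{2}\Rep^{4-\delta}$ into itself and is a contraction there, and the Banach fixed point theorem yields $\csfii\in\Cdx$ and $f_{\tilde a,\tilde h,\tilde k}^{in}\in C_{\delta}^{4,\alpha}\st Y_{\Gamma}\dt$ with the stated estimates. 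By construction the scalar curvatures of $\eta_{\tilde h,\tilde k}$ and $\theta_{\tilde a,\tilde h,\tilde k}$ both equal the prescribed constant $s_{\omega_{\ag,{\bf 0},\cg,\hg,\kg}}$. I expect the main obstacle to be precisely this estimate step --- tracking the correct powers of $\varepsilon,\Rep,\tilde a,\tilde b$ in the error and nonlinear terms and verifying the contraction --- whereas assuming only $e\st\Gamma\dt=0$ rather than Ricci flatness causes no difficulty here, because in the case $\q\neq\emptyset$ the model $\hkii$ of \eqref{eq:thki2} needs none of the auxiliary corrections $W_{0},W_{2},W_{3}$ required when $\q=\emptyset$.
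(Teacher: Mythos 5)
Your proposal is correct and follows essentially the same route as the paper, whose proof of this proposition is just the remark that it is an ``easy adaptation of \cite[Lemma 5.3]{ap2}'' --- i.e.\ precisely the fixed-point scheme you describe: write the metric as $\tilde{b}^{2}\eta+i\dd(\hkii+f)$, invert $\mathbb{L}_{\eta}$ on weighted H\"older spaces, and contract, with the $\mathcal{O}(\varepsilon^{2})$ truncated target curvature giving the dominant term $\varepsilon^{2}\Rep^{4-\delta}$. Your observation that for $m\geq 3$ the surjectivity of $\mathbb{L}_{\eta}^{(\delta)}$ with $\delta\in(0,1)$ is not literally contained in Proposition \ref{isomorfismopesati} and must be obtained by crossing the indicial root at $0$ (picking up only the constants in the kernel) is a correct clarification of a point the paper leaves implicit.
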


The proof of Proposition \ref{crucialmodelloUm} is an esay adaptation of \cite[Lemma 5.3 ]{ap2}.

\begin{prop}\label{crucialmodello}
Let $\q=\emptyset$ and let $(X_{\Gamma},h,\eta )$ be a scalar flat ALE   K\"ahler resolution of $\CC^{m}/\Gamma$ with $\Gamma$ finite subgroup of $U(m)$ and $\ega=0$.

\begin{itemize}
\item    Let $\delta\in (4-2m,5-2m)$. Given any $\st \tilde{h},\tilde{k} \dt\in \mathcal{B}$, such that $\st \varepsilon^{2}\tilde{h},\varepsilon^{2}\tilde{k} \dt\in \dombd$, where $\dombd$ is the space defined in formula \eqref{eq:dombd},  let $\hkii$ be the function defined in formula \eqref{eq:thki}. 
\begin{align}
\hkii :=& H_{\tilde{h},\tilde{k}}^{in}\st 0\dt+ \chi\st H_{\tilde{h},\tilde{k}}^{in}\st \frac{ \tilde{b} x}{\Rep} \dt-  H_{\tilde{h},\tilde{k}}^{in}\st 0\dt\dt+V'\,.
\end{align}
\item Let $\Pbg$ be the transplanted potential defined in formula \eqref{eq:transplantedmod}
\begin{equation}
\Pbg:=\begin{cases}
\frac{1}{\varepsilon^{2}}\chi\psi_{\omega}\st \tilde{b}\varepsilon x\dt +V&\qquad \textrm{ for }m\geq3\,,\\
&\\
\frac{1}{\varepsilon^{2}}\chi\psi_{\omega}\st \tilde{b}\varepsilon x \dt+V+C& \qquad \textrm{ for }m=2\,.
\end{cases} 
\end{equation}
\end{itemize}
Then there is $\csfii\in \Cdx$  such that 
\begin{equation}
\etat=\tilde{b}^{2}\eta+i\dd\st \Pbg+ \hkii +\csfii  \dt
\end{equation}
is a Kcsc metric on $X_{\Gamma,\frac{\Rep}{\tilde{b}}}$ and the following estimate holds.
\begin{equation}
\left\|\csfii\right\|_{\Cdx}\leq C\st\kappa \dt \varepsilon^{2m+4}\rep^{-4m-\delta}\Rep^{-2}
\end{equation}
with $C\st \kappa \dt\in \RR^{+}$ depending only on $\omega$ and $\eta_{j}$'s and $\kappa$ the constant appearing in the definition of $\dombd$ (Section \ref{biharmonicext} formula \ref{eq:dombd} ). Moreover $s_{\etat}$, the scalar curvature of $\etat$ is 
\begin{equation}
s_{\etat}=s_{\omega_{{\bf 0},\bg,\hg,\kg}}\,.
\end{equation}
\end{prop}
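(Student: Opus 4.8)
The plan is to obtain $\csfii$ by directly imposing the prescribed constant scalar curvature and then solving the resulting semilinear equation through a fixed point scheme built on the right inverse of $\Le$ supplied by Proposition \ref{isomorfismopesati}. Using the expansion \eqref{eq:espsg} for the scalar curvature operator together with the fact that $\eta$ is scalar flat, so that $\s_{\tilde{b}^{2}\eta}\st 0 \dt=0$, the condition
\[
\s_{\tilde{b}^{2}\eta}\st \Pbg+\hkii+\csfii \dt=\varepsilon^{2}\st s_{\omega}+\tfrac{1}{2}s_{{\bf 0},\bg,\cg,\hg,\kg}\dt
\]
is, after multiplying by $-2\tilde{b}^{4}$ and using the scaling $\mathbb{L}_{\tilde{b}^{2}\eta}=\tilde{b}^{-4}\Le$, equivalent to
\[
\Le\sq \csfii \dq=-\varepsilon^{2}\tilde{b}^{4}\st 2s_{\omega}+s_{{\bf 0},\bg,\cg,\hg,\kg}\dt-\Le\sq \Pbg+\hkii \dq+\tilde{b}^{4}\NN_{\tilde{b}^{2}\eta}\st \Pbg+\hkii+\csfii \dt\,.
\]

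First I would observe that the building blocks $\Pbg$ and $\hkii$ have been tailored precisely so that the linear source term on the right is small in $\Ccx$. By construction one has $\Le\sq \chi\Psi_{4}+W_{4}\dq=-2s_{\omega}$ and $\Le\sq \chi\Psi_{5}+W_{5}\dq=0$, so the transplanted potential cancels the leading $-2\varepsilon^{2}\tilde{b}^{4}s_{\omega}$ contribution up to the controlled remainders carried by $v_{4},v_{5}$ and the constants $\mathtt{c}_{4},\mathtt{c}_{5}$; similarly $\hkii$ is biharmonic away from the core, so $\Delta_{\eta}^{2}\hkii=0$ there and the only contribution to $\Le\hkii$ is the curvature correction $4\langle \rho_{\eta}\,|\,i\dd\,\cdot\,\rangle$, which decays on the ALE end, its residual asymptotics being absorbed by $W_{0},W_{2},W_{3}$ and $v^{(0)},v^{(2)},v^{(3)}$. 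I would then set $\mathcal{G}_{\eta}=\big(\Le^{(\delta)}\big)^{-1}$, the (right) inverse of Proposition \ref{isomorfismopesati} on the weight range $\delta\in(4-2m,5-2m)$ — a genuine inverse for $m\geq3$ and a right inverse normalised away from the constants for $m=2$ — and recast the equation as the fixed point problem
\[
\csfii=\mathcal{G}_{\eta}\sq -\varepsilon^{2}\tilde{b}^{4}\st 2s_{\omega}+s_{{\bf 0},\bg,\cg,\hg,\kg}\dt-\Le\sq \Pbg+\hkii \dq+\tilde{b}^{4}\NN_{\tilde{b}^{2}\eta}\st \Pbg+\hkii+\csfii \dt \dq
\]
on the closed ball of radius $\mathsf{C}\st \kappa \dt\,\varepsilon^{2m+4}\rep^{-4m-\delta}\Rep^{-2}$ in $\Cdx$.

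The estimates would proceed term by term. The constant source and the linear residual $\Le\sq \Pbg+\hkii \dq$ are bounded in $\Ccx$ using the ALE decay rates and the size constraints encoded in $\st \varepsilon^{2}\tilde{h},\varepsilon^{2}\tilde{k} \dt\in\dombd$, while the nonlinear remainder $\NN_{\tilde{b}^{2}\eta}$ is handled by the usual quadratic estimate together with the a priori smallness of $\Pbg+\hkii+\csfii$. Since $\mathcal{G}_{\eta}$ is bounded between the weighted spaces, these bounds show that the right-hand operator maps the ball into itself and is a contraction, so a unique fixed point $\csfii$ exists and inherits the stated norm bound. Finally, the identity $s_{\etat}=s_{\omega_{{\bf 0},\bg,\hg,\kg}}$ is automatic: the equation was set up exactly so that the scalar curvature of $\etat=\tilde{b}^{2}\eta+i\dd\st \Pbg+\hkii+\csfii \dt$ equals the constant value prescribed from the base construction of Proposition \ref{crucialbase}.

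The main obstacle is the weighted bookkeeping around the constants $\mathtt{c}_{0},\mathtt{c}_{2},\mathtt{c}_{3},\mathtt{c}_{4},\mathtt{c}_{5}$. In the Ricci flat setting of \cite{alm} all of these (except an explicitly computable $\mathtt{c}_{4}$) vanish, so $\Pbg$ and $\hkii$ have cleaner expansions; here they are genuinely present and of unknown sign, and one must verify that the extra $\chi|x|^{4-2m}$ terms (respectively $\chi\log|x|$ when $m=2$) they introduce are absorbed by the correcting functions $v_{4},v_{5},v^{(0)},v^{(2)},v^{(3)}$ so that $\Le\sq \Pbg+\hkii \dq$ still lands in $\Ccx$ with decay sharp enough to yield the target bound on $\csfii$. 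Ensuring that no source term of decay slower than the weight $\delta-4$ survives is the delicate point; once this is checked, the contraction argument follows the lines of \cite[Proposition 5.4]{alm}, with Ricci flatness nowhere used.
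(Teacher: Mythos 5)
Your proposal is correct and follows essentially the same route as the paper: the paper's proof is precisely the fixed-point scheme of \cite[Lemmata 5.13, 5.14, 5.15]{alm} applied to the equation $\Le\sq \csfii\dq=-\varepsilon^{2}\tilde{b}^{4}\st 2s_{\omega}+s_{{\bf 0},\bg,\cg,\hg,\kg}\dt-\Le\sq \Pbg+\hkii\dq+\tilde{b}^{4}\NN_{\tilde{b}^{2}\eta}\st \Pbg+\hkii+\csfii\dt$, inverted via Proposition \ref{isomorfismopesati} on the ball of the stated radius. Moreover, the delicate point you single out --- that the terms generated by the nonvanishing constants $\mathtt{c}_{0},\mathtt{c}_{2},\mathtt{c}_{3},\mathtt{c}_{5}$ must be absorbed by the corrections $W_{j}$, $v_{4},v_{5},v^{(0)},v^{(2)},v^{(3)}$ so that the source still lies in $\Ccx$ with the right decay --- is exactly the observation the paper gives as its entire proof (``the estimates in \cite{alm} hold also in this case precisely due to the choice of $\mathtt{c}_{0},\mathtt{c}_{2},\mathtt{c}_{3},\mathtt{c}_{5}$'').
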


The proof of Proposition \ref{crucialmodello} follows observing that the estimates in Lemmata 5.13, 5.14,5.15 in \cite{alm} hold also in this case precisely due to the choice of $\mathtt{c}_{0},\mathtt{c}_{2},\mathtt{c}_{3},\mathtt{c}_{5}$.

\section{Data matching}\label{matching}

\subsection{The extremal case}

\noindent We can now complete the proof of the following

\begin{teo}\label{maintheoremestremale}
Let $\st M,g,\omega\dt$ be a compact extremal orbifold with  $T$-invariant metric $g$. Let $\p$,$\q$ be as above. Then  there is $\bar{\varepsilon}$ such that for every $\varepsilon \in (0,\bar{\varepsilon})$ and $\bg\in \st\RR^{+}\dt^{N}$ and $\ag\in \st \RR^{+} \dt^{K}$ the orbifold
\[
\tilde{M} : = M \sqcup _{{p_{1}, \varepsilon}} X_{\Gamma_1} \sqcup_{{p_{2},\varepsilon}} \dots
\sqcup _{{p_N, \varepsilon}} X_{\Gamma_N}\sqcup _{{q_{1}, \varepsilon}} Y_{\Gamma_{N+1}} \sqcup_{{q_{2},\varepsilon}} \dots
\sqcup _{{q_K, \varepsilon}} Y_{\Gamma_{N+K}}
\]

\noindent has a $\tilde{T}$-invariant extremal K\"ahler metric in the class
\begin{equation} 
 \pi^*[\omega] + \sum_{l=1}^K\varepsilon^{2m-2} \tilde{a}_{l} ^{2m-2}[\tilde{\theta_{l}}] + \sum_{j=1}^N\varepsilon^{2m}b_{j} [\tilde{\eta_j}]
 \end{equation}   
where $\mathfrak{i}_{l}^{*}\sq \tilde{\theta}_{l} \dq=[\theta_{l}]$ with  $\mathfrak{i}_{l}:Y_{\Gamma_{N+l},\Rep}\hookrightarrow \tilde{M}$ the standard inclusion (and analogously for $\tilde{\eta}_{j}$).
\end{teo}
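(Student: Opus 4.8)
The plan is to perform the Cauchy data matching between the single family of extremal metrics on the truncated base orbifold $M_{\rep}$ constructed in Proposition~\ref{famigliabase} and the families of extremal metrics on the truncated models $X_{\Gamma_j}$ and $Y_{\Gamma_{N+l}}$ constructed in Proposition~\ref{famigliamodello}. Each of these metrics is \emph{already} extremal on its own piece for an arbitrary choice of pseudo-boundary data $\st \hg,\kg \dt$, resp. $\st \tilde\hg,\tilde\kg \dt$, in the ball $\mathcal{B}\st \kappa,\delta \dt$ and of a scaling parameter (playing the role of $b_j$ on each $X_{\Gamma_j}$ and $a_l$ on each $Y_{\Gamma_{N+l}}$), so the remaining work is purely one of gluing. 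Concretely, I would select all the pseudo-boundary data so that, after the identifications $z=\rep\, w$ on the base side and $x=\frac{\Rep}{b_j}w$ (resp. $x=\frac{\Rep}{a_l}w$) on the model side, the two \K\ potentials agree in $C^{4,\alpha}\st \overline{B_2}\setminus B_1 \dt$ on every annular overlap, modulo the pluriharmonic ambiguity that does not affect the metric. Recording the difference of the potentials in the overlap coordinate $w$ defines a nonlinear matching map whose zeros are exactly the admissible gluing data.

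I would analyze this matching map mode by mode in the Fourier decomposition on $\Sp^{2m-1}$. The explicit series \eqref{eq:bihout2} and \eqref{eq:bihout3} show that on each eigenspace $\phi_\gamma$ with $\gamma\geq 2$ the difference of Cauchy data is an invertible linear function of the corresponding coefficients of $\st \hg,\kg \dt$ and $\st \tilde\hg,\tilde\kg \dt$, with inverse controlled by the overlap bounds $\le \mathsf{C}\rep^4$ recorded right after Propositions~\ref{famigliabase} and~\ref{famigliamodello}; the degree-one mode carries no data because of the $\Gamma$-invariance. The constant mode is the one tied to the scaling parameters and to the leading asymptotics $\frac{|x|^2}{2}+e\st \Gamma \dt|x|^{4-2m}-c\st \Gamma \dt|x|^{2-2m}$ of \eqref{eq:eg}; it is here that the distinction between $\p$-points (where $e\st \Gamma_j \dt=0$, so the first correction is the $|x|^{2-2m}$ term) and $\q$-points (where $e\st \Gamma_{N+l} \dt\neq 0$) forces the two different powers $\varepsilon^{2m}$ and $\varepsilon^{2m-2}$ in the final \K\ class, and where the free parameters $\bg\in \st \RR^+ \dt^N$, $\ag\in \st \RR^+ \dt^K$ get fixed into the coefficients $b_j$ and $\tilde a_l$.

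The crucial structural point, and the reason the \emph{extremal} problem is unobstructed, concerns the residual piece of the matching that projects onto $\ker\st \mathcal{P}_{\omega,\delta} \dt/\RR=\mathfrak{t}$ from Proposition~\ref{linearizzatoestremalebase}. On the base the extremal vector field is free to move to $X_s+X^{out}$, while on each model Proposition~\ref{famigliamodello} forces the extremal field to be the lift $\tilde X_s+\tilde X^{out}$; the two descriptions are compatible precisely because Proposition~\ref{potenzialiHamiltoniani} supplies, on every model, an honest Hamiltonian potential $\left< \mu_\eta,\tilde X \right>$ for the lifted field, with the matching relation $\overline\partial\left< \mu_\eta,\tilde X \right>=\tilde X\,\lrcorner\,\eta$. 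Consequently the $\mathfrak{t}$-component of the matching equations is solved by \emph{adjusting} $X^{out}$ rather than by imposing a balancing condition on the points — in sharp contrast with the Kcsc problem of Theorem~\ref{maintheorem}. I expect this low-mode analysis to be the main obstacle: one must show that the full linearized matching operator, augmented by the derivatives in the scaling parameters and in $X^{out}$, is surjective onto the target with a right inverse bounded independently of $\varepsilon$ after the natural rescalings.

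Granting the uniform right inverse, the nonlinear matching is closed by a contraction argument: writing the system as a fixed point equation on the space of gluing data, the estimates of Propositions~\ref{famigliabase} and~\ref{famigliamodello} show that the associated map is a contraction on a ball of radius of order $\rep^4$ inside $\mathcal{B}\st \kappa,\delta \dt$ for every $\varepsilon<\bar\varepsilon$. The resulting fixed point produces data for which the outer and inner potentials agree to fourth order across each overlap, hence glue to a global $\tilde T$-invariant function whose associated form is extremal on every piece and $C^{4,\alpha}$ across the gluing regions; elliptic regularity for the extremal equation then upgrades it to a smooth $\tilde T$-invariant extremal \K\ metric on $\tilde M$. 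The \K\ class is read off from the scaling parameters fixed in the constant-mode matching, yielding $\pi^*[\omega]+\sum_{l=1}^K\varepsilon^{2m-2}\tilde a_l^{2m-2}[\tilde\theta_l]+\sum_{j=1}^N\varepsilon^{2m}b_j[\tilde\eta_j]$ as claimed.
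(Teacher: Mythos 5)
Your proposal is correct and follows essentially the same route as the paper: the paper's proof consists exactly of combining Proposition \ref{famigliabase} (the family of extremal metrics on $M_{\rep}$), Proposition \ref{famigliamodello} (the families on the truncated models, whose formulation rests on the Hamiltonian potentials of Proposition \ref{potenzialiHamiltoniani}), and the standard Cauchy data matching of \cite{aps}, Section 10, which is precisely the mode-by-mode matching and contraction argument you describe. Your observation that the $\mathfrak{t}$-component of the matching is absorbed by adjusting $X^{out}$ rather than by a balancing condition is exactly the unobstructedness mechanism the paper invokes.
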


\begin{proof}[Proof of Theorem \ref{maintheoremestremale}]:  The result follows  combining Proposition \ref{famigliabase}, Proposition \ref{famigliamodello} and the standard procedure of data matching exposed in \cite[Section 10]{aps}.

\end{proof}

\subsection{The Kcsc case}

\noindent The final step is now to show a matching condition for  the metrics constructed above, and in particular why the quantities $\mathtt{c}_{0,j},\mathtt{c}_{2,j},\mathtt{c}_{3,j},\mathtt{c}_{5,j}$ do not interferee with the fixed point argument.

\begin{proof}[Proof of Theorem \ref{maintheorem}]: We focus on the case $m\geq 3$ since the proof for the case $m=2$ is exactly the same.  We denote  with $\mathcal{V}^{out}_{j,\ag,\bf{0},\cg,\hg,\kg}$ and  $\mathcal{V}^{out}_{l,\ag,\bf{0},\cg,\hg,\kg}$ the K\"ahler potentials of  $\omega_{\ag,\bf{0},\cg,\hg,\kg}$ on neighborhoods of points $p_{j}$ and $q_{l}$, with  $\mathcal{V}^{in}_{l,\tilde{a}_{l},\tilde{h}_{l},\tilde{k}_{l}}$ the K\"ahler potential of $\theta_{\tilde{a}_{l},\tilde{h}_{l},\tilde{k}_{l}}$ on $Y_{\Gamma_{N+l},\frac{\Rep}{\tilde{a_{l}}}}$ and with $\mathcal{V}^{in}_{j,\tilde{b}_{j},\tilde{h}_{j},\tilde{k}_{l}}$ the K\"ahler potential of $\eta_{\tilde{b}_{j},\tilde{h}_{j},\tilde{k}_{j}}$ on $X_{\Gamma_{j},\frac{\Rep}{\tilde{a_{j}}}}$.   It is possible, using  Propositions  \ref{crucialbaseq} and \ref{crucialmodelloUm}, to rescale, expand and decompose the potentials $\mathcal{V}^{out}_{j,\ag,\bf{0},\cg,\hg,\kg}$, $\mathcal{V}^{out}_{l,\ag,\bf{0},\cg,\hg,\kg}$, $\mathcal{V}^{in}_{l,\tilde{a}_{l},\tilde{h}_{l},\tilde{k}_{l}}$, $\mathcal{V}^{in}_{j,\tilde{b}_{j},\tilde{h}_{j},\tilde{k}_{l}}$ in the same way as done in \cite{alm}.

We start with the case   $\q\neq\emptyset$ and the``{\em tuning conditions}" are     
\begin{align}\label{eq:force1q}
 e\st \Gamma_{l} \dt  \tilde{a}_{l}^{2m-2}\varepsilon^{2}\Rep^{4-2m}=&  \st 1+\frac{\st f_{\ag,{\bf 0},\cg,\hg,\kg}^{out}\dt^{l}}{\varepsilon^{2m-2}} \dt e\st \Gamma_{l} \dt \bar{a}_{l}\varepsilon^{2m-2}\rep^{4-2m}-\frac{k_{l}^{(0)}}{4m-8}
\end{align}

\begin{align}\label{eq:force2q}
 C_{j}\varepsilon^{2m-2}\rep^{4-2m}=\frac{k_{j}^{(0)}}{4m-8}
\end{align}

where the quantities  $\st f_{\ag,{\bf 0},\cg,\hg,\kg}^{out}\dt^{l}$,  introduced in Section \ref{linearanalysis} , are the structural  coefficients  of $\sum_{l=1}^{K}\st f_{\ag,{\bf 0},\cg,\hg,\kg}^{out}\dt^{l}W_{\bar{\ag}}^{l}$
that is  the projection of $f_{\ag,{\bf 0},\cg,\hg,\kg}^{out}$ on the deficiency space $\mathcal{D}_{\q}\st\bar{\ag}\dt$. The ``{\em tuning conditions}" become hence
\begin{align}
  \tilde{a}_{l}^{2m-2}=&  \st 1+\frac{\st f_{\ag,{\bf 0},\cg,\hg,\kg}^{out}\dt^{l}}{\varepsilon^{2m-2}} \dt  \bar{a}_{l}-\frac{k_{l}^{(0)}\rep^{2m-4}}{\st 4m-8\dt e\st \Gamma_{l} \dt  \varepsilon^{2m-2}}
\end{align}
\begin{align}
 C_{j}=\frac{k_{j}^{(0)}\varepsilon^{2-2m}\rep^{2m-4}}{4m-8}
\end{align}

Now these conditions allow us to proceed exactly as in \cite[Subection 5.3]{ap2}  and  the proof of \ref{maintheorem} in the case $\q\neq \emptyset$ is complete. 

\medskip

\noindent Let now $\q=\emptyset$. Also in this case it is possible, using Propositions \ref{crucialbase}  and \ref{crucialmodello}, to rescale, expand and decompose the potentials $\mathcal{V}^{out}_{j,\bf{0},\bg,\cg,\hg,\kg}$, $\mathcal{V}^{in}_{j,\tilde{b}_{j},\tilde{h}_{j},\tilde{k}_{l}}$. The ``{\em tuning conditions}" we must impose are the following. 

\begin{align}\label{eq:merda1}
\cgaj  \tilde{b}_{j}^{2m}\varepsilon^{2}\Rep^{2-2m}=&\st 1-\frac{\st f_{{\bf 0},\bg,\cg,\hg,\kg}^{out}\dt^{j}}{\varepsilon^{2m}} \dt\cgaj  B_{j}^{2m}\varepsilon^{2m}\rep^{2-2m}+\st h_{j}^{(0)}+\frac{k_{j}^{(0)}}{4m-8}\dt \\
\end{align}

\begin{align}\label{eq:merda2}
- \st 1-\frac{\st f_{{\bf 0},\bg,\cg,\hg,\kg}^{out}\dt^{j}}{\varepsilon^{2m}} \dt C_{j}\varepsilon^{2m}\rep^{4-2m}-\frac{k_{j}^{(0)}}{4m-8}=&-\frac{  \cgaj \st\mathtt{c}_{4,j}+\tilde{b}_{j}\varepsilon\mathtt{c}_{5,j} \dt\varepsilon^{4}\tilde{b}^{2m}\Rep^{4-2m}}{8 \st m-2 \dt  \st m-1 \dt }\\
&-\frac{\cgaj\mathtt{c}_{0} \tilde{k}_{j}^{(0)}\tilde{b}_{j}^2}{32\st m-2 \dt\st m-1 \dt m \Rep^2}\\
&-\frac{\cgaj\mathtt{c}_{2}\tilde{b}_{j}^{2}}{\st m-2 \dt\st m-1 \dt\Rep^2}\st \tilde{h}_{j}^{(2)}-\frac{\tilde{k}_{j}^{(2)}}{4(m+2)} \dt \\
&-\frac{\cgaj\mathtt{c}_{3}\tilde{b}_{j}^{3}}{\st m-2 \dt\st m-1 \dt\Rep^3}\st \tilde{h}_{j}^{(3)}-\frac{\tilde{k}_{j}^{(3)}}{4(m+3)}  \dt\\
\end{align}

where the quantities  $\st f_{{\bf 0},\bg,\cg,\hg,\kg}^{out}\dt^{j}$, introduced in Section \ref{linearanalysis} , are the structural  coefficients  of $\sum_{j=1}^{N}\st f_{{\bf 0},\bg,\cg,\hg,\kg}^{out}\dt^{l}W_{\bg,\cg}^{j}$
that is  the projection of $f_{{\bf 0},\bg,\cg,\hg,\kg}^{out}$ on the deficiency space $\mathcal{D}_{\p}\st\bg,\cg\dt$. First we set 
\begin{equation}\label{eq:sceltaBj}
\tilde{b}_{j}^{2m}=B_{j}^{2m}\st 1-\frac{\st f_{{\bf 0},\bg,\cg,\hg,\kg}^{out}\dt^{j}}{\varepsilon^{2m}} \dt+\frac{1}{ \cgaj  }\st h_{j}^{(0)}+\frac{k_{j}^{(0)}}{4m-8}\dt\frac{\rep^{2m-2}}{\varepsilon^{2m}}
\end{equation}
 then we solve equation \eqref{eq:merda2} with respect to $C_{j}$ and  hence we determine the remaining tuning parameters. Letting $\varepsilon$ tend to $0$ we find, looking at the choices of tuning parameters,  the correct relation intertwining coefficients $\bg$ and $\cg$ in the {\em balancing condition} that is 
\begin{align}\label{eq:merda3}
 c_{j} =&  b_{j}\sq \frac{1}{m}s_{\omega}\st 1+\frac{\st m-1 \dt^{2}}{\st m+1 \dt} \dt -\frac{\mathtt{c}_{4,j}}{ 2  \st m-1 \dt|\Sp^{2m-1}|} \dq
\end{align}

Now, the proof of Theorem \ref{maintheorem} when $\q=\emptyset$ follows  arguing  as in   \cite[Subsection 6.2]{alm}. Indeed, after observing that the terms containing $\mathtt{c}_{0,j},\mathtt{c}_{2,j},\mathtt{c}_{3,j},\mathtt{c}_{5,j}$ have the $\varepsilon$-growth that is bigger than the $\varepsilon$-growth of {\em pseudo boundary data} and hence those terms  are dominated by the {\em pseudo boundary data}, one can use the argument of data matching exposed in \cite[ Section 6]{alm}.

\end{proof}

\providecommand{\bysame}{\leavevmode\hbox to3em{\hrulefill}\thinspace}
\providecommand{\MR}{\relax\ifhmode\unskip\space\fi MR }
\providecommand{\MRhref}[2]{%
  \href{http://www.ams.org/mathscinet-getitem?mr=#1}{#2}
}
\providecommand{\href}[2]{#2}

\end{document}